\documentclass[11pt,reqno,a4paper]{amsart}
\usepackage{amsfonts}
\usepackage{float}
\usepackage{amsmath,amssymb}
\usepackage{thmtools}
\usepackage{thm-restate}
\usepackage{lscape}
\usepackage[dvipdfmx]{graphicx, color}
\usepackage[all]{xy}

\usepackage[
  top=30mm,
  bottom=30mm,
  left=30mm,
  right=30mm
]{geometry}

\def\qed{\hfill $\Box$}
\def\proof{\noindent {\sl Proof} :\;  }

\newcommand{\A}{\mathcal{A}}
\newcommand{\B}{\mathcal{B}}

\newcommand{\D}{\mathcal{D}}
\newcommand{\E}{\mathcal{E}}
\newcommand{\F}{\mathcal{F}}

\newcommand{\R}{\mathbb{R}}

\newcommand\rank{\mbox{\rm rank}\,}

\newcommand\Imag{\mbox{\rm Im} }

\newcommand\tr{\mbox{\rm tr} }

\def\qed{\unskip\nobreak\hfill$\Box$}
\def\proof{\noindent {\sl Proof}.\;  }

\def\rd{\partial}

\def\bx{\mbox{\boldmath $x$}}

\newcommand{\til}[1]{\widetilde{#1}}
\newcommand{\ha}[1]{\widehat{#1}}
\newcommand{\barr}[1]{\overline{#1}}

\allowdisplaybreaks

\numberwithin{equation}{section}

\theoremstyle{definition}

\newtheorem{thm}{Theorem}[section]

\newtheorem{lem}[thm]{Lemma}
\newtheorem{prop}[thm]{Proposition}
\newtheorem{dfn}[thm]{Definition}

\newtheorem{rem}[thm]{Remark}
\newtheorem{exam}[thm]{Example}

\begin{document}
\title[Equiaffine immersion, projective flatness and quasi-Codazzi structure]
{Equiaffine immersion, projective flatness and quasi-Codazzi structure}
\author[K.~Kayo]{Kaito Kayo}
\address[K.~Kayo]{D2, Graduate School of Science, Hokkaido University,
Sapporo 060-0810, Japan}
\email{kayo.kaito.b9@elms.hokudai.ac.jp}
\date{\today}
%

%
%
\keywords{Codazzi structure, Statistical manifold, Para-complex geometry, Dually flat structure}
\dedicatory{}
\begin{center}
\end{center}
\begin{abstract}

In the present paper, we study an extended theory of statistical manifolds in application to affine differential geometry.
Any smooth hypersurface $M \subset \mathbb{R}^{n+1}$ with a transverse vector field $\xi$ naturally admits a symmetric $(0, 2)$-tensor $h$ and a torsion-free connection $\nabla$ on $M$ so that $\nabla h$ is totally symmetric.
Here $h$ may be degenerate (i.e., not a pseudo-Riemannian metric) in general.
As a generalization of classical theorem due to Weyl, Radon, Nomizu, Kurose and others, we show, roughly saying, that $M$ with $\xi$ is {\em equiaffine} if and only if $(h, \nabla)$ defines a {\em quasi-Codazzi structure}, previously introduced by the author, and it admits a {\em projectively flat} dual connection with symmetric Ricci contraction.
This is a direct consequence from our quasi-Codazzi theory, which is built in a more general context as a submanifold theory in para-Hermitian geometry.

\keywords{First keyword \and Second keyword \and More}
\end{abstract}

\maketitle

{\normalsize
\setcounter{tocdepth}{2}
}


\section{Introduction}
\label{Intro}

The theory of statistical manifolds has been well examined within the framework of affine differential geometry (see e.g., \cite{Matsuzoe, Nomizu-Sasaki}).
To begin with, we pick up a typical topic on affine immersions.

Any immersion $f : M \rightarrow \R^{n+1}$ from an $n$-dimensional manifold $M$ equipped with a transversal vector field $\xi$ along $f$ naturally admits a symmetric $(0, 2)$-tensor field $h$ and a torsion-free affine connection $\nabla$ on $M$ such that $C := \nabla h$ is a totally symmetric cubic tensor.
We call $h$ the {\em second fundamental form} and $\nabla$ the {\em induced connection}.
Note that $h$ may be degenerate (i.e., $h$ is not a pseudo-Riemannian metric).
Conversely, given a torsion-free affine connection $\nabla$ and a symmetric $(0, 2)$-tensor field $h$ on $M$ with totally symmetric $\nabla h$, it is an important and interesting problem to determine the conditions under which there exist a pair $\{f, \xi\}$ of immersion $f : M \rightarrow \R^{n+1}$ and transversal vector field $\xi$ inducing $\nabla$ and $h$.
In particular, in case that $h$ is non-degenerate, this structure $(h,\nabla)$ is called a statistical structure (or a Codazzi structure), and the corresponding problem has been extensively studied for a long time.
In 1918, Radon investigated necessary and sufficient conditions for a two-dimensional manifold to be realized as an equiaffine immersion~\cite{Radon}. Later, in 1990, Dillen--Nomizu--Vranken~\cite{DNV} extended it to higher dimensions by incorporating a dual connection $\nabla^{*}$ (we will precisely state their result as Theorem~\ref{thm:nondegenerate and equiaffine <=> Codazzi structure} later).

Our aim of the present paper is to generalize this classical theorem to the setting where the non-degeneracy condition of $h$ is dropped.
That is, we extract equiaffineness of $\{f, \xi\}$ solely and characterize it as third order geometry. In our previous paper \cite{Kayo}, we have introduced a novel concept of a {\em quasi-Codazzi structure} generalizing the Codazzi structure by allowing the underlying metric $h$ to be degenerate. A main point of the present paper is to extend the notion of {\em projective flatness} (\cite{Eisenhart, Kobayashi-Nomizu, Weyl}) to our quasi-Codazzi setting, and then, by using this, we show that $\{f, \xi\}$ is an equiaffine immersion if and only if it induces a quasi-Codazzi structure with projectively flat connection $\nabla^{-}$ (an alternative to $\nabla^{*}$) and symmetric Ricci contraction, provided $M$ is simply connected.
See Theorems~\ref{main_thm_1} and \ref{main_thm_2} for precise statements.

The author's quasi-Codazzi theory \cite{Kayo} is about {\em geometry of the cubic tensor $C$ regardless of the existence of the pseudo-Riemannian metric $h$} and it fits the theory of weak contrast functions in the sense of Eguchi and Matumoto~\cite{Eguchi, Matumoto}.
Due to the degeneration of $h$, the connection $\nabla$ and its dual $\nabla^{*}$ may no longer exist, so some new innovation is needed to provide substitutes.
In our theory, those are affine connections $\nabla^{+}$ and $\nabla^{-}$ of {\em coherent tangent bundles} $E^{+}$ and $E^{-}$, respectively (substitutes for the (co)tangent bundle $TM$ and $T^{*}M$).
Then a symplectic vector bundle $E = E^{+} \oplus E^{-}$ equipped with a para-Hermitian structure (substitute for $T(T^{*}M) = TM \oplus T^{*}M$) naturally appears in our definition of quasi-Codazzi structure (Definition~\ref{dfn:quasi-Codazzi structure}).
Our quasi-Codazzi theory would be a relevant generalization of statistical manifolds with broad applications and promising potential; indeed, it should be understood as a sort of {\em differential geometry of Lagrange subbundles in para-Hermitian symplectic vector bundles or submanifold theory in para-Hermitian geometry} (Example~\ref{exam:symple_example}).

The rest of the present paper is organized as follows.
In Section~\ref{sec:2}, we review the definition and properties of quasi-Codazzi structures.
In Section~\ref{sec:3}, we generalize the notion of projective flatness to coherent tangent bundles and investigate the necessary and sufficient conditions for projective flatness.
In Section~\ref{sec:4}, we show that an equiaffine immersion induces a quasi-Codazzi structure.
Moreover, we prove that, conversely, given a quasi-Codazzi structure, projective flatness is involved in the conditions for the existence of an equiaffine immersion inducing the structure.

In this paper, we let $M$ be an $n$-dimensional manifold.
For a connection $\nabla$ on a vector bundle, we denote its curvature tensor field by $R^{\nabla}$, and we use the same notation for connections on the tangent bundle $TM$.
In the case where $\nabla$ is an affine connection on the tangent bundle $TM$, we denote its Ricci tensor field by $Ric^{\nabla}$.
We adopt the Einstein summation convention.
Unless otherwise stated, all geometric objects considered in this paper are assumed to be of class $C^{\infty}$.

\vskip\baselineskip
\vskip\baselineskip
\vskip\baselineskip
\section{Quasi-Codazzi structure}
\label{sec:2}


First, we recall the definition of Codazzi structure.

\begin{dfn}
  [cf., \cite{Lauritzen, Matsuzoe}]
  Let $h$ be a pseudo-Riemannian metric and $\nabla$ a connection on $M$.
  A dual connection $\nabla^{*}$ of $\nabla$ with respect to $h$ is defined by
    \begin{align}
      Xh(Y,Z) = h(\nabla_{X}Y,Z) + h(Y,\nabla^{*}_{X}Z)
      \label{def of dual connection}
    \end{align}
  for $X,Y,Z \in \Gamma(TM)$.
  We call a triplet $(h,\nabla,\nabla^{*})$ a {\em statistical structure} or a {\em Codazzi structure} on $M$ if $\nabla$ and $\nabla^{*}$ are torsion-free.
  Since $h$ is nondegenerate, a dual connection exists uniquely.
  In a special case where $\nabla$ and $\nabla^{*}$ are flat, we call it a {\em dually flat} or {\em Hessian structure}.
\end{dfn}

If the metric is degenerate, this geometric structure is not applicable, as the dual connection does not exist (\cite[Proposition~2.5]{Kayo}).
In this section, we briefly review quasi-Codazzi structures introduced in \cite{Kayo} and their properties.

\begin{dfn}
  [\cite{SUY-F1}]
  Let $\E$ be a rank $n$ vector bundle over $M$, $\nabla^{\E}$ a connection on $\E$, and $\phi:TM \rightarrow \E$ a bundle map over $M$.
  We say that $\nabla^{\E}$ is {\em relatively torsion-free} with respect to $\phi$ if
  \begin{align}
  \nabla^{\E}_{X}\phi(Y) - \nabla^{\E}_{Y}\phi(X) - \phi([X,Y])= 0
  \label{dfn:relatively torsion-free}
  \end{align}
  holds for any $X,Y \in \Gamma(TM)$.
  Moreover, the triplet $(\E,\phi,\nabla^{\E})$ is called a {\em coherent tangent bundle} if $\nabla^{\E}$ is relatively torsion-free with respect to $\phi$.
\end{dfn}

The notion of coherent tangent bundle in \cite{SUY-F1} is considered for the triplet $(\E,\phi,\nabla^{\E})$ with a metric of $\E$.
Note that a bundle map means a fiberwise linear map, i.e., $\phi_{p}:T_{p}M \to \E_{p}$ is linear for every $p \in M$, so the rank may vary according to points $p$.
If it is isomorphic fiberwise, we call it a bundle isomorphism.

\begin{dfn}[\cite{LS05}]
  \label{dfn:para-Hermitian vector bundle}
  Let $E$ be a rank $2n$ vector bundle on $M$ and $I:E\rightarrow E$ a bundle isomorphism.
  Suppose that $\theta \in \Gamma (E^{*} \otimes E^{*})$ is symmetric and nondegenerate.
  We call the triplet $(E,\theta,I)$ a {\em para-Hermitian vector bundle} on $M$ if the following conditions hold:
  \begin{itemize}
    \item[(i)]
    $I^{2}=id$ and $I \neq id$;
    \item[(ii)]
    The eigenvalues of $I$ are $\pm 1$.
    Let $E^{+}$ (resp. $E^{-}$) be the subbundle of $E$ consisting of eigenvectors with the eigenvalue $1$ (resp. $-1$).
    Then $E^{+}$ and $E^{-}$ have the same rank and  $E=E^{+} \oplus E^{-}$ holds;
    \item[(iii)]
    For $\eta,\zeta\in\Gamma(E)$, $\theta(I\eta,\zeta)+\theta(\eta,I\zeta)=0$ holds.
  \end{itemize}
\end{dfn}

\begin{dfn}[\cite{Kayo}]
  \label{dfn:dual connection}
  Let $(E,\theta, I)$ be a para-Hermitian vector bundle on $M$.
  Set a connection $\nabla^{+}$ on $E^{+}$ and a connection $\nabla^{-}$ on $E^{-}$.
  These connections are called {\em mutually dual} if, for $X\in \Gamma(TM)$, $\eta^{+}\in \Gamma(E^{+})$, and $\eta^{-}\in \Gamma(E^{-})$, it holds that
  \[
  X\theta(\eta^{+},\eta^{-}) = \theta(\nabla^{+}_{X}\eta^{+},\eta^{-}) +  \theta(\eta^{+},\nabla^{-}_{X}\eta^{-}).
  \]
  The connection $\nabla^{+}$ (resp. $\nabla^{-}$) is called the {\em dual} of $\nabla^{-}$ (resp. $\nabla^{+}$).
\end{dfn}

A para-Hermitian vector bundle $(E,\theta, I)$ canonically admits a symplectic form $\omega$ on $E$ defined by
\begin{align}
  \label{dfn:symplectic_form}
  \omega(\eta,\zeta):=\theta(\eta,I\zeta)
\end{align}
for $\eta,\zeta \in \Gamma(E)$.
We consider a bundle map $\Phi:TM \rightarrow E$ such that, for every $p\in M$, $\Imag \Phi_{p} =\Phi_{p}(T_{p}M)$ is a Lagrange subspace of $E_{p}$, i.e., $\omega$ vanishes on $\Imag \Phi_{p}$ and $\rank\Phi_{p} = n$ holds.
For this bundle map $\Phi$, we have two bundle maps $\Phi^{+}:=\pi^{+}\circ\Phi$ and $\Phi^{-}:=\pi^{-}\circ\Phi$, where $\pi^{+}:E\rightarrow E^{+}$ and $\pi^{-}:E\rightarrow E^{-}$ are natural projections.
Then we can write $\Phi = \Phi^{+}\oplus\Phi^{-}$.
Accordingly, the pullback $h:=\Phi^{*}\theta \in \Gamma (T^{*}M \otimes T^{*}M)$ is given by
\begin{align}
h(X,Y) = \theta(\Phi(X),\Phi(Y)) = 2\theta(\Phi^{+}(X),\Phi^{-}(Y)) \label{def:degenerate metric}
\end{align}
for $X,Y\in \Gamma(TM)$.

\begin{dfn}[\cite{Kayo}]
  \label{dfn:quasi-Codazzi structure}
  We call $(h,(E,\theta,I),\Phi,\nabla^{+},\nabla^{-})$ a {\em quasi-Codazzi structure} on $M$ if $\nabla^{+}$ and $\nabla^{-}$ are relatively torsion-free with respect to $\Phi^{+}$ and $\Phi^{-}$, respectively.

  In particular, a quasi-Codazzi structure $(h,(E,\theta,I),\Phi,\nabla^{+},\nabla^{-})$ is a {\em quasi-Hessian structure} introduced by Nakajima--Ohmoto~\cite{Nakajima-Ohmoto} if and only if $\nabla^{+}$ and $\nabla^{-}$ are mutually flat connections.
\end{dfn}

\begin{exam}
  [\cite{Kayo}]
  \label{exam:symple_example}
  We give a simple example.
  Let $N$ be a $2n$-dimensional manifold equipped with a para-Hermitian vector bundle $(TN = E^{+}\oplus E^{-},\theta, I)$, and let $\nabla = \nabla^{+}\oplus \nabla^{-}$ be an invariant torsion-free connection with respect to $I$.
  Then $(N,\omega)$ is a symplectic manifold, where $\omega$ is defined by \eqref{dfn:symplectic_form}.
  If $M \subset N$ is a Lagrangian submanifold, we obtain a quasi-Codazzi structure on $M$.
  Indeed, we set $\Phi := \iota_{*}$ and $h := \Phi^{*}\theta$, where $\iota:M \rightarrow N$ is the inclusion map.
  Then $(h,(TN,\theta,I),\Phi, \nabla^{+},\nabla^{-})$ is a quasi-Codazzi structure on $M$.
\end{exam}

\begin{prop}
  [\cite{Kayo}]
  \label{prop of quasi-Codazzi str}
  Let $(E,\theta, I)$ be a para-Hermitian vector bundle on $M$.
  Set a connection $\nabla^{+}$ on $E^{+}$ and a connection $\nabla^{-}$ on $E^{-}$.
  We define $C \in \Gamma(T^{*}M \otimes T^{*}M \otimes T^{*}M)$ by
  \begin{align}
    C(X,Y,Z) = &-2\{ \theta(\nabla^{+}_{X}\Phi^{+}(Y),\Phi^{-}(Z)) - \theta(\Phi^{+}(Z), \nabla^{-}_{X}\Phi^{-}(Y)) \}
    \label{dfn:cubic tensor of quasi-Codazzi}
  \end{align}
  for $X,Y,Z\in \Gamma(TM)$.
  If $\Phi^{+}$ is a bundle isomorphism and $\nabla^{+}$ is relatively torsion-free with respect to $\Phi^{+}$, then $C$ coincides with $\nabla h$, where $\nabla$ is a torsion-free connection on $M$ defined by $\Phi^{+}(\nabla_{X}Y) = \nabla^{+}_{X}\Phi^{+}(Y)$, and the following are equivalent:
  \begin{itemize}
    \item[(i)]
      $\nabla^{-}$ is relatively torsion-free with respect to $\Phi^{-}$;
    \item[(ii)]
      $C$ is totally symmetric.
  \end{itemize}
\end{prop}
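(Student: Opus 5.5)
The plan is a direct computation. I assume, as in the setting of \cite{Kayo}, that $\Phi$ has Lagrangian image and that $\nabla^{+}$ and $\nabla^{-}$ are mutually dual in the sense of Definition~\ref{dfn:dual connection}. The statement as written does not list duality among its hypotheses, but without it $Xh(Y,Z)$ cannot be expanded in terms of $\nabla^{\pm}$. The steps are: record two algebraic facts about $\theta$, show $C=\nabla h$, and then reduce total symmetry of $C$ to the vanishing of the relative torsion of $\nabla^{-}$.

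\emph{Algebraic preliminaries.} Apply condition (iii) of Definition~\ref{dfn:para-Hermitian vector bundle} to $\eta,\zeta\in E^{+}$. This gives $2\theta(\eta,\zeta)=0$, and similarly on $E^{-}$. So $\theta$ vanishes on $E^{+}\times E^{+}$ and on $E^{-}\times E^{-}$. Since $\theta$ is nondegenerate, it therefore restricts to a nondegenerate pairing $E^{+}\times E^{-}\to\R$. Next, write $\Phi=\Phi^{+}\oplus\Phi^{-}$ and expand the Lagrangian condition $\omega(\Phi(Y),\Phi(Z))=0$ using \eqref{dfn:symplectic_form}:
\[
-\theta(\Phi^{+}(Y),\Phi^{-}(Z))+\theta(\Phi^{-}(Y),\Phi^{+}(Z))=0 .
\]
This says exactly that $h$ is symmetric, i.e.\ $\theta(\Phi^{+}(Y),\Phi^{-}(Z))=\theta(\Phi^{+}(Z),\Phi^{-}(Y))$.

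\emph{$C=\nabla h$.} Expand $(\nabla_X h)(Y,Z)=Xh(Y,Z)-h(\nabla_XY,Z)-h(Y,\nabla_XZ)$ using $h(Y,Z)=2\theta(\Phi^{+}(Y),\Phi^{-}(Z))$, the duality rule, and the definition $\Phi^{+}(\nabla_XY)=\nabla^{+}_X\Phi^{+}(Y)$. For the last term, first use symmetry of $h$ to write $h(Y,\nabla_XZ)=h(\nabla_XZ,Y)$. This yields
\[
(\nabla_Xh)(Y,Z)=2\theta(\Phi^{+}(Y),\nabla^{-}_X\Phi^{-}(Z))-2\theta(\nabla^{+}_X\Phi^{+}(Z),\Phi^{-}(Y)),
\]
which is precisely $C(X,Z,Y)$. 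Since $h$ is symmetric, $\nabla h$ is symmetric in its last two slots, so $C(X,Z,Y)=(\nabla_Xh)(Z,Y)=(\nabla_Xh)(Y,Z)$. Hence $C=\nabla h$, and in particular $C$ is symmetric in $(Y,Z)$. The torsion-freeness of $\nabla$ follows immediately from the relative torsion-freeness of $\nabla^{+}$ and the injectivity of $\Phi^{+}$.

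\emph{Equivalence.} Because $C$ is already symmetric in $(Y,Z)$, total symmetry is equivalent to $C(X,Y,Z)=C(Y,X,Z)$. Compute the difference $C(X,Y,Z)-C(Y,X,Z)$ as follows.
\begin{itemize}
\item[(a)] Replace $\nabla^{+}_X\Phi^{+}(Y)-\nabla^{+}_Y\Phi^{+}(X)$ by $\Phi^{+}([X,Y])$, using that $\nabla^{+}$ is relatively torsion-free.
\item[(b)] Rewrite the resulting term $\theta(\Phi^{+}([X,Y]),\Phi^{-}(Z))$ as $\theta(\Phi^{+}(Z),\Phi^{-}([X,Y]))$, using symmetry of $h$.
\end{itemize}
After these substitutions the difference becomes
\[
2\,\theta\bigl(\Phi^{+}(Z),\,\nabla^{-}_X\Phi^{-}(Y)-\nabla^{-}_Y\Phi^{-}(X)-\Phi^{-}([X,Y])\bigr).
\]
Since $\Phi^{+}$ is a bundle isomorphism, $\Phi^{+}(Z)$ ranges over all of $E^{+}$. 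Combined with the nondegenerate pairing $E^{+}\times E^{-}\to\R$, this shows the expression vanishes for all $Z$ if and only if the relative torsion of $\nabla^{-}$ with respect to $\Phi^{-}$ vanishes, which is (i)$\Leftrightarrow$(ii). The main point needing care is the bookkeeping of argument order and signs, together with the use of the Lagrangian condition (symmetry of $h$) at two places; the rest is routine.
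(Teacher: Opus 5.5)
Your proof is correct and matches the paper's approach: the paper imports this proposition from \cite{Kayo}, and wherever it re-verifies $C=\nabla h$ (in Section~\ref{sec:4-2} and in the proof of Theorem~\ref{main_thm_2}) it uses the same direct expansion via duality of $\nabla^{\pm}$ and the Lagrangian symmetry $\theta(\Phi^{+}(Y),\Phi^{-}(Z))=\theta(\Phi^{+}(Z),\Phi^{-}(Y))$. The only difference is bookkeeping: it expands $\theta(\Phi^{+}(Z),\nabla^{-}_X\Phi^{-}(Y))$ to reach $C(X,Y,Z)=(\nabla_Xh)(Y,Z)$ directly, without your extra swap. You are also right that mutual duality has to be assumed tacitly, as the paper itself does.
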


\begin{dfn}[\cite{Kayo}]\label{def of isomorphic}
  Let $(h_{i},(E_{i}=E_{i}^{+}\oplus E_{i}^{-}, \theta_{i}, I_{i}), \Phi_{i}=\Phi_{i}^{+}\oplus \Phi_{i}^{-}, \nabla_{i}^{+}, \nabla_{i}^{-} )$ be quasi-Codazzi structures for $i=1,2$.
  These structures are {\it isomorphic} if there exists a bundle isomorphism $\F:E_{1} \rightarrow E_{2}$ on $M$ that satisfies
  \begin{itemize}
    \item [(i)]
    $\F \circ \Phi_{1} = \Phi_{2}$;
    \item[(ii)]
    $\F \circ I_{1} = I_{2}  \circ \F$;
    \item[(iii)]
    $\theta_{1}(\eta,\zeta) = \theta_{2}(\F(\eta),\F(\zeta))$ for $\eta,\zeta \in \Gamma(E_{1})$;
    \item[(iv)]
    $\F(\nabla^{+}_{1\;X}\eta^{+}) = \nabla^{+}_{2\;X}\F(\eta^{+})$ for $\eta^{+}\in \Gamma(E_{1}^{+})$, and $\F(\nabla^{-}_{1\;X}\eta^{-}) = \nabla^{-}_{2\;X}\F(\eta^{-})$ for $\eta^{-}\in \Gamma(E_{1}^{-})$.
  \end{itemize}
\end{dfn}

Let $(h,(E,\theta,I),\Phi,\nabla^{+},\nabla^{-})$ be a quasi-Codazzi structure.
If $h$ is nondegenerate, $\Phi^{+}$ and $\Phi^{-}$ are isomorphisms by \eqref{def:degenerate metric}.
Hence, we can define two affine connections $\nabla$ and $\nabla^{*}$ on $M$ by
\[
\Phi^{+}(\nabla_{X}Y) = \nabla^{+}_{X}\Phi^{+}(Y), \quad \Phi^{-}(\nabla^{*}_{X}Y) = \nabla^{-}_{X}\Phi^{-}(Y)
\]
for $X,Y\in\Gamma(TM)$.
We can easily check that these connections are torsion-free and satisfy \eqref{def of dual connection}.
Then the triplet $(h,\nabla,\nabla^{*})$ is a Codazzi structure.
A Codazzi structure can be regarded as a quasi-Codazzi structure with non-degenerate $h$ in a canonical way, see \cite[Example 3.22]{Kayo}.

We introduce the definition of weak contrast functions and their relation to quasi-Codazzi structures.
For details, see \cite{Eguchi, Kayo}.
Let $\rho:U \rightarrow \R$ be a function defined on an open neighborhood $U$ of the diagonal $\Delta_{M} \subset M\times M$.
A function $\rho[X_{1} \dots X_{k}|Y_{1} \dots Y_{l}]$ is defined by
\[
(X_{1})_{p} \dots (X_{k})_{p} (Y_{1})_{q} \dots (Y_{l})_{q} (\rho(p,q)) \big|_{p=q=r}
\]
for every $r \in M$ and $X_{i},Y_{j}\in \Gamma(TM)$ $(1 \leq i \leq k, 1 \leq j \leq l)$.
We also define $\rho[X|-](r) = X_{p}\rho(p,q)\big|_{p=q=r}$ and $\rho[-|X](r) = X_{q}\rho(p,q)\big|_{p=q=r}$ for $X\in\Gamma(TM)$.
We call $\rho$ a {\it contrast function} if, for $X,Y\in\Gamma(TM)$, it satisfies the following conditions:
\[
\text{(i)}\ \rho[-|-] = \rho(p,p) = 0; \qquad
\text{(ii)}\ \rho[X|-] = \rho[-|X]=0;
\]
\begin{center}
  (iii) A symmetric $(0,2)$-tensor field $h(X,Y):=-\rho[X|Y]$ on $M$ is nondegenerate.
\end{center}
If it satisfies only (i) and (ii), $\rho$ is called a {\it weak contrast function}.

It is well known that a contrast function uniquely induces a Codazzi structure (see \cite{Eguchi}).
The following theorem shows the converse.

\begin{thm}
  [\cite{Matumoto}]
  \label{thm:weak contrast function}
  Given a symmetric $(0,2)$-tensor field $h$, i.e., a possibly degenerate metric, and a totally symmetric cubic tensor field $C$, we get a weak contrast function $\rho$ such that
  \begin{align*}
    h(X,Y) &=-\rho[X|Y] \; (=\rho[XY|-] = \rho[-|XY]), \\
    C(X,Y,Z) &=-\rho[Z|XY] + \rho[XY|Z]
  \end{align*}
  holds for $X,Y,Z\in\Gamma(TM)$.
  In particular, each quasi-Codazzi structure (resp. Codazzi structure) admits a weak contrast function (resp. a contrast function).
\end{thm}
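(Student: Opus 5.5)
We will construct $\rho$ explicitly in local coordinates and then patch the local pieces together with a partition of unity. The point that makes patching possible is that conditions (i), (ii) and the two identities in the statement are all linear in $\rho$, and they only involve the jet of $\rho$ along $\Delta_M$ up to order three. Suppose $\rho_\alpha$ satisfy everything on $U_\alpha\times U_\alpha$, and let $\{\psi_\alpha\}$ be a partition of unity on $M$ subordinate to $\{U_\alpha\}$. Put $\rho=\sum_\alpha \psi_\alpha(p)\rho_\alpha(p,q)$, defined on a neighbourhood of $\Delta_M$. When we differentiate, every term in which a derivative falls on $\psi_\alpha$ contains a lower-order jet of $\rho_\alpha$ on the diagonal. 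These jets are $\rho_\alpha[-|-]=0$, $\rho_\alpha[X|-]=\rho_\alpha[-|X]=0$ and the second-order values, which are given by $h$ and do not depend on $\alpha$. We then need only check that these cross terms cancel. A cleaner way to ensure this is to build $\rho$ globally, which is what we do next.

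\textbf{Global construction.} Fix an auxiliary torsion-free connection $D$ on $M$, for instance the Levi-Civita connection of some Riemannian metric. Its exponential map gives a diffeomorphism from a neighbourhood of the zero section to a neighbourhood $U$ of $\Delta_M$, $(p,q)\mapsto v=\exp_q^{-1}(p)\in T_qM$. Define
\[
\rho(p,q)=\tfrac12 h_q(v,v)+\tfrac16\,\tilde C_q(v,v,v),
\]
where $\tilde C$ is a totally symmetric cubic tensor to be chosen, possibly as $C$ plus a correction built from $D h$. Then $\rho(p,p)=0$, and the first derivatives vanish on the diagonal because $\rho$ is at least quadratic in $v$. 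So $\rho$ is a weak contrast function.

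\textbf{Computing the jets.} Differentiating in $p$ with $q$ fixed gives $\rho[XY|-]=h(X,Y)$, since $v$ is a normal coordinate centred at $q$. The identities $\rho[-|XY]=-\rho[X|Y]=\rho[XY|-]$ hold for any weak contrast function. They follow by differentiating $\rho(p,p)=0$ twice and using $\rho[X|-]=\rho[-|X]=0$ identically along the diagonal, so $h=-\rho[X|Y]$ as required.

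For the third-order jets, differentiating $\rho(p,p)=0$ three times gives the relation
\[
\rho[XYZ|-]+\rho[XY|Z]+\rho[XZ|Y]+\rho[YZ|X]+\dots+\rho[-|XYZ]=0,
\]
together with the analogous relations obtained from differentiating $\rho[X|-]=0$ and $\rho[-|X]=0$ along the diagonal. These let us express $\rho[XY|Z]-\rho[Z|XY]$ in terms of $\rho[XYZ|-]$ and derivatives of $h$. Since $\rho[XYZ|-]=\tilde C(X,Y,Z)$ up to $D$-Christoffel terms, we obtain a formula of the form $-\rho[Z|XY]+\rho[XY|Z]=\tilde C(X,Y,Z)+L(Dh)(X,Y,Z)$, where $L$ is an explicit linear expression. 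We then set $\tilde C:=C-L(Dh)$, symmetrized. This step is the main obstacle. We must verify that $L(Dh)$ is totally symmetric, or at least that the antisymmetric parts cancel, so that a totally symmetric $\tilde C$ reproduces the prescribed $C$. A safer fallback is to allow $\tilde C$ to be an arbitrary cubic form, not necessarily symmetric, and solve the linear equation for it directly. Here the total symmetry of $C$ is what makes the equation consistent with the symmetries forced on the third-order jet by $\rho(p,p)=0$.

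\textbf{The last sentence of the theorem.} A quasi-Codazzi structure gives $h$ together with its cubic tensor $C$ in \eqref{dfn:cubic tensor of quasi-Codazzi}, which is totally symmetric by Proposition~\ref{prop of quasi-Codazzi str}, at least where $\Phi^{+}$ is an isomorphism. In general we would use the equivalent symmetry statement from \cite{Kayo}. The construction then applies. If $h$ is nondegenerate, condition (iii) also holds, so $\rho$ is a contrast function.
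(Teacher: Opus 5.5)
The paper gives no proof of its own here; it quotes Matumoto. So the question is whether your construction is complete by itself. Your handling of the first- and second-order jets is correct. The gap is the third-order step, which is the actual content of the theorem: it is never carried out, and what you put in its place does not work.

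\begin{itemize}
\item You assert $-\rho[Z|XY]+\rho[XY|Z]=\tilde C+L(Dh)$ without computing it. In fact the coefficient of $\tilde C$ is $-2$, not $1$, so the choice $\tilde C:=C-L(Dh)$ would be wrong.
\item You leave the symmetry of $L(Dh)$ open as ``the main obstacle''.
\item Your fallback of allowing a non-symmetric $\tilde C$ is vacuous. $\tilde C_q(v,v,v)$ depends only on the symmetrization of $\tilde C$, so it adds no freedom.
\end{itemize}
As written, you have not shown that some $\tilde C$ reproduces $C$.

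The gap closes with a short chart computation. Put $u=x(p)-x(q)$ and let $\Gamma$ be the Christoffel symbols of $D$. The geodesic equation gives $v=u+\tfrac12\Gamma(q)(u,u)+O(|u|^3)$, so
\[
\rho=\tfrac12 h_{ij}(q)u^iu^j+\tfrac16K_{ijk}(q)u^iu^ju^k+O(|u|^4),\qquad K_{ijk}=\tilde C_{ijk}+h_{ai}\Gamma^a_{jk}+h_{aj}\Gamma^a_{ki}+h_{ak}\Gamma^a_{ij}.
\]
For any expansion of this shape, $\rho[\partial_i\partial_j|\partial_k]=\partial_kh_{ij}-K_{ijk}$ and $\rho[\partial_k|\partial_i\partial_j]=K_{ijk}-\partial_ih_{jk}-\partial_jh_{ki}$. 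Hence
\[
-\rho[Z|XY]+\rho[XY|Z]=(D_Xh)(Y,Z)+(D_Yh)(Z,X)+(D_Zh)(X,Y)-2\tilde C(X,Y,Z)
\]
holds for coordinate fields. It then holds for all $X,Y,Z$, because the left side is $C^\infty(M)$-trilinear: replacing $Y$ by $fY$ only adds $(Xf)(\rho[Y|Z]-\rho[Z|Y])=0$. So $L(Dh)$ is totally symmetric. Writing $\sigma(Dh)$ for the cyclic sum above, the choice $\tilde C:=\tfrac12\bigl(\sigma(Dh)-C\bigr)$ does the job.

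Conceptually, $-\rho[Z|XY]+\rho[XY|Z]$ is totally symmetric for every weak contrast function, since $K_{ijk}$ is a third partial derivative. So there was never an obstruction.

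Your abandoned patching route would also have worked. The cross terms cancel because all $\rho_\alpha$ share the same $2$-jet and $\sum_\alpha X\psi_\alpha=0$. The chart-wise choice is $K=\tfrac12(\partial_ih_{jk}+\partial_jh_{ki}+\partial_kh_{ij}-C_{ijk})$.

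For the last sentence of the theorem you need not assume $\Phi^{+}$ is invertible. Relative torsion-freeness of $\nabla^{\pm}$ gives $C(X,Y,Z)-C(Y,X,Z)=-h([X,Y],Z)+h(Z,[X,Y])=0$. Duality gives $C(X,Y,Z)=Xh(Y,Z)-2\theta(\nabla^{+}_X\Phi^{+}(Y),\Phi^{-}(Z))-2\theta(\nabla^{+}_X\Phi^{+}(Z),\Phi^{-}(Y))$, which is symmetric in $Y,Z$.
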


\vskip\baselineskip
\vskip\baselineskip
\vskip\baselineskip
\section{Projective flatness on coherent tangent bundle}
\label{sec:3}

\subsection{Projectively flat connection}
\label{sec:3-1}

\begin{dfn}
  [\cite{Eisenhart, Nomizu-Sasaki}]
  We say that two torsion-free affine connections $\nabla$ and $\widehat{\nabla}$ on $M$ are {\em projectively equivalent} if there exists a $1$-form $\rho$ such that
  \[
  \widehat{\nabla}_{X}Y = \nabla_{X}Y + \rho(X)Y + \rho(Y)X
  \]
  holds for $X,Y\in\Gamma(TM)$.
  In particular, $\nabla$ is said to be {\it projectively flat} if $\nabla$ is projectively equivalent to some flat connection in a neighborhood of an arbitrary point of $M$.
\end{dfn}

Let $c:I\rightarrow M$ be a curve, where $I(\neq \emptyset) \subset \R$ is an open interval, and set $\dot{c}:=\frac{d}{dt}c$.
We say $c$ is a {\em $\nabla$-pre-geodesic} if $\nabla_{\dot{c}}\dot{c} = f\dot{c}$ for some function $f$.
If $\nabla$ and $\widehat{\nabla}$ are projectively equivalent, then these two connections have the same pre-geodesics \cite{Eisenhart, Nomizu-Sasaki}.

The above definition can be reformulated in terms of curvature as follows.

\begin{thm}
  [\cite{Eisenhart, Nomizu-Sasaki, Weyl}]
  \label{thm:condition of projectively flat}
    Let $\nabla$ be a torsion-free connection with symmetric Ricci tensor field $Ric^{\nabla}$.
    Then the following conditions are equivalent:
    \begin{itemize}
        \item[(i)]
            the connection $\nabla$ is projectively flat;
        \item[(ii)]
            if $\dim M =2$, $\nabla Ric^{\nabla}$ is totally symmetric, and if $\dim M = n \geq 3$, it holds that for $X,Y,Z\in\Gamma(TM)$,
            \[
            R^{\nabla}(X,Y)Z = \frac{1}{n-1} \{ Ric^{\nabla}(Y,Z)X - Ric^{\nabla}(X,Z)Y \}.
            \]
    \end{itemize}
\end{thm}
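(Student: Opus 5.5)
The plan is to reduce projective flatness to flatness of a modified connection, using the classical Weyl approach. Write $\widehat{\nabla}_X Y = \nabla_X Y + \rho(X)Y + \rho(Y)X$ for an unknown $1$-form $\rho$. The first step is a direct computation of the curvature of $\widehat{\nabla}$:
\[
R^{\widehat{\nabla}}(X,Y)Z = R^{\nabla}(X,Y)Z + \alpha(X,Z)Y - \alpha(Y,Z)X + \{\alpha(X,Y)-\alpha(Y,X)\}Z,
\]
where $\alpha(X,Y) := (\nabla_X\rho)(Y) - \rho(X)\rho(Y)$; the signs are to be fixed in the computation. Contracting gives
\[
Ric^{\widehat{\nabla}} = Ric^{\nabla} - (n-1)\alpha + (\alpha - \alpha^{t}) .
\]
Since $Ric^{\nabla}$ is symmetric, the condition $Ric^{\widehat{\nabla}} = 0$ forces $\alpha$ to be symmetric. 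Indeed, the antisymmetric part gives $(n+1)(\alpha-\alpha^t)=0$. Hence flatness of $\widehat{\nabla}$ requires
\[
\alpha = \tfrac{1}{n-1}Ric^{\nabla} =: P,
\]
the projective Schouten tensor.

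For (i)$\Rightarrow$(ii), I would substitute $\alpha = P$ into $R^{\widehat{\nabla}}=0$. This directly yields the stated curvature formula when $n\ge 3$. For the Codazzi-type condition, I would differentiate $\alpha = P$, that is, $\nabla_X\rho = P(X,\cdot) + \rho(X)\rho$. Then I would use that $d\rho=0$ (which follows from the symmetry of $\alpha$) and the Ricci identity for $\rho$. The result is that $(\nabla_X P)(Y,Z) - (\nabla_Y P)(X,Z)$ equals a contraction of the Weyl projective tensor with $\rho$, which vanishes. So $\nabla Ric^{\nabla}$ is totally symmetric in all dimensions. In dimension $2$, the curvature formula holds automatically, so there only the Codazzi condition is the content.

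For (ii)$\Rightarrow$(i), I would solve locally for $\rho$ from the overdetermined system
\[
\nabla_X \rho = P(X,\cdot) + \rho(X)\rho .
\]
This is a connection on the trivial-plus-cotangent bundle $\mathbb{R}\oplus T^*M$, or equivalently a first-order Frobenius system. Its integrability condition is obtained by computing the curvature of this system and using the Ricci identity $(\nabla^2_{X,Y}\rho - \nabla^2_{Y,X}\rho)(Z) = -\rho(R^{\nabla}(X,Y)Z)$. The resulting condition is exactly
\[
(\nabla_X P)(Y,Z) - (\nabla_Y P)(X,Z) = 0 \quad\text{modulo}\quad \rho\bigl(R^{\nabla}(X,Y)Z - P(Y,Z)X + P(X,Z)Y\bigr).
\]
When $n\ge3$, the curvature hypothesis kills the second term. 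The Codazzi condition then follows from the second Bianchi identity applied to $R = P\wedge\mathrm{id}$, after contracting; this uses $n\ge3$ so that the factor $n-2$ is nonzero. When $n=2$, the curvature form holds identically and the Codazzi condition is the hypothesis. Once $\rho$ exists locally, the computation above shows that $\alpha = P$ and $R^{\widehat{\nabla}}=0$, so $\widehat{\nabla}$ is flat.

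The main obstacle I expect is keeping the sign conventions and the bookkeeping of the integrability computation consistent. In particular, checking that the Frobenius condition closes exactly with the given hypotheses, and that the derivation of the Codazzi property from Bianchi for $n\ge3$ produces the correct coefficient, is where errors are likely. Since this is a classical result cited from \cite{Eisenhart, Nomizu-Sasaki, Weyl}, I would follow the standard treatment in Nomizu--Sasaki for the conventions.
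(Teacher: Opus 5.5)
Your proposal is correct and is essentially the argument the paper gives for its coherent-tangent-bundle generalization specialized to $\phi=\mathrm{id}$ (the paper itself only cites the classical theorem). The shared ingredients are the $\rho$-deformation with the same curvature formula, the Frobenius integration of $\nabla_X\rho = P(X,\cdot)+\rho(X)\rho$ following Belgun, and the second Bianchi identity for $n\ge 3$; there the paper uses linear independence of $X,Y,Z$ where you contract and obtain the factor $n-2$, which is equivalent.

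One small fix: contracting your (correct) formula for $R^{\widehat{\nabla}}$ actually gives $Ric^{\widehat{\nabla}} = Ric^{\nabla} - n\alpha + \alpha^{t} = Ric^{\nabla}-(n-1)\alpha-(\alpha-\alpha^{t})$. It is this minus sign that produces your factor $(n+1)$; with the sign as you wrote it, the antisymmetric part would carry the factor $n-3$ instead.
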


This is a classical theorem shown by Weyl~\cite{Weyl} a hundred years ago, see also Eisenhart~\cite{Eisenhart}, Nomizu--Sasaki~\cite[Theorem 3.3]{Nomizu-Sasaki} and Belgun~\cite[Theorem 1]{Belgun}.

\begin{rem}
  [\cite{Eisenhart}]
  In the case of $\dim M =2$, $R^{\nabla}(X,Y)Z = Ric^{\nabla}(Y,Z)X - Ric^{\nabla}(X,Z)Y$ holds by the definition of the Ricci tensor field.
  In the case of $\dim M \geq 3$, $\nabla Ric^{\nabla}$ is automatically totally symmetric by the second Bianchi identity.
\end{rem}

The following proposition follows directly from the first Bianchi identity \cite{Kobayashi-Nomizu} : $R^{\nabla}(X,Y)Z + R^{\nabla}(Y,Z)X + R^{\nabla}(Z,X)Y =0$.

\begin{prop}
  \label{prop:Ricci symmetric}
  Let $\nabla$ be a torsion-free connection on $M$.
  The Ricci tensor field $Ric^{\nabla}$ is symmetric if and only if $\tr\{ Z \mapsto R^{\nabla}(X,Y)Z \}=0$ for $X,Y,Z \in \Gamma(TM)$.
\end{prop}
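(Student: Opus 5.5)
The plan is to prove the statement by taking the trace of the first Bianchi identity. Write $\rho(X,Y,Z)=R^{\nabla}(X,Y)Z$ and fix a point with a local frame $\{e_{i}\}$ and dual coframe $\{e^{i}\}$. With the convention $Ric^{\nabla}(Y,Z)=\tr\{X\mapsto R^{\nabla}(X,Y)Z\}$, the quantity to relate to the trace of $Z\mapsto R^{\nabla}(X,Y)Z$ is the antisymmetric part $Ric^{\nabla}(Y,Z)-Ric^{\nabla}(Z,Y)$.

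First I apply the first Bianchi identity, valid since $\nabla$ is torsion-free, with $Z$ replaced by the frame vector $e_{i}$. This gives
\[
R^{\nabla}(X,Y)e_{i}+R^{\nabla}(Y,e_{i})X+R^{\nabla}(e_{i},X)Y=0 .
\]
Next I apply $e^{i}$ and sum over $i$. The first term becomes $\tr\{Z\mapsto R^{\nabla}(X,Y)Z\}$. For the second term, antisymmetry of $R^{\nabla}$ in its first two slots gives $e^{i}(R^{\nabla}(Y,e_{i})X)=-e^{i}(R^{\nabla}(e_{i},Y)X)=-Ric^{\nabla}(Y,X)$. The third term is $e^{i}(R^{\nabla}(e_{i},X)Y)=Ric^{\nabla}(X,Y)$. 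Combining the three,
\[
\tr\{Z\mapsto R^{\nabla}(X,Y)Z\}=Ric^{\nabla}(Y,X)-Ric^{\nabla}(X,Y).
\]
From this identity both directions are immediate: $Ric^{\nabla}$ is symmetric exactly when the right-hand side vanishes for all $X,Y$, which is exactly when the trace on the left vanishes.

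The only delicate point is the sign and slot convention for $Ric^{\nabla}$. If the paper instead traces over a different slot, for example $Ric^{\nabla}(X,Y)=\tr\{Z\mapsto R^{\nabla}(Z,X)Y\}$, the identity changes only by an overall sign, so the equivalence is unaffected. No further obstacle is expected; the computation is pointwise and tensorial, so working in a local frame is harmless.
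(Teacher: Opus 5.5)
Your proof is correct and takes the same route as the paper. The paper only remarks that the proposition follows directly from the first Bianchi identity, and your contraction of that identity, giving $\tr\{Z\mapsto R^{\nabla}(X,Y)Z\}=Ric^{\nabla}(Y,X)-Ric^{\nabla}(X,Y)$, is exactly the computation it leaves out.

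One cosmetic slip: the ``different'' convention you cite, $Ric^{\nabla}(X,Y)=\tr\{Z\mapsto R^{\nabla}(Z,X)Y\}$, is your own convention with the variables renamed. A genuinely different one, tracing over the second slot, only flips the overall sign, so your conclusion is unaffected either way.
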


We will extend the concept of projective flatness and generalize Theorem~\ref{thm:condition of projectively flat} to our setting of coherent tangent bundles.

\vskip\baselineskip
\subsection{Projectively flat connection on coherent tangent bundle}
\label{sec:3-2}

Let $\E$ be a rank $n$ vector bundle on $M$, $\phi:TM \rightarrow \E$ a bundle map whose map over $M$.

\begin{dfn}
    \label{dfn:projectively_equivalence_on_coherent_tangent_bundle}
    Let $\nabla^{\E}$ and $\widehat{\nabla}^{\E}$ be connections on $\E$ that are relatively torsion-free with respect to $\phi$.
    Those two connections are called {\it projectively equivalent} with respect to $\phi$ if there exists $\rho\in \Gamma(\E^{*})$ such that
    \[
    \widehat{\nabla}^{\E}_{X}\eta = \nabla^{\E}_{X}\eta + \rho \circ \phi(X) \eta + \rho(\eta)\phi(X)
    \]
    holds for $X,Y\in\Gamma(TM)$ and $\eta\in\Gamma(\E)$.
    In particular, we say that $\nabla^{\E}$ is {\em projectively flat} with respect to $\phi$ if $\nabla^{\E}$ is projectively equivalent to some flat connection (i.e., a connection whose curvature tensor vanishes) with respect to $\phi$ in a neighborhood of an arbitrary point of $M$.
    Moreover, we say $(\E,\phi,\nabla^{\E})$ is projectively flat if $\nabla^{\E}$ is projectively flat with respect to $\phi$.
\end{dfn}

\begin{rem}
    If $\phi$ is an isomorphism, we define $\nabla$ and $\widehat{\nabla}$ by $\phi(\nabla_{X}Y) = \nabla^{\E}_{X} \phi(Y)$ and $\phi(\widehat{\nabla}_{X}Y) = \widehat{\nabla}^{\E}_{X} \phi(Y)$ for $X,Y\in\Gamma(TM)$.
    Then $\nabla^{\E}$ and $\widehat{\nabla}^{\E}$ are projectively equivalent with respect to $\phi$ if and only if $\nabla$ and $\widehat{\nabla}$ are projectively equivalent.
    This can be shown easily.
\end{rem}

We remark that projectively equivalent connections preserve pre-geodesics as mentioned in Section~\ref{sec:3-1}.
We verify that projective equivalence in Definition~\ref{dfn:projectively_equivalence_on_coherent_tangent_bundle} preserves a certain class of pre-geodesics.

\begin{dfn}
  [\cite{Nakajima-Ohmoto}]
  \label{dfn:pre-geodesic}
  Let $(\E,\phi,\nabla^{\E})$ be a coherent tangent bundle.
  A curve $c$ on $M$ is called a {\em pre-geodesic} of $(\E,\phi,\nabla^{\E})$ if it is an immersion $(\dot{c} \neq 0)$ and satisfies, for $t\in I$,
  \begin{align*}
    \phi\circ \dot{c}(t),\; \nabla^{\E}_{\dot{c}}(\phi\circ\dot{c})(t),\; \nabla^{\E}_{\dot{c}} \nabla^{\E}_{\dot{c}}(\phi\circ\dot{c})(t),\; \ldots
  \end{align*}
  are not simultaneously zero and any two are linearly dependent.
\end{dfn}

From this definition, we easily see that:

\begin{prop}
  Let $(\E,\phi,\nabla^{\E})$ and $(\E,\phi,\widehat{\nabla}^{\E})$ be two coherent tangent bundles.
  A curve $c$ is a pre-geodesic of $(\E,\phi,\nabla^{\E})$ if and only if $c$ is a pre-geodesic of $(\E,\phi,\widehat{\nabla}^{\E})$.
\end{prop}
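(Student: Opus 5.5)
We read the proposition with the hypothesis implicit in its placement: $\nabla^{\E}$ and $\widehat{\nabla}^{\E}$ are projectively equivalent with respect to $\phi$ in the sense of Definition~\ref{dfn:projectively_equivalence_on_coherent_tangent_bundle}, via some $\rho\in\Gamma(\E^{*})$.

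\emph{Reformulation.} Fix an immersed curve $c:I\to M$. Put $v:=\phi\circ\dot c$, a section of the pullback bundle $c^{*}\E$. Both connections pull back to $c^{*}\E$, and we define
\[
w_{k}:=(\nabla^{\E}_{\dot c})^{k}v,\qquad \widehat w_{k}:=(\widehat{\nabla}^{\E}_{\dot c})^{k}v .
\]
The condition in Definition~\ref{dfn:pre-geodesic} says that the members of the sequence are not all zero and any two are linearly dependent. This is equivalent to requiring, for each $t\in I$,
\[
\dim \mathrm{span}\{w_{k}(t)\}_{k\ge0}=1 .
\]
It therefore suffices to show that for every $t$ and every $k$,
\[
\mathrm{span}\{w_{0}(t),\dots,w_{k}(t)\}=\mathrm{span}\{\widehat w_{0}(t),\dots,\widehat w_{k}(t)\}.
\]
Taking the union over $k$ then gives equal total spans. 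These spans live in the finite-dimensional fiber $\E_{c(t)}$, so the union is harmless.

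\emph{Key step: a triangular relation, by induction on $k$.} We claim that
\[
\widehat w_{k}=w_{k}+\sum_{j<k}a_{kj}\,w_{j}
\]
for smooth functions $a_{kj}$ on $I$. The case $k=0$ is trivial. For the inductive step, the defining formula of projective equivalence, applied along $c$ with $X=\dot c$, gives for any section $\eta$ of $c^{*}\E$
\[
\widehat{\nabla}^{\E}_{\dot c}\eta=\nabla^{\E}_{\dot c}\eta+\rho(v)\,\eta+\rho(\eta)\,v .
\]
Apply this with $\eta=\widehat w_{k}$ and expand by the Leibniz rule:
\[
\nabla^{\E}_{\dot c}\Bigl(\sum_{j<k}a_{kj}w_{j}\Bigr)=\sum_{j<k}\bigl(a_{kj}'\,w_{j}+a_{kj}\,w_{j+1}\bigr).
\]
Every term here lies in the span of $w_{0},\dots,w_{k}$. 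The terms $\rho(v)\,\widehat w_{k}$ and $\rho(\widehat w_{k})\,w_{0}$ also lie in that span, since $v=w_{0}$. The only contribution to $w_{k+1}$ is the leading term $\nabla^{\E}_{\dot c}w_{k}=w_{k+1}$, with coefficient $1$. This closes the induction.

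\emph{Conclusion.} Since the transition matrix is unipotent lower triangular at each $t$, the spans of the first $k+1$ terms coincide pointwise. Hence $c$ is a pre-geodesic for $\nabla^{\E}$ iff it is one for $\widehat{\nabla}^{\E}$. Alternatively, one can note that projective equivalence is symmetric, using $-\rho$, so only one direction is needed.

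The only point requiring care is that the coefficients $a_{kj}$ are merely functions of $t$, so their derivatives appear. This does not affect the triangular structure, so I expect no real obstacle beyond the bookkeeping in the induction.
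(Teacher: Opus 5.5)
Your argument is correct, and you rightly supply the hypothesis, left implicit in the printed statement, that $\nabla^{\E}$ and $\widehat{\nabla}^{\E}$ are projectively equivalent with respect to $\phi$. The paper gives no proof beyond saying the claim follows easily from the definition, and your unipotent triangular relation between the iterates $\widehat w_{k}$ and $w_{k}$ is exactly the verification it leaves to the reader; comparing spans pointwise also correctly covers points where $\phi\circ\dot c$ vanishes.
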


A result analogous to Theorem~\ref{thm:condition of projectively flat} also holds for projective flatness on a coherent tangent bundle.
Theorems~\ref{thm:condition of projectively flat on coherent tangent bundle ver2} and \ref{thm:condition of projectively flat on coherent tangent bundle ver3} generalize Theorem~\ref{thm:condition of projectively flat}.
As a preliminary step, we first present Proposition~\ref{thm:condition of projectively flat on coherent tangent bundle}.

\begin{prop}
  \label{thm:condition of projectively flat on coherent tangent bundle}
    Let $(\E,\phi,\nabla^{\E})$ be a coherent tangent bundle, and assume that $\tr\{ \eta \mapsto R^{\nabla^{\E}}(X,Y)\eta \}=0$ for $X,Y\in\Gamma(TM)$ and $\eta\in\Gamma(\E)$.
    The following two conditions are mutually equivalent:
    \begin{itemize}
        \item[(i)]
            The connection $\nabla^{\E}$ is projectively flat.
        \item[(ii)]
            For any point $p\in M$, there exists a neighborhood $U$ of $p$ and there exists a tensor field $A\in \Gamma(T^{*}U \otimes \E^{*}|_{U})$ such that
            \begin{align}
              \label{eq:mainthm1_condition1}
              R^{\nabla^{\E}}(X,Y)\eta = \frac{1}{n-1} \{ A(Y,\eta)\phi(X) - A(X,\eta) \phi(Y) \}
            \end{align}
            and
            \begin{align}
              \label{eq:mainthm1_condition2}
              XA(Y,\eta) - YA(X,\eta) - A([X,Y],\eta) = A(Y,\nabla^{\E}_{X}\eta) - A(X, \nabla^{\E}_{Y}\eta)
            \end{align}
            are satisfied for $X,Y\in\Gamma(TU)$ and $\eta\in\Gamma(\E|_{U})$.
    \end{itemize}
\end{prop}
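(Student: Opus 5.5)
The plan is to compute how the curvature of $\nabla^{\E}$ changes under a projective change, and then to reduce the converse direction to finding a parallel section of an auxiliary \emph{linear} connection. Let $\widehat{\nabla}^{\E}_{X}\eta=\nabla^{\E}_{X}\eta+\alpha(X)\eta+\rho(\eta)\phi(X)$ with $\rho\in\Gamma(\E^{*})$ and $\alpha:=\rho\circ\phi$. Put
\[
B(X,\eta):=(\nabla_{X}\rho)(\eta)-\alpha(X)\rho(\eta),
\]
where $\nabla$ is the dual connection on $\E^{*}$.

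\textbf{Step 1 (curvature formula).} Expand $\widehat{\nabla}^{\E}_{X}\widehat{\nabla}^{\E}_{Y}\eta$ and antisymmetrize. The term $\rho(\eta)\{\nabla^{\E}_{X}\phi(Y)-\nabla^{\E}_{Y}\phi(X)-\phi([X,Y])\}$ vanishes by relative torsion-freeness. A direct computation should then give
\[
R^{\widehat{\nabla}^{\E}}(X,Y)\eta=R^{\nabla^{\E}}(X,Y)\eta+d\alpha(X,Y)\eta+B(X,\eta)\phi(Y)-B(Y,\eta)\phi(X).
\]
Relative torsion-freeness also gives $d\alpha(X,Y)=(\nabla_{X}\rho)(\phi Y)-(\nabla_{Y}\rho)(\phi X)=B(X,\phi Y)-B(Y,\phi X)$. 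Taking the trace in $\eta$ therefore yields
\[
\tr R^{\widehat{\nabla}^{\E}}=\tr R^{\nabla^{\E}}+(n+1)\,d\alpha .
\]

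\textbf{Step 2 ((i)$\Rightarrow$(ii)).} Suppose $\widehat{\nabla}^{\E}$ is flat near $p$. Since $\tr R^{\nabla^{\E}}=0$ by hypothesis, Step 1 forces $d\alpha=0$. The curvature formula then reads $R^{\nabla^{\E}}(X,Y)\eta=B(Y,\eta)\phi(X)-B(X,\eta)\phi(Y)$, which is \eqref{eq:mainthm1_condition1} with $A:=(n-1)B$.

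For \eqref{eq:mainthm1_condition2}, compute the exterior covariant derivative
\[
XB(Y,\eta)-B(Y,\nabla^{\E}_{X}\eta)-(X\leftrightarrow Y)-B([X,Y],\eta).
\]
The second-derivative part gives $-\rho(R^{\nabla^{\E}}(X,Y)\eta)$, the $d\alpha$ term drops out, and the remaining products are $-\alpha(Y)(\nabla_{X}\rho)(\eta)+\alpha(X)(\nabla_{Y}\rho)(\eta)$. Substituting the expression for $R^{\nabla^{\E}}$ just obtained and rewriting $\nabla\rho=B+\alpha\otimes\rho$, everything cancels. This is exactly \eqref{eq:mainthm1_condition2}.

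\textbf{Step 3 ((ii)$\Rightarrow$(i)), the main obstacle.} Set $B:=A/(n-1)$. We must solve $\nabla_{X}\rho=B(X,\cdot)+\alpha(X)\rho$ locally; this is a Riccati-type nonlinear equation. The plan is to linearize it by writing $\rho=\tau/f$ and introducing a connection $D$ on $\E^{*}\oplus\underline{\R}$:
\[
D_{X}(\tau,f):=\bigl(\nabla_{X}\tau-fB(X,\cdot),\;Xf+\tau(\phi X)\bigr).
\]
If $(\tau,f)$ is $D$-parallel with $f\neq0$, then $\rho=\tau/f$ solves the Riccati equation. Indeed,
\[
\nabla_{X}(\tau/f)=B(X,\cdot)-\tau\,Xf/f^{2}=B(X,\cdot)+\alpha(X)\rho .
\]

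It remains to check that $D$ is flat. For the first component of $R^{D}$, use $R^{\E^{*}}(X,Y)\tau=-\tau\circ R^{\nabla^{\E}}(X,Y)$ together with \eqref{eq:mainthm1_condition1} and \eqref{eq:mainthm1_condition2}; the terms $\tau(\phi Y)B(X,\cdot)-\tau(\phi X)B(Y,\cdot)$ should cancel. For the second component, relative torsion-freeness reduces it to a multiple of $f\{B(X,\phi Y)-B(Y,\phi X)\}$. Taking the trace of \eqref{eq:mainthm1_condition1} gives $\tr R^{\nabla^{\E}}=B(Y,\phi X)-B(X,\phi Y)$, so this expression vanishes by the trace hypothesis.

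Hence $D$ is flat on a simply connected neighborhood of $p$. We may then take a $D$-parallel section with $f(p)=1$ and shrink the neighborhood so that $f\neq0$. With $\rho=\tau/f$, the data satisfy $d\alpha=B(X,\phi Y)-B(Y,\phi X)=0$ and $R^{\nabla^{\E}}$ has the form above, so Step 1 gives $R^{\widehat{\nabla}^{\E}}=0$. Thus $\nabla^{\E}$ is projectively flat with respect to $\phi$.
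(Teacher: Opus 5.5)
Your proof is correct. The curvature formula and the direction (i)$\Rightarrow$(ii) match the paper's argument: its $A$ is your $(n-1)B$, and \eqref{eq:mainthm1_condition2} is verified by the same cancellation.

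For (ii)$\Rightarrow$(i), however, you take a genuinely different route. The paper, following Belgun, attacks the Riccati equation $X\rho(\eta)=\frac{1}{n-1}A(X,\eta)+\rho(\nabla^{\E}_{X}\eta)+\rho\circ\phi(X)\rho(\eta)$ directly. It defines an $n$-dimensional distribution $\D$ on the total space of $\E^{*}|_{U}$, namely the graph of the right-hand side over the horizontal spaces of the dual connection. It proves involutivity by a coordinate computation of horizontal and vertical brackets, using \eqref{eq:mainthm1_condition2}, relative torsion-freeness, and the symmetry $A(X,\phi(Y))=A(Y,\phi(X))$ that comes from the trace hypothesis. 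It then takes $\rho$ from an integral leaf via Frobenius.

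Your substitution $\rho=\tau/f$ replaces this nonlinear problem by the search for a parallel section of the linear connection $D$ on $\E^{*}\oplus\underline{\R}$. The same three ingredients now appear cleanly separated: \eqref{eq:mainthm1_condition1} and \eqref{eq:mainthm1_condition2} kill the $\E^{*}$-component of $R^{D}$, while the trace hypothesis is exactly the vanishing of its $\underline{\R}$-component.

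This buys a shorter, coordinate-free verification and a conceptual reading of the proposition: projective flatness of $(\E,\phi,\nabla^{\E})$ amounts to flatness of an associated rank $(n+1)$ ``cone'' connection. Concretely, take the setting of Section~\ref{sec:4} with $\E=E^{-}$, $\phi=-\nu_{*}$, $(E^{-})^{*}\cong f_{*}TM$, and $A$ obtained from the affine shape operator via \eqref{dfn:S}. There your $D$ is exactly the flat connection of $\R^{n+1}$ along $f$, written in the splitting $f_{*}TM\oplus\operatorname{span}\{\xi\}$, so the parallel sections you need are just constant vectors.

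The only price is that $\rho$ is defined only where $f\neq 0$. This is harmless for a local statement, since the paper's integral leaves are likewise only local graphs over $U$, and both arguments yield local solutions with arbitrary initial value $\rho_{p}$.
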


\begin{rem}
  By Proposition~\ref{prop:Ricci symmetric}, the condition of $\tr\{ \eta \mapsto R^{\nabla^{\E}}(X,Y)\eta \}=0$ is analogous to the condition that the Ricci tensor field is symmetric.
  We note that the assumption $\tr\{ \eta \mapsto R^{\nabla^{\E}}(X,Y)\eta \}=0$ is not used in the first part of the following proof.
  Therefore, ((i)$\Rightarrow$(ii)) holds without this assumption.
  Indeed, the proof of Theorem~\ref{thm:condition of projectively flat} also does.
\end{rem}

\begin{rem}
  The condition \eqref{eq:mainthm1_condition2} becomes unnecessary depending on the rank of $\phi$.
  For details, see Theorem~\ref{thm:condition of projectively flat on coherent tangent bundle ver3}.
\end{rem}

\noindent {\sl Proof of Proposition~\ref{thm:condition of projectively flat on coherent tangent bundle}} :\;
Let $U \subset M$ be a neighborhood of $p\in M$, $\rho \in \Gamma({\E^{*}\big|_{U}})$ a tensor field, and we define a connection $\widehat{\nabla}^{\E}$ on $\E\big|_{U}$ by
\begin{align*}
  \widehat{\nabla}^{\E}_{X}\eta = \nabla^{\E}_{X}\eta + \rho \circ \phi(X) \eta + \rho(\eta)\phi(X)
\end{align*}
for $X\in\Gamma(TU)$ and $\eta\in\Gamma(\E|_{U})$.
From straightforward calculations, we have, for $X,Y\in\Gamma(TU)$ and $\eta\in\Gamma(\E|_{U})$,
\begin{align}
  \label{curvature_of_projectively_equivalent_connection}
  \begin{split}
  R^{\widehat{\nabla}^{\E}} (X,Y)\eta &= R^{\nabla^{\E}}(X,Y)\eta - \{ Y\rho(\eta) - \rho(\nabla^{\E}_{Y}\eta) - \rho\circ\phi(Y) \rho(\eta)\}\phi(X)\\
  &\quad\;  + \{ X\rho(\eta) - \rho(\nabla^{\E}_{X}\eta) - \rho\circ\phi(X) \rho(\eta)\}\phi(Y) + (d(\rho \circ \phi)(X,Y))\eta.
  \end{split}
\end{align}

\vskip\baselineskip
\noindent
((i)$\Rightarrow$(ii))
Since $\nabla^{\E}$ is projectively flat, for any point $p\in M$, there exist a neighborhood $U$ and a tensor field $\rho \in \Gamma({\E^{*}\big|_{U}})$ such that the above connection $\widehat{\nabla}^{\E}$ on $\E|_{U}$ is flat.
We set
\[
A(X,\eta) := (n-1)\{ X\rho(\eta) -\rho(\nabla^{\E}_{X}\eta) -\rho \circ \phi(X) \rho(\eta) \}.
\]
By \eqref{curvature_of_projectively_equivalent_connection}, it holds that
\begin{align*}
  R^{\nabla^{\E}}(X,Y)\eta = \frac{1}{n-1} \{ A(Y,\eta)\phi(X) - A(X,\eta)\phi(Y) \} - (d(\rho \circ \phi) (X,Y))\eta
\end{align*}
for $X,Y\in\Gamma(TU)$ and $\eta\in\Gamma(\E|_{U})$.
Since $\tr \{\eta\mapsto R^{\nabla^{\E}}(X,Y)\eta \} =0$, it follows that
\begin{align*}
  0 &= \tr \{\eta\mapsto R^{\nabla^{\E}}(X,Y)\eta \}\\
  &= \frac{1}{n-1} \left\{ A(Y,\phi(X)) - A(X,\phi(Y)) \right\} - n (d(\rho \circ \phi) (X,Y))\\
  &= \{ Y\rho\circ\phi(X) - \rho(\nabla^{\E}_{Y}\phi(X)) -\rho\circ\phi(Y) \rho\circ\phi(X) \}\\
  &\qquad\qquad - \{ X\rho\circ\phi(Y) - \rho(\nabla^{\E}_{X}\phi(Y)) -\rho\circ\phi(X) \rho\circ\phi(Y) \} - n (d(\rho \circ \phi) (X,Y))\\
  &= -\left\{ X\rho\circ\phi(Y) - Y\rho\circ\phi(X) - \rho\circ\phi([X,Y]) \right\} - n (d(\rho \circ \phi) (X,Y))\\
  &= -(n+1) d(\rho \circ \phi) (X,Y).
\end{align*}
Hence, $\rho\circ\phi$ is a closed $1$-form, and \eqref{eq:mainthm1_condition1} is satisfied.
Moreover, \eqref{eq:mainthm1_condition2} follows from the following computation:
\begin{align*}
  \frac{1}{n-1} \{ XA&(Y,\eta) - YA(X,\eta) - A([X,Y],\eta) - A(Y,\nabla^{\E}_{X}\eta) + A(X,\nabla^{\E}_{Y}\eta) \}\\
  =& \{ XY\rho(\eta) - X\rho(\nabla^{\E}_{Y}\eta) - (X\rho\circ\phi(Y))\rho(\eta) -\rho\circ\phi(Y)(X\rho(\eta)) \} \\
  & \quad\quad - \{ YX\rho(\eta) - Y\rho(\nabla^{\E}_{X}\eta) - (Y\rho\circ\phi(X))\rho(\eta) - \rho\circ\phi(X)(Y\rho(\eta)) \} \\
  & \quad\quad\quad\quad - \{ [X,Y]\rho(\eta) -\rho(\nabla^{\E}_{[X,Y]}\eta) -\rho \circ \phi([X,Y]) \rho(\eta) \} \\
  & \quad\quad\quad\quad\quad\quad - \{ Y\rho(\nabla^{\E}_{X}\eta) -\rho(\nabla^{\E}_{Y}\nabla^{\E}_{X}\eta) -\rho \circ \phi(Y) \rho(\nabla^{\E}_{X}\eta) \} \\
  & \quad\quad\quad\quad\quad\quad\quad\quad + \{ X\rho(\nabla^{\E}_{Y}\eta) -\rho(\nabla^{\E}_{X}\nabla^{\E}_{Y}\eta) -\rho \circ \phi(X) \rho(\nabla^{\E}_{Y}\eta) \}\\
  =& - (d\rho\circ\phi(X,Y))\rho(\eta) - \rho\circ\phi(Y)\{ X\rho(\eta) - \rho(\nabla^{\E}_{X}\eta) \}\\
  & \quad\quad + \rho\circ\phi(X) \{ Y\rho(\eta) - \rho(\nabla^{\E}_{Y}\eta) \} - \rho(R^{\nabla^{\E}}(X,Y)\eta) \\
  =& - \rho\circ\phi(Y) \left( \frac{1}{n-1} A(X,\eta) + \rho\circ\phi(X) \rho(\eta) \right)\\
  & \quad\quad + \rho\circ\phi(X) \left( \frac{1}{n-1} A(Y,\eta) + \rho\circ\phi(Y) \rho(\eta) \right) - \rho(R^{\nabla^{\E}}(X,Y)\eta) \\
  =& \rho \left( \frac{1}{n-1} A(Y,\eta) \phi(X) - \frac{1}{n-1} A(X,\eta) \phi(Y) - R^{\nabla^{\E}}(X,Y)\eta \right) \\
  =& 0.
\end{align*}

\vskip\baselineskip
\noindent
((ii)$\Rightarrow$(i))
By the assumption, there exists $A\in \Gamma(T^{*}U \otimes \E^{*}|_{U})$ satisfying \eqref{eq:mainthm1_condition1} and \eqref{eq:mainthm1_condition2}.
Since $\tr \{\eta\mapsto R^{\nabla^{\E}}(X,Y)\eta \} =0$, the following equality holds for $X,Y\in\Gamma(TU)$ and $\eta\in\Gamma(\E|_{U})$:
\begin{align*}
  0 &= \tr \{\eta\mapsto R^{\nabla^{\E}}(X,Y)\eta \}\\
  &= \tr \left\{ \eta \mapsto \frac{1}{n-1} \{ A(Y,\eta)\phi(X) - A(X,\eta) \phi(Y) \right\}\\
  &= \frac{1}{n-1} A(Y,\phi(X)) - \frac{1}{n-1} A(X,\phi(Y)).
\end{align*}
Hence, for $X,Y\in\Gamma(TU)$, we obtain
\begin{align}
  \label{eq:A-sym}
  A(X,\phi(Y)) = A(Y,\phi(X)).
\end{align}

Assume that there exists $\rho \in \Gamma(\E^{*}|_{U})$ satisfying
\begin{align}
  \label{eq:diff_eq}
  X\rho(\eta) = \frac{1}{n-1} A(X,\eta) + \rho(\nabla^{\E}_{X}\eta) + \rho\circ\phi(X)\rho(\eta)
\end{align}
for $X\in\Gamma(TU)$ and $\eta\in\Gamma(\E|_{U})$.
Then, $\rho\circ\phi$ is closed.
Indeed, for $X,Y\in\Gamma(TU)$, it follows from \eqref{dfn:relatively torsion-free} that
\begin{align*}
  d(\rho\circ\phi)(X,Y) &= X(\rho\circ\phi(Y)) - Y(\rho\circ\phi(X)) - \rho\circ\phi([X,Y])\\
  &= \left\{ \frac{1}{n-1} A(X,\phi(Y)) + \rho(\nabla^{\E}_{X}\phi(Y)) + \rho\circ\phi(X)\rho\circ\phi(Y) \right\}\\
  &\qquad\qquad - \left\{ \frac{1}{n-1} A(Y,\phi(X)) + \rho(\nabla^{\E}_{Y}\phi(X)) + \rho\circ\phi(Y)\rho\circ\phi(X) \right\} \\
  & \qquad\qquad\qquad\qquad -\rho\circ\phi([X,Y])\\
  &= \rho(\nabla^{\E}_{X}\phi(Y) - \nabla^{\E}_{Y}\phi(X)) - \rho\circ\phi([X,Y])\\
  &=0.
\end{align*}
Moreover, by \eqref{eq:mainthm1_condition1} and \eqref{curvature_of_projectively_equivalent_connection}, the curvature tensor of the connection $\widehat{\nabla}^{\E}$ vanishes.
Hence, it suffices to show that such a $1$-form $\rho$ exists locally.

The following proof is based on Belgun~\cite{Belgun}, and we follow his notation.

Let $\barr{\nabla}$ be the connection on $\E^{*}|_{U}$ defined by $\barr{\nabla}_{X}\alpha=X\alpha - \alpha(\nabla_{X}\,\cdot\,)$ for $X\in\Gamma(TU)$ and $\alpha \in \Gamma(\E^{*}|_{U})$.
Set a $2n$-dimensional manifold $N$ to be the total space of $\E^{*}|_{U}$.
For $\alpha \in N$, the tangent space $T_{\alpha}N$ decomposes into the vertical space $T^{V}_{\alpha}N$ and the horizontal space $T^{H}_{\alpha}N$ with respect to $\barr{\nabla}$.
For $\alpha\in \E^{*}_{p}$, we have $T^{V}_{\alpha}N \simeq \E^{*}_{p}$ and $T^{H}_{\alpha}N \simeq T_{p}M$.
We denote by $\barr{X}\in\Gamma(T^{H}N)$ the horizontal lift of $X\in\Gamma(TM)$.
For any section $\alpha\in\Gamma(\E^{*}|_{U})$, the differential map of $\alpha :U \to N$, $p \mapsto \alpha_{p}$ satisfies that $\alpha_{*}(X)$ decomposes into the vertical part $\barr{\nabla}_{X}\alpha$ and the horizontal part $\barr{X}$.

We consider a distribution $\D$ on $N$ defined by
\begin{align*}
  \D_{\alpha} = \left\{ (\A^{V},\A^{H}) \in T^{V}_{\alpha}N \oplus T^{H}_{\alpha}N \;|\; \A^{V} = \frac{1}{n-1} A(\A^{H},\,\cdot\,) + \alpha \circ \phi (\A^{H}) \alpha  \right\}
\end{align*}
for $\alpha \in N$.
The distribution $\D$ is an $n$-dimensional subbundle of $TN$, since there is a one-to-one correspondence between vectors of $\D_{\alpha}$ and vectors of $T_{p}U$, where $\alpha \in \E^{*}_{p}$.
To check that $\D$ is involutive, take $\A,\B\in\Gamma(\D)$ satisfying $\A^{H}=\barr{X}$ and $\B^{H}=\barr{Y}$, where $X,Y\in\Gamma(TU)$.
We show that
\begin{align*}
  [\A,\B] = [\barr{X},\barr{Y}] + [\A^{V},\barr{Y}] + [\barr{X},\B^{V}] + [\A^{V},\B^{V}]
\end{align*}
is a section of $\D$.
First, the horizontal part of $[\barr{X},\barr{Y}]$ is $\barr{[X,Y]}$ and the vertical part is given by the curvature of $\barr{\nabla}$:
\begin{align*}
  [\barr{X},\barr{Y}]^{V} = -R^{\barr{\nabla}}(X,Y)\alpha = \alpha(R^{\nabla^{\E}}(X,Y)\,\cdot\,).
\end{align*}
Next, we compute $[\A^{V},\barr{Y}]$.
The bracket $[A(X,\,\cdot\,),\barr{Y}]$ is vertical and coincides with $-\barr{\nabla}_{Y} A(X,\,\cdot\,)$.
To compute $[\alpha \circ \phi(X) \alpha, \barr{Y}]$, we introduce the following notation:
\begin{align*}
  \rd_{i} &= \frac{\rd}{\rd x_{i}}, \quad
  \rd^{i} = \frac{\rd}{\rd y_{i}}, \quad
  X=X_{i}\rd_{i}, \quad Y=Y_{i}\rd_{i}, \quad
  y_{i}(\eta_{j}) = \delta_{ij},\\
  \phi(\rd_{i}) &=\phi_{ij}\eta_{j},\quad
  \alpha = \alpha_{i} y_{i}, \quad
  \nabla^{\E}_{X}\eta_{i} = \omega_{i}^{j}(X)\eta_{j}, \quad
  \barr{\nabla}_{X}y_{i} = \barr{\omega}_{i}^{j}(X)y_{j},
\end{align*}
where $(x_{1},\dots,x_{n})$ are local coordinates of $U$, $(\eta_{1},\dots,\eta_{n})$ is a frame of $\E|_{U}$, and $(y_{1},\dots,y_{n})$ is a frame of $\E^{*}|_{U}$.
The horizontal lift $\barr{X}$ is given by
\begin{align*}
  \barr{X}_{\alpha} = X_{i}\rd_{i} - \alpha_{j}\barr{\omega}_{j}^{k}(X)\rd^{k} = X_{i}\rd_{i} + \alpha_{j}\omega_{k}^{j}(X)\rd^{k}
\end{align*}
Therefore, the following equalities hold:
\begin{align*}
  [\alpha \circ \phi (X) \alpha,\;\barr{Y}] &= [X_{i}\phi_{i}\alpha_{i}\alpha_{j}\rd^{j},\; Y_{k}\rd_{k} + \alpha_{l}\omega_{m}^{l}(Y)\rd^{m}]\\
  &= [X_{i}\phi_{i}\alpha_{i}\alpha_{j}\rd^{j},\; Y_{k}\rd_{k}] + [X_{i}\phi_{i}\alpha_{i}\alpha_{j}\rd^{j},\; \alpha_{l}\omega_{m}^{l}(Y)\rd^{m}]\\
  &=-Y_{k}(\rd_{k}X_{i}\phi_{i})\alpha_{i}\alpha_{j} \rd^{j} - X_{i}\phi_{i}\alpha_{l}\omega_{i}^{l}(Y)\alpha_{j}\rd^{j}\\
  &= -\alpha(\nabla_{Y}\phi(X))\alpha.
\end{align*}
Then, we have
\begin{align*}
  [\A^{V},\barr{Y}] = \frac{1}{n-1}[A(X,\,\cdot\,),\barr{Y}] + [\alpha \circ \phi (X) \alpha,\;\barr{Y}] = -\frac{1}{n-1} \barr{\nabla}_{Y} A(X,\,\cdot\,) -\alpha(\nabla_{Y}\phi(X))\alpha.
\end{align*}
By the same argument, we obtain
\begin{align*}
  [\barr{X},\; \B^{V}] = \frac{1}{n-1} \barr{\nabla}_{X} A(Y,\,\cdot\,) + \alpha(\nabla_{X}\phi(Y)) \alpha.
\end{align*}
Moreover, let $\A^{V} = \A_{i}\rd^{i}$ and $\B^{V}=\B_{i}\rd^{i}$, then it follows that
\begin{align*}
  [\A^{V},\; \B^{V}] &= [\A_{i}\rd^{i},\; \B_{j}\rd^{j}]\\
  &= \A_{i}(\rd^{i}\B_{j})\rd^{j} - \B_{j}(\rd^{j}\A_{i})\rd^{i}\\
  &= \A_{i}(\rd^{i}Y_{k}\phi_{kl}\alpha_{l}\alpha_{j})\rd^{j} - \B_{j}(\rd^{j}X_{k}\phi_{kl}\alpha_{l}\alpha_{i})\rd^{i}\\
  &= \A_{i}Y_{k}\phi_{ki}\alpha_{j}\rd^{j} + \A_{i}Y_{k}\phi_{kl}\alpha_{l}\rd^{i} - \B_{j}X_{k}\phi_{kj}\alpha_{i}\rd^{i} - \B_{j}X_{k}\phi_{kl}\alpha_{l}\rd^{j}\\
  &= \A^{V}(\phi(Y))\alpha + \alpha\circ\phi(Y) \A^{V} - \B^{V}(\phi(X))\alpha - \alpha\circ\phi(X)\B^{V}\\
  &=\left\{ \frac{1}{n-1} A(X,\phi(Y)) + \alpha\circ\phi(X) \alpha\circ\phi(Y) \right\}\alpha\\
  & \qquad\qquad + \alpha\circ\phi(Y) \left\{ \frac{1}{n-1} A(X,\,\cdot\,) + \alpha\circ\phi(X)\alpha \right\}\\
  &\qquad\qquad\qquad\qquad - \left\{ \frac{1}{n-1} A(Y,\phi(X)) + \alpha\circ\phi(Y) \alpha\circ\phi(X) \right\}\alpha\\
  & \qquad\qquad\qquad\qquad\qquad\qquad - \alpha\circ\phi(X) \left\{ \frac{1}{n-1} A(Y,\,\cdot\,) + \alpha\circ\phi(Y)\alpha \right\}\\
  &= \left\{ \frac{1}{n-1} A(X,\phi(Y)) - \frac{1}{n-1} A(Y,\phi(X)) \right\}\alpha\\
  &\qquad\qquad + \frac{1}{n-1} A(X,\,\cdot\,)\alpha\circ\phi(Y) - \frac{1}{n-1} A(Y,\,\cdot\,)\alpha\circ\phi(X)\\
  &= -\alpha(R^{\nabla^{\E}}(X,Y)\,\cdot\,).
\end{align*}
The last equality follows from \eqref{eq:A-sym}.
Therefore, the horizontal part of $[\A,\B]$ is just the lift of the bracket
\begin{align*}
  [\A,\B]^{H} = \barr{[X,Y]},
\end{align*}
and the vertical part of $[\A,\B]$ is written as follows:
\begin{align*}
  [\A,\B]^{V} &= - \frac{1}{n-1} \barr{\nabla}_{Y}A(X,\,\cdot\,) - \alpha(\nabla^{\E}_{Y}\phi(X))\alpha + \frac{1}{n-1}\barr{\nabla}_{X}A(Y,\,\cdot\,) + \alpha(\nabla^{\E}_{X}\phi(Y))\alpha\\
  &= \frac{1}{n-1} \left\{ XA(Y,\,\cdot\,) - YA(X,\,\cdot\,) + A(X,\nabla^{\E}_{Y}\,\cdot\,) - A(Y,\nabla^{\E}_{X}\,\cdot\,) \right\}\\
  &\qquad\qquad\qquad\qquad\qquad\qquad\qquad\qquad\qquad + \alpha(\nabla^{\E}_{X}\phi(Y) - \nabla^{\E}_{Y}\phi(X))\alpha  \\
  &=\frac{1}{n-1}A([X,Y],\,\cdot\,) + \alpha\circ\phi([X,Y])\alpha.
\end{align*}
The last equality follows from \eqref{dfn:relatively torsion-free} and \eqref{eq:mainthm1_condition2}.
Hence, $[\A,\B]\in\Gamma(\D)$, and thus $\D$ is involutive.

Therefore, there exists a section $\rho \in \Gamma(\E^{*}|_{U})$ such that $\barr{\nabla}_{X}\rho \oplus \barr{X}\in\Gamma(\D)$, i.e., it holds that
\begin{align*}
  X\rho(\eta) - \rho(\nabla^{\E}_{X} \eta) = A(X,\eta) + \rho\circ\phi(X)\rho(\eta)
\end{align*} 
for $X\in\Gamma(TU)$ and $\eta\in\Gamma(\E|_{U})$, and $\nabla^{\E}$ is projectively flat.
\qed

\vskip\baselineskip

Under the assumption that $\rank\phi \geq 2$ almost everywhere, the condition (ii) can be considered globally.

\begin{thm}
  \label{thm:condition of projectively flat on coherent tangent bundle ver2}
    Let $(\E,\phi,\nabla^{\E})$ be a coherent tangent bundle.
    We assume that $\rank \phi \geq 2$ almost everywhere and $\tr\{ \eta \mapsto R^{\nabla^{\E}}(X,Y)\eta \}=0$ for $X,Y\in\Gamma(TM)$ and $\eta\in\Gamma(\E)$.
    The following two conditions are mutually equivalent:
    \begin{itemize}
        \item[(i)]
            The connection $\nabla^{\E}$ is projectively flat;
        \item[(ii)]
            There exists a tensor field $A\in \Gamma(T^{*}M \otimes \E^{*})$ such that \eqref{eq:mainthm1_condition1} and \eqref{eq:mainthm1_condition2} hold.
    \end{itemize}
\end{thm}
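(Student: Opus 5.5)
The direction (ii)$\Rightarrow$(i) is immediate from Proposition~\ref{thm:condition of projectively flat on coherent tangent bundle}: given a global $A\in\Gamma(T^{*}M\otimes\E^{*})$ satisfying \eqref{eq:mainthm1_condition1} and \eqref{eq:mainthm1_condition2}, its restriction to any neighborhood $U$ of any point satisfies condition (ii) of that proposition. Hence $\nabla^{\E}$ is projectively flat. The content of the theorem is therefore (i)$\Rightarrow$(ii). Proposition~\ref{thm:condition of projectively flat on coherent tangent bundle} already gives, for each $p$, a neighborhood $U_{p}$ and a tensor $A_{p}$ on $U_{p}$ satisfying both conditions. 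I would show that these local tensors are uniquely determined by the curvature wherever $\rank\phi\ge 2$, so that they glue to a global $A$.

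The key step is a pointwise uniqueness lemma. Let $q$ be a point with $\rank\phi_{q}\ge 2$, and suppose $A,A'$ both satisfy \eqref{eq:mainthm1_condition1} at $q$. Set $B=A-A'$. Then
\[
B(Y,\eta)\phi(X)=B(X,\eta)\phi(Y)
\]
for all $X,Y\in T_{q}M$ and $\eta\in\E_{q}$. If $\phi(X)$ and $\phi(Y)$ are linearly independent, this forces $B(X,\eta)=B(Y,\eta)=0$. Since $\rank\phi_{q}\ge2$, the vectors $X$ with $\phi(X)\neq0$ form an open dense subset of $T_{q}M$ (the complement of the proper subspace $\ker\phi_{q}$). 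For each such $X$ we can choose $Y$ with $\phi(Y)$ independent of $\phi(X)$, so $B(X,\cdot)=0$. By linearity and density, $B(X,\cdot)=0$ for every $X$, i.e.\ $B=0$ at $q$.

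Now take two neighborhoods $U_{p},U_{p'}$ from the proposition. On $U_{p}\cap U_{p'}$ the tensors $A_{p}$ and $A_{p'}$ agree at every point where $\rank\phi\ge2$. By hypothesis this set is dense (its complement has measure zero, hence empty interior). Both tensors are smooth, so they agree on all of $U_{p}\cap U_{p'}$. Consequently the $A_{p}$ patch together into a well-defined smooth $A\in\Gamma(T^{*}M\otimes\E^{*})$. The identities \eqref{eq:mainthm1_condition1} and \eqref{eq:mainthm1_condition2} are local, so they hold globally because they hold on each $U_{p}$.

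The main obstacle I expect is making the hypothesis ``almost everywhere'' usable. I would interpret it as saying the set $\{q:\rank\phi_{q}\ge2\}$ is dense. This set is open by lower semicontinuity of rank, and a set whose complement has measure zero is dense. Uniqueness on that dense set plus continuity then does the gluing. No partition of unity is needed, because the local $A_{p}$ are canonical rather than merely existent.
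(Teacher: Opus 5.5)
Your proof is correct and follows the paper's argument. The direction (ii)$\Rightarrow$(i) is read off from Proposition~\ref{thm:condition of projectively flat on coherent tangent bundle}, and for (i)$\Rightarrow$(ii) the local tensors $A_{1},A_{2}$ are glued using the identity $\{A_{1}(Y,\eta)-A_{2}(Y,\eta)\}\phi(X)=\{A_{1}(X,\eta)-A_{2}(X,\eta)\}\phi(Y)$ on overlaps together with $\rank\phi\ge 2$ almost everywhere; you simply spell out the pointwise linear algebra and the density-plus-continuity step that the paper compresses into one sentence.
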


\proof
((i)$\Rightarrow$(ii))
By Proposition~\ref{thm:condition of projectively flat on coherent tangent bundle}, for each $i=1,2$ and $p_{i}\in M$, there exist a neighborhood $U_{i}$ of $p_{i}$ and a tensor field $A_{i}\in \Gamma(T^{*}U_{i} \otimes \E^{*}\big|_{U_{i}})$ satisfying \eqref{eq:mainthm1_condition1} and \eqref{eq:mainthm1_condition2}.
If $U_{1} \cap U_{2} \neq \emptyset$, then it holds that
\begin{align*}
  \{A_{1}(Y,\eta) - A_{2}(Y,\eta)\} \phi(X) = \{A_{1}(X,\eta) - A_{2}(X,\eta)\} \phi(Y)
\end{align*}
on $U_{1} \cap U_{2}$.
Since $\rank \phi \geq 2$ almost everywhere, it follows that $A_{1}=A_{2}$ on $U_{1}\cap U_{2}$.
Hence, we obtain a globally defined tensor field satisfying \eqref{eq:mainthm1_condition1} and \eqref{eq:mainthm1_condition2}.

\noindent
((ii)$\Rightarrow$(i))
This is clear from Proposition~\ref{thm:condition of projectively flat on coherent tangent bundle}.
\qed
\vskip\baselineskip

Furthermore, if we assume that $\rank \phi \geq 3$ almost everywhere, we obtain a stronger result.

\begin{thm}
  \label{thm:condition of projectively flat on coherent tangent bundle ver3}
  Let $(\E,\phi,\nabla^{\E})$ be a coherent tangent bundle with $\rank \phi \ge 3$ almost everywhere.
  Suppose that $\tr\{ \eta \mapsto R^{\nabla^{\E}}(X,Y)\eta \}=0$ for $X,Y\in\Gamma(TM)$ and $\eta\in\Gamma(\E)$.
  The following conditions are mutually equivalent.
  \begin{itemize}
      \item[(i)]
        The coherent tangent bundle $(\E,\phi,\nabla^{\E})$ is projectively flat;
      \item[(ii)]
        There exists a tensor field $A\in\Gamma(T^{*}M \otimes \E^{*})$ such that \eqref{eq:mainthm1_condition1} holds.
  \end{itemize}
\end{thm}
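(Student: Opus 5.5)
The plan is to reduce everything to Theorem~\ref{thm:condition of projectively flat on coherent tangent bundle ver2}. Since $\rank\phi\ge 3$ almost everywhere certainly gives $\rank\phi\geq 2$ almost everywhere, the direction (i)$\Rightarrow$(ii) follows at once from that theorem, which yields a global $A$ satisfying \eqref{eq:mainthm1_condition1}, and in fact \eqref{eq:mainthm1_condition2} as well. For (ii)$\Rightarrow$(i), by the same theorem it suffices to show that any $A$ satisfying \eqref{eq:mainthm1_condition1} automatically satisfies \eqref{eq:mainthm1_condition2}. This is the analogue of the classical fact that in dimension $\geq 3$ the Ricci tensor of a projectively flat-type curvature satisfies the Codazzi condition by the second Bianchi identity.

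To carry this out, set
\[
T(X,Y,\eta):=XA(Y,\eta)-YA(X,\eta)-A([X,Y],\eta)-A(Y,\nabla^{\E}_X\eta)+A(X,\nabla^{\E}_Y\eta).
\]
This is tensorial, and it is exactly the exterior covariant derivative of $A$ viewed as an $\E^{*}$-valued $1$-form. Next apply the second Bianchi identity for the connection $\nabla^{\E}$ on $\E$, written via the exterior covariant derivative $d^{\nabla}$ acting on $\mathrm{End}(\E)$-valued forms: $\mathfrak{S}_{X,Y,Z}(\nabla_X R^{\nabla^{\E}})(Y,Z)=0$, with the torsion terms absorbed using a torsion-free auxiliary connection on $TM$ (or coordinate fields). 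Then substitute \eqref{eq:mainthm1_condition1}. When one differentiates $A(Z,\eta)\phi(Y)$, the derivative of $\phi(Y)$ enters only through $\nabla^{\E}_X\phi(Y)-\nabla^{\E}_Y\phi(X)-\phi([X,Y])$, which vanishes by relative torsion-freeness \eqref{dfn:relatively torsion-free}. The result should collapse to the identity
\[
T(X,Y,\eta)\phi(Z)+T(Y,Z,\eta)\phi(X)+T(Z,X,\eta)\phi(Y)=0
\]
for all $X,Y,Z$ and $\eta$.

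The final step is linear algebra at points where $\rank\phi_p\ge3$. Pick $X,Y,Z\in T_pM$ with $\phi(X),\phi(Y),\phi(Z)$ linearly independent in $\E_p$. The identity then forces $T(X,Y,\eta)=0$. Since $T$ is bilinear and alternating in $(X,Y)$, it remains to show that $T(X,Y,\eta)=0$ for all $X,Y$, not just for those that can be completed to such a triple. To do this, work modulo $\ker\phi_p$. For any $X,Y$ with $\phi(X),\phi(Y)$ independent, choose $Z$ with $\phi(Z)$ outside their span, which is possible because the rank is at least $3$. If instead $\phi(X),\phi(Y)$ are dependent, perturb $X,Y$ by elements giving independent images and use bilinearity, or equivalently note that the set of pairs with independent images is open and dense in $T_pM\times T_pM$ when $\rank\geq 2$. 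Continuity then gives $T=0$ at $p$. Finally, because $T$ vanishes on the dense set where $\rank\phi\geq3$, it vanishes everywhere, so \eqref{eq:mainthm1_condition2} holds.

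I expect the main obstacle to be the Bianchi computation: keeping the bookkeeping correct so that all derivatives of $\phi$ assemble into torsion expressions that vanish, and obtaining the clean cyclic identity with the right signs. If this gets messy, I would instead compute directly in a local frame, taking coordinate vector fields so that $[X,Y]=0$, and expand $\mathfrak{S}\,(\nabla^{\E}_X R)(Y,Z)\eta$ term by term.
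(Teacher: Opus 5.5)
Your proposal is correct and is essentially the paper's proof: Theorem~\ref{thm:condition of projectively flat on coherent tangent bundle ver3} is reduced to Theorem~\ref{thm:condition of projectively flat on coherent tangent bundle ver2} through Lemma~\ref{lem:Bianchi identity of coherent tangent bundle}, and the appendix proof of that lemma expands the second Bianchi identity $d\Omega^{j}_{i}=\Omega^{j}_{k}\wedge\omega^{k}_{i}-\omega^{j}_{k}\wedge\Omega^{k}_{i}$ in a local frame to reach exactly your cyclic identity $T(X,Y,\eta)\phi(Z)+T(Y,Z,\eta)\phi(X)+T(Z,X,\eta)\phi(Y)=0$, with your auxiliary-connection version being the coordinate-free form of the same computation. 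Your pointwise linear algebra is more careful than the appendix, which asserts that for arbitrary $X,Y$ there is a $Z_{p}$ with $\phi_{p}(X_{p}),\phi_{p}(Y_{p}),\phi_{p}(Z_{p})$ independent (false if, e.g., $X_{p}\in\ker\phi_{p}$); in fact, taking $\phi(Z)\notin\operatorname{span}\{\phi(X),\phi(Y)\}$, which is always possible when $\rank\phi_{p}\geq 3$, already forces $T(X,Y,\eta)=0$ in every case, so your density step is valid but unnecessary.
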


Theorem~\ref{thm:condition of projectively flat on coherent tangent bundle ver3} follows from the following lemma, which in turn follows from the second Bianchi identity.
The proof of the lemma is given in the appendix.

\begin{lem}
  \label{lem:Bianchi identity of coherent tangent bundle}
    Let $(\E,\phi,\nabla^{\E})$ be a coherent tangent bundle.
    We assume that $\rank \phi \ge 3$ almost everywhere, and there exists a tensor field $A\in\Gamma(T^{*}M\otimes\E^{*})$ such that \eqref{eq:mainthm1_condition1} holds.
    Then, \eqref{eq:mainthm1_condition2} follows from the second Bianchi identity.
\end{lem}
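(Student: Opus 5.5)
Write $\nabla = \nabla^{\E}$, $R = R^{\nabla^{\E}}$, and define the tensor
\[
B(X,Y,\eta) := XA(Y,\eta) - YA(X,\eta) - A([X,Y],\eta) - A(Y,\nabla_X\eta) + A(X,\nabla_Y\eta).
\]
This is the exterior covariant derivative $(d^{\nabla}A)(X,Y)(\eta)$ of $A$, viewed as an $\E^{*}$-valued $1$-form. It is tensorial and skew in $X,Y$, and the goal is to show $B=0$.

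\textbf{Step 1.} Use the second Bianchi identity for the connection $\nabla$ on the vector bundle $\E$. In the form $d^{\nabla}R = 0$, with $R$ an $\mathrm{End}(\E)$-valued $2$-form, it reads
\[
\mathfrak{S}_{X,Y,Z}\bigl\{(\nabla_X R)(Y,Z) - R([X,Y],Z)\bigr\}=0
\]
for coordinate-free vector fields. Substitute \eqref{eq:mainthm1_condition1}, which gives $R(Y,Z)\eta = \frac{1}{n-1}\{A(Z,\eta)\phi(Y)-A(Y,\eta)\phi(Z)\}$. When we differentiate covariantly, the derivatives falling on $\phi(Y),\phi(Z)$ produce terms of the form $\nabla_X\phi(Y)$. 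After the cyclic sum these combine through the relative torsion-freeness \eqref{dfn:relatively torsion-free} into $\phi([X,Y])$-type terms, and those cancel against the $R([X,Y],Z)$ terms. The derivatives falling on $A$ assemble into $B$. I expect the identity to collapse to
\[
B(X,Y,\eta)\phi(Z) + B(Y,Z,\eta)\phi(X) + B(Z,X,\eta)\phi(Y) = 0
\]
for all $X,Y,Z,\eta$. This step is the main computational obstacle: one must track the torsion terms carefully, since $\nabla$ acts on $\E$ and not on $TM$. The cleanest route is to work with $\E$-valued forms, where $d^{\nabla}\phi = 0$ is exactly relative torsion-freeness, and $R\wedge$-type identities give the result formally.

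\textbf{Step 2.} Do pointwise linear algebra on the dense set where $\rank\phi_p\ge 3$. Fix $\eta$ and such a $p$. Suppose $\phi(X),\phi(Y),\phi(Z)$ are linearly independent. Then the displayed identity forces $B(X,Y,\eta)=0$.

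Now let $X,Y$ be arbitrary. If $\phi(X),\phi(Y)$ are independent, choose $Z$ with $\phi(Z)$ outside their span, which is possible since the rank is at least $3$. This gives $B(X,Y,\eta)=0$.

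If $\phi(X),\phi(Y)$ are dependent, decompose instead. Pick $X_1,X_2,X_3$ with $\phi(X_i)$ independent, and write $X$ and $Y$ in terms of vectors adapted to this choice plus elements of $\ker\phi_p$. By bilinearity it then suffices to handle two cases: pairs with independent images, already done, and pairs involving some $K\in\ker\phi_p$.

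For $K\in\ker\phi_p$, take $Y,Z$ with $\phi(Y),\phi(Z)$ independent. Set $X=K$ in the identity. This yields $B(Y,Z,\eta)\phi(K)=0$ trivially, together with
\[
B(K,Y,\eta)\phi(Z)+B(Z,K,\eta)\phi(Y)=0.
\]
Independence of $\phi(Y),\phi(Z)$ gives $B(K,Y,\eta)=0$ whenever $\phi(Y)\neq0$. The remaining pairs have both images zero; handle them by linearity, writing $K' = (K'+Y)-Y$ with $\phi(Y)\ne0$.

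Hence $B=0$ on a dense set, and by continuity $B=0$ everywhere, which is \eqref{eq:mainthm1_condition2}.
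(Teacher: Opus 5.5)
Your proposal is correct and follows the paper's route. The second Bianchi identity yields exactly the cyclic identity $B(X,Y,\eta)\phi(Z)+B(Y,Z,\eta)\phi(X)+B(Z,X,\eta)\phi(Y)=0$; the paper writes that identity in a frame as $d\Omega_i^j=\Omega_k^j\wedge\omega_i^k-\omega_k^j\wedge\Omega_i^k$, and you write it invariantly as $d^{\nabla}R=0$. After that, both arguments use $\rank\phi\ge 3$ on a dense set. Your separate treatment of pairs with $\phi(X),\phi(Y)$ dependent, via kernel vectors and bilinearity, fills in a point the paper passes over: the paper simply asserts that for arbitrary $X,Y$ there is a $Z$ making $\phi(X),\phi(Y),\phi(Z)$ independent, which fails in that case.
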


\vskip\baselineskip
\vskip\baselineskip
\vskip\baselineskip
\section{Relations of affine immersion and quasi-Codazzi structure}
\label{sec:4}

\subsection{Affine immersion}
\label{sec:4-1}
In this section, we summarize the submanifold theory of affine differential geometry.
For more details, see \cite{Dajczer-Tojeiro, Matsuzoe, Nomizu-Sasaki}.

\begin{dfn}
  [\cite{Nomizu-Sasaki}]
  Let $f:M \rightarrow \R^{n+1}$ be an immersion and $\xi$ a transversal vector field along $f$ such that for each $p\in M$,
  \[
  T_{f(p)}\R^{n+1} = f_{*}(T_{p}M) \oplus \operatorname{span}\{ \xi_{p} \},
  \]
  where $\operatorname{span}\{ \xi_{p} \}$ is the $1$-dimensional subspace spanned by $\xi_{p}$.
  We call the pair $\{ f, \xi \}$ an {\em affine immersion}.
\end{dfn}

Let $D$ be the standard connection on $\R^{n+1}$.
Given an affine immersion $\{ f,\xi \}$, from the decomposition of the tangent space, the covariant derivatives are decomposed as follows:
\begin{align}
  D_{X}f_{*}(Y) &= f_{*}(\nabla _{X}Y) + h(X,Y)\xi, \label{eq:the Gauss formula}\\
  D_{X}\xi &= -f_{*}(S(X)) + \tau(X)\xi, \label{eq:the Weingarten formula}
\end{align}
where $X,Y\in\Gamma(TM)$, $\nabla$ is a torsion-free connection on $M$, $h$ is a symmetric $(0,2)$-tensor field, $S$ a $(1,1)$-tensor field and $\tau$ is a $1$-form.
The formula \eqref{eq:the Gauss formula} is called the {\em Gauss formula}, and the formula \eqref{eq:the Weingarten formula} is called the {\em Weingarten formula}.
We call $\nabla$ the {\em induced connection}, $h$ the {\em second fundamental form}, $S$ the {\em affine shape operator}, and $\tau$ the {\em transversal connection form} of $\{f,\xi\}$.
Moreover, these quantities satisfy the following fundamental equations.
\vskip\baselineskip
\noindent
The {\em Gauss equation}\;:
\begin{align}
  R^{\nabla}(X,Y)Z = h(Y,Z)S(X) - h(X,Z)S(Y).
  \label{eq:the Gauss equation}
\end{align}
The {\em Codazzi equation} (I)\;:
\begin{align}
  (\nabla_{X}h)(Y,Z) + \tau(X)h(Y,Z) = (\nabla_{Y}h)(X,Z) + \tau(Y)h(X,Z).
  \label{eq:the Codazzi equation (I)}
\end{align}
The {\em Codazzi equation} (II)\;:
\begin{align}
  (\nabla_{X}S)(Y) - \tau(X)S(Y)= (\nabla_{Y}S)(X) - \tau(Y)S(X).
  \label{eq:the Codazzi equation (II)}
\end{align}
The {\em Ricci equation}\;:
\begin{align}
  h(X,S(Y)) - h(S(X),Y) = (\nabla_{X}\tau)(Y) - (\nabla_{Y}\tau)(X) = d\tau(X,Y).
  \label{eq:the Ricci equation}
\end{align}
Given an affine immersion, the above fundamental equations hold.
Conversely, the following proposition holds.

\begin{prop}
  [\cite{DNV}]
  \label{prop:exist the affine immersion}
  Let $M$ be a simply connected manifold, $h$ a symmetric $(0,2)$-tensor field, $\nabla$ a torsion-free connection, $S$ a $(1,1)$-tensor field, and $\tau$ a 1-form on $M$.
  If those quantities satisfy \eqref{eq:the Gauss equation}, \eqref{eq:the Codazzi equation (I)}, \eqref{eq:the Codazzi equation (II)}, and \eqref{eq:the Ricci equation}, then there exists an affine immersion that induces them.
\end{prop}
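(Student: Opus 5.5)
The statement is Proposition~\ref{prop:exist the affine immersion}: for simply connected $M$, data satisfying \eqref{eq:the Gauss equation}--\eqref{eq:the Ricci equation} are induced by an affine immersion. I would prove it by the standard frame-bundle / integrability argument.

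\textbf{Step 1: the flat connection.} Set $\mathcal{V}:=TM\oplus\underline{\R}$, where $\underline{\R}$ is the trivial line bundle with unit section $\mathbf{1}$. On $\mathcal{V}$ define the connection $\til{D}$ by
\begin{align*}
\til{D}_{X}Y &= \nabla_{X}Y + h(X,Y)\mathbf{1}, \\
\til{D}_{X}\mathbf{1} &= -S(X) + \tau(X)\mathbf{1},
\end{align*}
mimicking \eqref{eq:the Gauss formula} and \eqref{eq:the Weingarten formula}.

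I then compute $R^{\til{D}}(X,Y)Y'$ and $R^{\til{D}}(X,Y)\mathbf{1}$ and split each into its $TM$-component and its $\mathbf{1}$-component.
\begin{itemize}
\item The $TM$-component of $R^{\til{D}}(X,Y)Y'$ is $R^{\nabla}(X,Y)Y' - h(Y,Y')S(X) + h(X,Y')S(Y)$, which vanishes by \eqref{eq:the Gauss equation}.
\item Its $\mathbf{1}$-component is the difference of the two sides of \eqref{eq:the Codazzi equation (I)}, which vanishes. Torsion-freeness of $\nabla$ is used here.
\item The $TM$-component of $R^{\til{D}}(X,Y)\mathbf{1}$ vanishes by \eqref{eq:the Codazzi equation (II)}.
\item Its $\mathbf{1}$-component vanishes by \eqref{eq:the Ricci equation}.
\end{itemize}
Hence $\til{D}$ is flat. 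This part is routine bookkeeping; the only care needed is with sign conventions.

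\textbf{Step 2: the frame.} Since $M$ is simply connected and $\til{D}$ is flat, there is a global $\til{D}$-parallel frame of $\mathcal{V}$. This gives a bundle isomorphism $\Psi:\mathcal{V}\to M\times\R^{n+1}$ carrying $\til{D}$ to the trivial connection, i.e., $D$ restricted along maps.

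\textbf{Step 3: constructing $f$.} Consider the $\R^{n+1}$-valued $1$-form $\alpha(X):=\Psi(X)$. Then
\[
d\alpha(X,Y)=\Psi\bigl(\til{D}_{X}Y-\til{D}_{Y}X-[X,Y]\bigr) = \Psi\bigl(h(X,Y)-h(Y,X)\bigr)\mathbf{1}=0,
\]
using torsion-freeness of $\nabla$ and symmetry of $h$. So $\alpha$ is closed, and by simple connectivity $\alpha=df$ for some $f:M\to\R^{n+1}$.

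Set $\xi:=\Psi(\mathbf{1})$. Because $\Psi$ is a fiberwise isomorphism, $f_{*}=\Psi|_{TM}$ is injective and $\xi$ is transversal, so $\{f,\xi\}$ is an affine immersion. Comparing $D_{X}f_{*}Y=\Psi(\til{D}_{X}Y)$ and $D_{X}\xi=\Psi(\til{D}_{X}\mathbf{1})$ with \eqref{eq:the Gauss formula} and \eqref{eq:the Weingarten formula} shows that $\{f,\xi\}$ induces exactly $\nabla$, $h$, $S$ and $\tau$.

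\textbf{Main obstacle.} The only real point is the curvature verification in Step 1, where each fundamental equation must be matched to exactly one component of $R^{\til{D}}$. Everything after that is the standard use of simple connectivity, once to get the global parallel frame and once to integrate the closed $1$-form.
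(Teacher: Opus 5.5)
Your proof is correct. The four components of $R^{\til{D}}$ reduce exactly to the Gauss, Codazzi (I), Codazzi (II) and Ricci equations. Torsion-freeness of $\nabla$ is also needed in the $TM$-component of $R^{\til{D}}(X,Y)\mathbf{1}$, and symmetry of $h$ in the Ricci component. The global parallel frame together with $H^{1}_{dR}(M)=0$ then yields $\{f,\xi\}$ satisfying the Gauss and Weingarten formulas with the prescribed data.

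The paper gives no proof of its own here; it simply cites Dillen--Nomizu--Vranken, and your argument is the standard proof of this fundamental theorem. Note that you never use nondegeneracy of $h$. That is exactly what justifies the paper's remark that the proposition remains valid for degenerate $h$, as required in the proof of Theorem~\ref{main_thm_2}.
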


\begin{exam}
  [\cite{Nomizu-Sasaki}]
  \label{exam:graph immersion}
  For a function $\phi \in C^{\infty}(\R^{n})$, set an immersion $f:\R^{n}\rightarrow \R^{n+1}$ by
  \begin{align*}
    \R^{n} \ni \bx = (x_{1},\ldots, x_{n}) \mapsto f(\bx) = (x_{1},\ldots,x_{n},\phi(\bx))\in \R^{n+1}
  \end{align*}
  and a transverse vector field $\xi = (0,\ldots,0,1) \in \R^{n+1}$.
  We can easily check $\{ f,\xi \}$ is an affine immersion and call $\{ f,\xi \}$ a {\em graph immersion} with respect to $\phi$.
  This affine immersion induces
  \begin{align*}
    h \left( \frac{\rd}{\rd x_{i}}, \frac{\rd}{\rd x_{j}} \right) = \frac{\rd^{2}\phi}{\rd x_{i} \rd x_{j}}, \quad \nabla_{\frac{\rd}{\rd x_{i}}} \frac{\rd}{\rd x_{j}} = 0, \quad S\equiv 0 \quad \tau \equiv 0.
  \end{align*}
\end{exam}

\begin{exam}
  [\cite{Nomizu-Sasaki}]
  \label{exam:centro affine immersion}
  Let $f:M\rightarrow \R^{n+1}$ be an immersion and $o$ the origin of $\R^{n+1}$.
  If a position vector $f(\bx) = \overrightarrow{of(\bx)}\in \R^{n+1}$ is transverse to $f(M)$ for $\bx \in M$, we call $\{f,-f\}$ a {\em centroaffine immersion}.
  This affine immersion induces the following relations for $X,Y\in\Gamma(TM)$\;:
  \begin{align*}
    h (X,Y) = \frac{1}{n-1} Ric^{\nabla}(X,Y), \quad S= id, \quad \tau \equiv 0,
  \end{align*}
  where $\nabla$ is an induced connection with respect to $\{f,\xi\}$.
\end{exam}

If $h$ is nondegenerate, $f$ is said to be {\em nondegenerate}.
It is independent from the choice of transverse vector fields.
We define a volume form $\theta$ on $M$ by
\begin{align*}
  \theta (X_{1},\dots,X_{n}) = \omega(X_{1},\dots,X_{n},\xi),
\end{align*}
where $\omega$ is the standard volume form on $\R^{n+1}$ and $X_{1},\dots,X_{n}\in\Gamma(TM)$.
Then $\nabla_{X}\theta = \tau(X)\theta$ holds.
If $\nabla \theta \equiv 0$ (i.e., $\tau \equiv 0$), then $\{ f,\xi \}$ is called an {\em equiaffine immersion}.
A nondegenerate equiaffine immersion $\{f,\xi\}$ induces a Codazzi structure since the Codazzi equation (I) holds.
The dual connection is obtained as follows.

Let $(\R^{n+1})^{*}$ be the dual space of $\R^{n+1}$.
We define the pairing $\langle \; , \; \rangle$ of $(\R^{n+1})^{*}$ and  $\R^{n+1}$ by  $\langle \alpha, A \rangle:= \alpha (A)$ for $\alpha \in (\R^{n+1})^{*}$ and $A\in \R^{n+1}$.
We define a {\em conormal map} $\nu:M \ni p \mapsto \nu_{p} \in  (\R^{n+1})^{*}$ with respect to an affine immersion $\{ f,\xi \}$ by
\[
\langle \nu(p), f_{*}X_{p} \rangle = 0, \qquad \langle \nu(p), \xi_{p} \rangle = 1
\]
for $X_{p} \in T_{p}M$.
Differentiating the above equations, we get
\begin{align}
 \langle \nu_{*}X, f_{*}Y \rangle = -h(X,Y), \qquad \langle \nu_{*}X,\xi \rangle = -\tau(X) \label{eq:property of the conormal map}
\end{align}
for $X,Y\in\Gamma(TM)$.
Hence, $\nu:M \rightarrow (\R^{n+1})^{*}$ is an immersion if $h$ is nondegenerate, and $\nu$ is transversal to $\nu(M)$ itself, i.e., $\{ \nu, -\nu \}$ is a centroaffine immersion.
Then, by the Gauss formula \eqref{eq:the Gauss formula}, we obtain an induced torsion-free connection $\nabla^{*}$ which satisfies
\[
Xh(Y,Z) = h(\nabla_{X}Y,Z) + h(Y,\nabla^{*}_{X}Z) + \tau(Z)h(X,Y)
\]
for $X,Y,Z\in\Gamma(TM)$.
Therefore, $(h,\nabla,\nabla^{*})$ is a Codazzi structure if $\tau \equiv 0$.
Hence, a nondegenerate equiaffine immersion $\{f,\xi\}$ induces a Codazzi structure.
The converse is known to hold by the following theorem.

\begin{thm}
  [\cite{DNV}]
  \label{thm:nondegenerate and equiaffine <=> Codazzi structure}
  Let $\{ f,\xi \}$ be a nondegenerate equiaffine immersion, $h$ the second fundamental form, and $\nabla$ the induced connection of $\{f,\xi\}$.
  In this case, $(h,\nabla,\nabla^{*})$ is a Codazzi structure and the dual connection $\nabla^{*}$ is a projectively flat connection with symmetric Ricci tensor field.
  Conversely, if $(h,\nabla,\nabla^{*})$ is a Codazzi structure on a simply connected manifold and the dual connection $\nabla^{*}$ is a projectively flat connection with symmetric Ricci tensor field, then there exists a nondegenerate equiaffine immersion that induces the Codazzi structure.
\end{thm}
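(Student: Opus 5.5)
For the forward direction I would work with the conormal map $\nu$ of the nondegenerate equiaffine immersion $\{f,\xi\}$. Since $\tau\equiv 0$, \eqref{eq:property of the conormal map} gives $\langle \nu_{*}X,\xi\rangle=0$ and $\langle\nu_{*}X,f_{*}Y\rangle=-h(X,Y)$. Nondegeneracy of $h$ makes $\nu$ an immersion, and $\{\nu,-\nu\}$ is then a centroaffine immersion. By the Gauss formula it induces a torsion-free connection $\nabla^{*}$, and the identity displayed before the theorem, with $\tau\equiv0$, shows that $\nabla^{*}$ is the $h$-dual of $\nabla$. Hence $(h,\nabla,\nabla^{*})$ is a Codazzi structure, with Codazzi (I) supplying the total symmetry.

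Next I would show that $\nabla^{*}$ is projectively flat with symmetric Ricci tensor. By Example~\ref{exam:centro affine immersion} applied to $\{\nu,-\nu\}$, its shape operator is the identity and its transversal form vanishes. The Gauss equation \eqref{eq:the Gauss equation} then gives
\[
R^{\nabla^{*}}(X,Y)Z=h^{*}(Y,Z)X-h^{*}(X,Z)Y,
\]
with $h^{*}$ the (symmetric) second fundamental form of $\nu$. Taking traces gives $Ric^{\nabla^{*}}=(n-1)h^{*}$, which is symmetric, and the curvature has exactly the form in Theorem~\ref{thm:condition of projectively flat}(ii) for $n\ge3$. For $n=2$ I would use Codazzi (I) for $\{\nu,-\nu\}$, namely $\nabla^{*}h^{*}$ totally symmetric (its $\tau$ is zero), so that $\nabla^{*}Ric^{\nabla^{*}}$ is totally symmetric. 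Theorem~\ref{thm:condition of projectively flat} then yields projective flatness.

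For the converse I would construct the data $(h,\nabla,S,\tau)$ and apply Proposition~\ref{prop:exist the affine immersion} with $\tau=0$. Since $\nabla^{*}$ is projectively flat with symmetric Ricci tensor, Theorem~\ref{thm:condition of projectively flat} (together with the remark for $n=2$) gives
\[
R^{\nabla^{*}}(X,Y)Z=\tfrac{1}{n-1}\{Ric^{\nabla^{*}}(Y,Z)X-Ric^{\nabla^{*}}(X,Z)Y\}.
\]
I would define $S$ by $h(S X,Y)=\frac{1}{n-1}Ric^{\nabla^{*}}(X,Y)$, using nondegeneracy of $h$. Duality gives $h(R^{\nabla}(X,Y)Z,W)=-h(Z,R^{\nabla^{*}}(X,Y)W)$. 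Substituting the formula above yields exactly the Gauss equation $R^{\nabla}(X,Y)Z=h(Y,Z)SX-h(X,Z)SY$.

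The remaining equations then have to be checked. Codazzi (I) with $\tau=0$ is the total symmetry of $\nabla h$ in the Codazzi structure. The Ricci equation with $\tau=0$ requires $h(X,SY)=h(SX,Y)$, which holds because $Ric^{\nabla^{*}}$ is symmetric. Codazzi (II) is $(\nabla_{X}S)Y=(\nabla_{Y}S)X$, and I expect it to be the main obstacle. Pairing with $h$ and using duality converts it into the condition that $(\nabla^{*}_{X}Ric^{\nabla^{*}})(Y,Z)$ is symmetric in $X,Y$. For $n=2$ this is the hypothesis of Theorem~\ref{thm:condition of projectively flat}. For $n\ge3$ it follows from the second Bianchi identity applied to the projectively flat curvature form, as noted in the remark following that theorem. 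The dualization needs care with index placement: differentiating $h(SX,Y)$, one must check that the $\nabla$ acting on $S$ and the $\nabla^{*}$ acting on $Ric^{\nabla^{*}}$ combine correctly through \eqref{def of dual connection}.

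Once all four equations hold, Proposition~\ref{prop:exist the affine immersion} on the simply connected $M$ gives an affine immersion with $\tau\equiv0$, that is, an equiaffine immersion. It is nondegenerate because $h$ is, and it induces $h$ and $\nabla$. Its dual connection is then $\nabla^{*}$, since the dual connection is unique for nondegenerate $h$.
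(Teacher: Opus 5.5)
Your proof is correct. The paper quotes this theorem from Dillen--Nomizu--Vranken and does not prove it directly. Your Codazzi-structure half matches the discussion preceding the statement.

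Your converse is the nondegenerate specialization of the proof of Theorem~\ref{main_thm_2}. Your $Ric^{\nabla^{*}}$ plays the role of $A$, your $S$ is \eqref{dfn:S}, and your Gauss, Ricci and Codazzi (II) checks are Lemmas~\ref{lem:1 to prove thm}, \ref{lem:3 to prove thm} and \ref{lem:2 to prove thm}. The index bookkeeping you flag works out: $\nabla^{*}_{X}Y-\nabla_{X}Y$ is symmetric in $X,Y$, so that term cancels after antisymmetrizing. The only variation is how you get the derivative condition. You take it from Weyl's theorem for $n=2$ and from the second Bianchi identity for $n\ge 3$, as in Lemma~\ref{lem:Bianchi identity of coherent tangent bundle}. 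The paper instead reads \eqref{eq:mainthm1_condition2} directly off the projective change $\rho$ in Proposition~\ref{thm:condition of projectively flat on coherent tangent bundle}, uniformly in $n$.

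The genuine difference is the projective flatness in the forward direction. You realize $\nabla^{*}$ as the induced connection of the centroaffine immersion $\{\nu,-\nu\}$. You then get the curvature form, and for $n=2$ the symmetry of $\nabla^{*}Ric^{\nabla^{*}}$, from that immersion's own Gauss and Codazzi (I) equations, and conclude with Theorem~\ref{thm:condition of projectively flat}. The paper's Theorem~\ref{main_thm_1} instead transports the Gauss, Codazzi (II) and Ricci equations of $\{f,\xi\}$ itself across the duality; in your notation, $h^{*}(X,Y)=h(SX,Y)$. It then applies its Frobenius-type criterion.

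Your route is shorter and explains geometrically why $\nabla^{*}$ is projectively flat. However, it needs $\nu$ to be an immersion, that is, $h$ nondegenerate. The paper's route only uses $-\nu_{*}$ as a bundle map, which is exactly what lets it extend to degenerate $h$, with $\nabla^{-}$ on $E^{-}$ replacing $\nabla^{*}$.
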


\begin{exam}
  Let $\{f:\R^{2}\rightarrow \R^{3},\xi\}$ be a graph immersion with respect to $\phi(x_{1},x_{2})= \frac{1}{2}(x_{1}^{2}+x_{2}^{2})$.
  The second fundamental form $h$ is
  \[
  h \left( \frac{\rd}{\rd x_{i}}, \frac{\rd}{\rd x_{j}} \right) = \delta_{ij},
  \]
  where $\delta_{ij}$ is the Kronecker delta for $i,j=1,2$.
  Therefore, since $h$ is nondegenerate, the graph immersion $\{f,\xi\}$ with respect to $\phi$ induces a Hessian structure.
\end{exam}

\begin{exam}
  We set an open disk $D_{1}=\{ (x_{1},x_{2})\in\R^{2} \big| x_{1}^{2}+x_{2}^{2} <1 \}$ and $f:D_{1}\ni (x_{1},x_{2}) \mapsto (x_{1},x_{2},x_{1}^{2} + x_{2}^{2} +1 )\in\R^{3}$.
  Then $\{f,-f\}$ is a centroaffine immersion and we have
  \begin{align*}
    h\left( \frac{\rd}{\rd x_{i}}, \frac{\rd}{\rd x_{j}} \right) = \frac{2 \delta_{ij}}{1 - x_{1}^{2} - x_{2}^{2}}, \qquad \nabla_{\frac{\rd}{\rd x_{i}}}\frac{\rd}{\rd x_{j}} = \frac{2\delta_{ij}}{1- x_{1}^{2} - x_{2}^{2}} \left( x_{1} \frac{\rd}{\rd x_{1}} + x_{2} \frac{\rd}{\rd x_{2}} \right)
  \end{align*}
  for $i,j=1,2$.
  Since $h$ is nondegenerate, the centroaffine immersion induces a Codazzi structure.
\end{exam}

\begin{dfn}
  [\cite{Kurose}]
  Let $\{ f,\xi \}$ be an affine immersion and $\nu$ a conormal map of $\{ f,\xi \}$.
  We define a function $\rho^{G}$ on $M\times M$ by
  \[
  \rho^{G}(p,q) := \langle \nu(q),f(p)-f(q)\rangle
  \]
  for every $p,q\in M$.
  We call $\rho^{G}$ the {\em geometric divergence} of $\{f,\xi\}$.
\end{dfn}

In the original paper \cite{Kurose}, the geometric divergence is defined for a nondegenerate equiaffine immersion.
However, it can also be defined for any affine immersion.

By straightforward calculations, we can check the geometric divergence of a nondegenerate equiaffine immersion is a contrast function of the induced Codazzi structure.

Proposition~\ref{prop:exist the affine immersion} is used in the proof of Theorem~\ref{thm:nondegenerate and equiaffine <=> Codazzi structure}.
We note that Proposition~\ref{prop:exist the affine immersion} still holds when a second fundamental form $h$ is degenerate.
Moreover, a conormal map $\nu$ and a geometric divergence $\rho^{G}$ exist even if $h$ is degenerate.
In Section~\ref{sec:4-2}, we generalize Theorem~\ref{thm:nondegenerate and equiaffine <=> Codazzi structure} under the weaker assumption that $h$ is not necessarily nondegenerate, and we also investigate the relationship between quasi-Codazzi structures and geometric divergences induced by equiaffine immersions.

\vskip\baselineskip
\subsection{Quasi-Codazzi structure on equiaffine immersion}
\label{sec:4-2}

Let $\{ f:M\rightarrow \R^{n+1}, \xi \}$ be an equiaffine immersion and $\nu$ the conormal map of $\{ f,\xi \}$.
Since $\nu$ is not necessarily an immersion, a dual connection cannot be defined.
Nevertheless, we show that there exists a dual connection in the sense of Definition~\ref{dfn:dual connection}, and a quasi-Codazzi structure is induced by $\{ f,\xi \}$ in this situation.

Let a torsion-free connection $\nabla$ and a symmetric $(0,2)$-tensor field $h$ be defined by  the Gauss formula \eqref{eq:the Gauss formula} of $\{ f,\xi \}$.
Set a vector space $E_{p}$ for $p\in M$,
\[
E_{p} := E^{+}_{p} \oplus E^{-}_{p} := f_{*}(T_{p}M) \oplus (f_{*}(T_{p}M))^{*} \subset T_{f(p)}\R^{n+1} \oplus T_{f(p)}(\R^{n+1})^{*}
\]
and a rank $2n$ vector bundle $E:= \bigcup_{p\in M} E_{p}$ on $M$.
We define a metric $\theta$ and a $(1,1)$-tensor field $I$ on $E$ by
\begin{align*}
  \theta_{p}(\eta^{+}_{p} \oplus \eta^{-}_{p}, \zeta^{+}_{p}\oplus \zeta^{-}_{p}) &:= \frac{1}{2} \{ \langle \zeta^{-}_{p}, \eta^{+}_{p} \rangle + \langle \eta^{-}_{p}, \zeta^{+}_{p} \rangle \},\\
  I_{p} (\eta^{+}_{p}\oplus \eta^{-}_{p}) &:= \eta^{+}_{p} \oplus - \eta^{-}_{p}
\end{align*}
for $p\in M$ and $\eta^{+}_{p}\oplus \eta^{-}_{p}, \zeta^{+}_{p}\oplus\zeta^{-}_{p} \in E_{p}$, where $\langle \eta^{-}_{p} , \eta^{+}_{p} \rangle = \eta^{-}_{p}(\eta^{+}_{p})$.
We can easily check $(E,\theta,I)$ is a para-Hermitian vector bundle on $M$.
We set a connection $\nabla^{+}$ on $E^{+}$ by
\[
\nabla^{+}_{X}f_{*}Y = f_{*}(\nabla_{X}Y),
\]
a connection $\nabla^{-}$ on $E^{-}$ by
\[
X\theta(\eta^{+},\zeta^{-}) = \theta(\nabla^{+}_{X}\eta^{+},\zeta^{-}) + \theta(\eta^{+},\nabla^{-}_{X}\zeta^{-}),
\]
and a bundle map $\Phi:TM\rightarrow E=E^{+}\oplus E^{-}$ by
\[
\Phi(X) = f_{*}X \oplus -\nu_{*}X \in \Gamma (E^{+}\oplus E^{-})
\]
for $X,Y\in\Gamma(TM)$, $\eta^{+}\in \Gamma(E^{+})$ and $\zeta^{-}\in \Gamma(E^{-})$.
By the condition \eqref{eq:property of the conormal map}, we get
\begin{align*}
  \theta (\Phi(X),\Phi(Y)) &= \theta(f_{*}X \oplus -\nu_{*}X, f_{*}Y\oplus -\nu_{*}Y)\\
  & = \frac{1}{2} \{ \langle-\nu_{*}Y,f_{*}X \rangle + \langle-\nu_{*}X,f_{*}Y \rangle \}\\
  &=\frac{1}{2} \{ h(Y,X) + h(X,Y) \}\\
  &= h(X,Y).
\end{align*}
Let us prove that $(h,(E,\theta,I),\Phi,\nabla^{+},\nabla^{-})$ is a quasi-Codazzi structure on $M$, namely, $\nabla^{+}$ and $\nabla^{-}$ are relatively torsion-free with respect to $f_{*}$ and $- \nu_{*}$.
Clearly, $\nabla^{+}$ is relatively torsion-free with respect to $f_{*}$ since $\nabla$ is torsion-free.
We consider the $(0,3)$-tensor field $C$ defined by \eqref{dfn:cubic tensor of quasi-Codazzi}:
\[
C(X,Y,Z) = -2 \{ \theta(\nabla^{+}_{X}f_{*}Y, -\nu_{*}Z) - \theta(f_{*}Z, \nabla^{-}_{X} (-\nu_{*}Y)) \}.
\]
The $(0,3)$-tensor field $C$ coincides with $\nabla h$.
Indeed, it holds that
\begin{align*}
  \nabla_{X}h(Y,Z) &= Xh(Y,Z) - h(\nabla_{X}Y,Z) - h(Y,\nabla_{X}Z)\\
  &= 2\{ X\theta(f_{*}Y,-\nu_{*}Z) - \theta(f_{*}(\nabla_{X}Y),-\nu_{*}Z) -\theta(f_{*}(\nabla_{X}Z),-\nu_{*}Y) \}\\
  &= 2\{ X\theta(f_{*}Y,-\nu_{*}Z) -\theta(\nabla^{+}_{X}f_{*}Y,-\nu_{*}Z) -\theta(\nabla^{+}_{X}f_{*}Z,-\nu_{*}Y) \}\\
  &= 2\{ \theta(f_{*}Z,\nabla^{-}_{X}(-\nu_{*}Y))-\theta(\nabla^{+}_{X}f_{*}Y,-\nu_{*}Z) \}\\
  &= C(X,Y,Z).
\end{align*}
Then $C$ is totally symmetric by the Codazzi equation \eqref{eq:the Codazzi equation (I)} and the assumption that $\{f,\xi\}$ is equiaffine.
By Proposition~\ref{prop of quasi-Codazzi str}, $\nabla^{-}$ is relatively torsion-free with respect to $-\nu_{*}$.
Hence, $(h,(E,\theta,I),\Phi,\nabla^{+},\nabla^{-})$ is a quasi-Codazzi structure on $M$.
Therefore, an equiaffine immersion induces the quasi-Codazzi structure $(h,(E,\theta,I),\Phi,\nabla^{+},\nabla^{-})$, which we call the {\it induced quasi-Codazzi structure} of $\{f,\xi\}$.

\begin{thm}\upshape
  \label{main_thm_1}
  The quasi-Codazzi structure $(h,(E,\theta,I),\Phi,\nabla^{+},\nabla^{-})$ induced by an equiaffine immersion $\{f,\xi\}$ satisfies that $\nabla^{-}$ is projectively flat and $\tr\{ \eta^{-} \mapsto R^{\nabla^{-}}(X,Y)\eta^{-} \} = 0$ for $X,Y\in\Gamma(TM)$ and $\eta^{-}\in\Gamma(E^{-})$.
\end{thm}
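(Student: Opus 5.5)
We want to realize $\nabla^{-}$ as a connection induced from the flat connection $D$ on $(\R^{n+1})^{*}$, in the same way that $\nabla$ is induced by the Gauss formula. The curvature of $\nabla^{-}$ should then come out in the form \eqref{eq:mainthm1_condition1}, the compatibility condition \eqref{eq:mainthm1_condition2} as the transversal part of $R^{D}=0$, and the trace condition from the Ricci equation. With these in hand, Proposition~\ref{thm:condition of projectively flat on coherent tangent bundle} ((ii)$\Rightarrow$(i)) gives projective flatness. No rank assumption on $-\nu_{*}$ is needed.

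\emph{Identify $E^{-}$ and derive a Gauss formula.} Identify $E^{-}_{p}=(f_{*}T_{p}M)^{*}$ with the subspace $\{\zeta\in(\R^{n+1})^{*} : \langle\zeta,\xi_{p}\rangle=0\}$ by extending covectors by zero on $\xi_{p}$. This is compatible with $\Phi$: by \eqref{eq:property of the conormal map} and $\tau\equiv0$ we have $\langle\nu_{*}X,\xi\rangle=0$, so $-\nu_{*}X\in E^{-}$. Since $\langle\nu,\xi\rangle=1$, we get the splitting $(\R^{n+1})^{*}=E^{-}_{p}\oplus\operatorname{span}\{\nu(p)\}$. For $\zeta\in\Gamma(E^{-})$, write $D_{X}\zeta$ in this splitting. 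Pair with $f_{*}Y$ and use \eqref{eq:the Gauss formula} together with $\langle\zeta,\xi\rangle=0$; this shows that the $E^{-}$-component satisfies exactly the duality relation defining $\nabla^{-}$. Pair with $\xi$ and use \eqref{eq:the Weingarten formula} with $\tau=0$; this gives the $\nu$-component $a(X,\zeta):=\langle\zeta,f_{*}S(X)\rangle$. Hence
\[
D_{X}\zeta=\nabla^{-}_{X}\zeta+a(X,\zeta)\,\nu .
\]

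\emph{Curvature and the compatibility condition.} Expand $0=D_{X}D_{Y}\zeta-D_{Y}D_{X}\zeta-D_{[X,Y]}\zeta$ using $D_{X}\nu=\nu_{*}X\in E^{-}$. The $E^{-}$-part gives
\[
R^{\nabla^{-}}(X,Y)\zeta=a(Y,\zeta)\Phi^{-}(X)-a(X,\zeta)\Phi^{-}(Y),\qquad \Phi^{-}=-\nu_{*},
\]
which is \eqref{eq:mainthm1_condition1} with $A:=(n-1)a\in\Gamma(T^{*}M\otimes(E^{-})^{*})$, defined globally. The $\nu$-part gives
\[
Xa(Y,\zeta)-Ya(X,\zeta)-a([X,Y],\zeta)+a(X,\nabla^{-}_{Y}\zeta)-a(Y,\nabla^{-}_{X}\zeta)=0,
\]
which is \eqref{eq:mainthm1_condition2}.

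\emph{Trace condition.} Taking the trace in $\zeta$ yields
\[
a(Y,\Phi^{-}(X))-a(X,\Phi^{-}(Y))=-\langle\nu_{*}X,f_{*}SY\rangle+\langle\nu_{*}Y,f_{*}SX\rangle=h(X,SY)-h(Y,SX).
\]
By the Ricci equation \eqref{eq:the Ricci equation} this equals $d\tau(X,Y)$, which vanishes since $\tau\equiv0$. Thus $\tr\{\eta^{-}\mapsto R^{\nabla^{-}}(X,Y)\eta^{-}\}=0$.

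\emph{Conclusion.} We already know that $\nabla^{-}$ is relatively torsion-free with respect to $-\nu_{*}$. Proposition~\ref{thm:condition of projectively flat on coherent tangent bundle} now applies on every neighbourhood and gives projective flatness of $\nabla^{-}$. The main obstacle I expect is the first step: checking carefully that the abstract dual $\nabla^{-}$ really coincides with the $E^{-}$-component of $D$ under the identification $E^{-}\subset(\R^{n+1})^{*}$. This is exactly where equiaffineness ($\tau=0$) is used, namely to keep $\nu_{*}X$ and the annihilator of $\xi$ consistent. Once that is settled, the rest is bookkeeping with signs, since $\Phi^{-}=-\nu_{*}$.
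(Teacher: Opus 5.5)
Your argument is correct. It ends where the paper ends: with the same tensor $A(X,\zeta)=(n-1)\langle\zeta,f_{*}S(X)\rangle$, which is exactly the $A$ of \eqref{dfn:S} because $2\theta(f_{*}S(X),\zeta)=\langle\zeta,f_{*}S(X)\rangle$. Your trace computation from the Ricci equation is the same as Lemma~\ref{lem:3 to prove thm}, and you then apply Proposition~\ref{thm:condition of projectively flat on coherent tangent bundle} with no rank hypothesis, as the paper does. The difference lies in how you obtain \eqref{eq:mainthm1_condition1} and \eqref{eq:mainthm1_condition2}. The paper stays intrinsic: it takes the Gauss equation and Codazzi equation (II) of $\{f,\xi\}$ as given. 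It then translates them through Lemmas~\ref{lem:1 to prove thm} and \ref{lem:2 to prove thm}, using the duality of $\nabla^{\pm}$, in particular $\theta(R^{\nabla^{+}}(X,Y)\eta^{+},\eta^{-})=-\theta(\eta^{+},R^{\nabla^{-}}(X,Y)\eta^{-})$. You instead work extrinsically on the dual side. You identify $E^{-}$ with the annihilator of $\xi$ and derive the conormal Gauss formula $D_{X}\zeta=\nabla^{-}_{X}\zeta+a(X,\zeta)\nu$. Both conditions then appear at once as the $E^{-}$- and $\nu$-components of $R^{D}=0$. This shows where $A$ comes from: up to the factor $n-1$, it is the second fundamental form of this conormal decomposition. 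That is the degenerate analogue of $\nabla^{*}$ being induced by the centroaffine immersion $\{\nu,-\nu\}$, and your route never needs $\nu$ to be an immersion or the curvature-duality identity. The paper's route has a different advantage. Its lemmas are equivalences, so together with Proposition~\ref{prop:exist the affine immersion} the same lemmas also prove the converse, Theorem~\ref{main_thm_2}. Your computation proves only the direction asked here.
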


Conversely, we can show that if a quasi-Codazzi structure satisfies certain additional conditions, there exists an equiaffine immersion such that it induces the quasi-Codazzi structure.

\begin{thm}\upshape
  \label{main_thm_2}
  Let $(h,(E,\theta,I),\Phi,\nabla^{+},\nabla^{-})$ be a quasi-Codazzi structure on a simply connected manifold.
  Suppose that $\Phi^{+}$ is a bundle isomorphism and $\rank \Phi^{-} \geq 2$ almost everywhere.
  If $\nabla^{-}$ is projectively flat and $\tr\{\eta^{-} \mapsto R^{\nabla^{-}}(X,Y)\eta^{-}\} = 0$ for $X,Y\in\Gamma(TM)$ and $\eta^{-}\in\Gamma(E^{-})$, then there exists an equiaffine immersion such that the induced quasi-Codazzi structure is isomorphic to the given one.
\end{thm}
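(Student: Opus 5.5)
Given the quasi-Codazzi structure with $\Phi^{+}$ an isomorphism, I will first transport everything to $TM$. Define the torsion-free connection $\nabla$ on $M$ by $\Phi^{+}(\nabla_{X}Y)=\nabla^{+}_{X}\Phi^{+}(Y)$, as in Proposition~\ref{prop of quasi-Codazzi str}. By that proposition, $\nabla h = C$ is totally symmetric, which is Codazzi equation (I) with $\tau\equiv 0$. The goal is then to produce a $(1,1)$-tensor $S$ such that the Gauss equation, Codazzi (II) and Ricci equations hold with $\tau=0$. After that, Proposition~\ref{prop:exist the affine immersion} (valid for degenerate $h$, as remarked in \S\ref{sec:4-1}) gives an equiaffine immersion $\{f,\xi\}$ inducing $(h,\nabla)$ on the simply connected $M$.

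To find $S$, I use projective flatness of $\nabla^{-}$. Since $\rank\Phi^{-}\ge 2$ almost everywhere and the trace condition holds, Theorem~\ref{thm:condition of projectively flat on coherent tangent bundle ver2} applied to $(E^{-},\Phi^{-},\nabla^{-})$ gives a global $A\in\Gamma(T^{*}M\otimes (E^{-})^{*})$ satisfying \eqref{eq:mainthm1_condition1} and \eqref{eq:mainthm1_condition2}. The pairing $\theta$ identifies $(E^{-})^{*}$ with $E^{+}$, and $\Phi^{+}$ identifies $E^{+}$ with $TM$. So I define $S$ by
\[
A(X,\eta^{-}) = -2(n-1)\,\theta(\Phi^{+}(S X),\eta^{-}),
\]
with the sign and constant to be fixed by the computation below.

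The key step is to relate curvatures by duality. Mutual duality gives
\[
\theta(R^{\nabla^{+}}(X,Y)\eta^{+},\eta^{-}) = -\theta(\eta^{+},R^{\nabla^{-}}(X,Y)\eta^{-}).
\]
Insert \eqref{eq:mainthm1_condition1} and $h(X,Z)=2\theta(\Phi^{+}Z,\Phi^{-}X)$, and take $\eta^{+}=\Phi^{+}Z$. The right-hand side becomes a combination of $h(X,Z)\theta(\Phi^{+}SY,\eta^{-})$ and $h(Y,Z)\theta(\Phi^{+}SX,\eta^{-})$. Since $\eta^{-}$ is arbitrary and $\theta$ pairs $E^{+},E^{-}$ nondegenerately, this yields exactly the Gauss equation $R^{\nabla}(X,Y)Z=h(Y,Z)SX-h(X,Z)SY$ once the constant is chosen correctly.

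The Ricci equation with $\tau=0$ is $h(X,SY)=h(SX,Y)$. This is equivalent to \eqref{eq:A-sym}, $A(X,\Phi^{-}Y)=A(Y,\Phi^{-}X)$, which the trace-free hypothesis gives in the proof of Proposition~\ref{thm:condition of projectively flat on coherent tangent bundle}. Codazzi (II) should be the translation of \eqref{eq:mainthm1_condition2}. Writing $A(X,\eta^{-})=c\,\theta(\Phi^{+}SX,\eta^{-})$ and differentiating with duality gives
\[
XA(Y,\eta^{-})-A(Y,\nabla^{-}_{X}\eta^{-}) = c\,\theta(\Phi^{+}(\nabla_{X}(SY)),\eta^{-}).
\]
Then \eqref{eq:mainthm1_condition2} becomes $\nabla_{X}(SY)-\nabla_{Y}(SX)-S[X,Y]=0$, i.e.\ $(\nabla_{X}S)Y=(\nabla_{Y}S)X$.

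Finally I must show that the induced quasi-Codazzi structure of $\{f,\xi\}$ is isomorphic to the given one, via Definition~\ref{def of isomorphic}. I define $\F$ on $E^{+}$ by $\Phi^{+}(X)\mapsto f_{*}X$. On $E^{-}$ I take $\F$ to be the adjoint-inverse with respect to the two pairings $\theta$. This forces (ii) and (iii), and also (iv) for $\nabla^{+}$ by construction. For $\nabla^{-}$, (iv) follows because both $\nabla^{-}$'s are the duals of corresponding $\nabla^{+}$'s. The main obstacle is condition (i) on the minus part, $\F\circ\Phi^{-}=-\nu_{*}$, i.e.\ $\theta(\Phi^{+}Z,\Phi^{-}X)=\tfrac12\langle-\nu_{*}X,f_{*}Z\rangle$. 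This holds because both sides equal $h(X,Z)/2$ by \eqref{eq:property of the conormal map}, and because $-\nu_{*}X$ is determined by its values on $f_{*}TM$ inside $(f_{*}T_pM)^{*}$. The care needed here is to confirm that $E^{-}$ in the induced structure really is $(f_{*}T_{p}M)^{*}$, so that no information lies in the $\xi$-direction; $\tau=0$ ensures $\langle\nu_{*}X,\xi\rangle=0$. I also expect to spend effort pinning down the sign and normalisation constants consistently between $A$ and $S$.
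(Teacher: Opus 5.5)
Your proposal is correct and follows the paper's proof essentially step for step: a global $A$ from Theorem~\ref{thm:condition of projectively flat on coherent tangent bundle ver2}, $S$ defined through $\theta$ and $\Phi^{+}$, the Gauss, Codazzi~(II) and Ricci equations obtained as in Lemmas~\ref{lem:1 to prove thm}--\ref{lem:3 to prove thm}, Codazzi~(I) from $C=\nabla h$, then Proposition~\ref{prop:exist the affine immersion}, and finally the isomorphism $\F=\F^{+}\oplus\F^{-}$ with $\F^{-}$ the $\theta$-adjoint inverse of $\F^{+}$. The Gauss computation fixes the sign as $A(X,\eta^{-})=+2(n-1)\,\theta(\Phi^{+}(SX),\eta^{-})$, as in \eqref{dfn:S}, and your explicit check of $\F^{-}\circ\Phi^{-}=-\nu_{*}$ (including the observation that $\tau\equiv 0$ places $\nu_{*}X$ in the annihilator of $\xi$) supplies what the paper leaves as ``a direct computation.''
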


In order to prove Theorems~\ref{main_thm_1} and \ref{main_thm_2}, we begin with some lemmas.
Let $(h,(E,\theta,I),\Phi,\nabla^{+},\nabla^{-})$ be a quasi-Codazzi structure with an isomorphism $\Phi^{+}$ and $\nabla$ a connection on $M$ defined by $\Phi^{+}(\nabla_{X}Y)=\nabla^{+}_{X}\Phi^{+}(Y)$ for $X,Y\in\Gamma(TM)$.
We take a tensor field $A\in\Gamma(T^{*}M\otimes E^{-})$ and a $(1,1)$-tensor field $S\in\Gamma(TM\otimes T^{*}M)$ satisfying
\begin{align}
  \label{dfn:S}
  2\theta(\Phi^{+}(S(X)),\eta^{-}) = \frac{1}{n-1}A(X,\eta^{-})
\end{align}
for $X\in\Gamma(TM)$ and $\eta\in\Gamma(E^{-})$.

\begin{lem}
  \label{lem:1 to prove thm}
  The following conditions are equivalent:
  \begin{itemize}
      \item[(i)]
          For $X,Y\in\Gamma(TM)$ and $\eta^{-}\in\Gamma(E^{-})$,
          \begin{align}
            \label{lem1:condition1}
            R^{\nabla^{-}}(X,Y)\eta^{-} = \frac{1}{n-1} \{ A(Y,\eta^{-})\Phi^{-}(X) - A(X,\eta^{-})\Phi^{-}(Y) \}.
          \end{align}
      \item[(ii)]
          For $X,Y,Z\in\Gamma(TM)$,
          \begin{align*}
            R^{\nabla}(X,Y)Z = h(Y,Z)S(X) - h(X,Z)S(Y).
          \end{align*}
  \end{itemize}
\end{lem}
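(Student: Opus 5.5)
The key point is that the two curvatures are dual to each other through the pairing $\theta$. Since $\nabla^{+}$ and $\nabla^{-}$ are mutually dual (Definition~\ref{dfn:dual connection}), their curvature tensors satisfy
\[
\theta(R^{\nabla^{+}}(X,Y)\eta^{+},\eta^{-}) + \theta(\eta^{+},R^{\nabla^{-}}(X,Y)\eta^{-}) = 0
\]
for $\eta^{+}\in\Gamma(E^{+})$ and $\eta^{-}\in\Gamma(E^{-})$. To see this, I would differentiate the duality relation $X\theta(\eta^{+},\eta^{-}) = \theta(\nabla^{+}_{X}\eta^{+},\eta^{-}) + \theta(\eta^{+},\nabla^{-}_{X}\eta^{-})$ twice and antisymmetrize. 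The terms involving the bracket cancel, because $XY-YX-[X,Y]$ kills the function $\theta(\eta^{+},\eta^{-})$. The mixed first-order terms cancel as well, exactly as in the classical statement that dual connections have opposite (adjoint) curvatures.

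Because $\Phi^{+}$ is a bundle isomorphism and $\Phi^{+}(\nabla_{X}Y)=\nabla^{+}_{X}\Phi^{+}(Y)$, we also have $\Phi^{+}(R^{\nabla}(X,Y)Z)=R^{\nabla^{+}}(X,Y)\Phi^{+}(Z)$. Moreover, $\theta$ restricted to $E^{+}\times E^{-}$ is a nondegenerate pairing: $\theta$ itself is nondegenerate and, by condition (iii) of Definition~\ref{dfn:para-Hermitian vector bundle}, $\theta$ vanishes on $E^{\pm}\times E^{\pm}$. Hence every $Z\in T_pM$ and every $\eta^{-}\in E^{-}_{p}$ are detected by this pairing. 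Consequently, (ii) holds if and only if, for all $Z$ and $\eta^{-}$,
\[
\theta(\Phi^{+}(R^{\nabla}(X,Y)Z),\eta^{-}) = h(Y,Z)\,\theta(\Phi^{+}S(X),\eta^{-}) - h(X,Z)\,\theta(\Phi^{+}S(Y),\eta^{-}).
\]

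Now I would rewrite both sides of this identity. By the curvature duality, the left-hand side equals $-\theta(\Phi^{+}(Z),R^{\nabla^{-}}(X,Y)\eta^{-})$. For the right-hand side, \eqref{def:degenerate metric} gives $h(Y,Z)=2\theta(\Phi^{+}(Z),\Phi^{-}(Y))$, using the symmetry of $h$. Together with the definition \eqref{dfn:S} of $S$, the right-hand side becomes
\[
\frac{1}{n-1}\bigl\{A(X,\eta^{-})\,\theta(\Phi^{+}(Z),\Phi^{-}(Y)) - A(Y,\eta^{-})\,\theta(\Phi^{+}(Z),\Phi^{-}(X))\bigr\},
\]
where the factors of $2$ cancel. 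Thus (ii) is equivalent to
\[
\theta\Bigl(\Phi^{+}(Z),\,R^{\nabla^{-}}(X,Y)\eta^{-} - \tfrac{1}{n-1}\{A(Y,\eta^{-})\Phi^{-}(X)-A(X,\eta^{-})\Phi^{-}(Y)\}\Bigr)=0
\]
for all $Z$. Since $\Phi^{+}$ is surjective onto $E^{+}$ and the pairing between $E^{+}$ and $E^{-}$ is nondegenerate, this is equivalent to (i).

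The only step needing care is the curvature duality identity together with the sign bookkeeping that follows. The nondegeneracy of the $E^{+}$–$E^{-}$ pairing is the reason the argument requires $\Phi^{+}$ to be an isomorphism, but it places no rank condition on $\Phi^{-}$.
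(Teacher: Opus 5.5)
Your proof is correct and follows essentially the same route as the paper. You pair $\Phi^{+}(R^{\nabla}(X,Y)Z)$ with $\eta^{-}$, move to $-\theta(\Phi^{+}(Z),R^{\nabla^{-}}(X,Y)\eta^{-})$ by curvature duality (which the paper cites from \cite{Kayo} and you derive directly), and substitute \eqref{def:degenerate metric} and \eqref{dfn:S}; you also make explicit the nondegeneracy of the $E^{+}$--$E^{-}$ pairing, which justifies the ``and vice versa'' step that the paper leaves implicit.
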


\proof
Suppose that the condition $\text{(i)}$ holds.
We have
\begin{align*}
  \theta(\Phi^{+}(&R^{\nabla}(X,Y)Z), \eta^{-})\\
  &= \theta(R^{\nabla^{+}}(X,Y)\Phi^{+}(Z),\eta^{-})\\
  &= -\theta(\Phi^{+}(Z), \; R^{\nabla^{-}}(X,Y)\eta^{-})\\
  &= -\theta(\Phi^{+}(Z),\; 2\theta( \Phi^{+}(S(Y)),\eta^{-} )\Phi^{-}(X) - 2\theta(\Phi^{+}(S(X)), \eta^{-} )\Phi^{-}(Y)  )\\
  &= -2\theta(\Phi^{+}(Z),\Phi^{-}(X)) \theta(\Phi^{+}(S(Y)),\eta^{-})\\
  &\quad\quad\quad\quad\quad\quad\quad\quad + 2\theta(\Phi^{+}(Z),\Phi^{-}(Y)) \theta(\Phi^{+}(S(X)),\eta^{-})\\
  &= -h(Z,X)\theta(\Phi^{+}(S(Y)),\eta^{-}) + h(Z,Y)\theta(\Phi^{+}(S(X)),\eta^{-})\\
  &= \theta(\;\Phi^{+}(h(Y,Z)S(X) - h(X,Z)S(Y)),\;\eta^{-} ).
\end{align*}
The second equality follows from the properties of the dual connection (see Proposition~3.16 in \cite{Kayo}).
Therefore, the condition $\text{(i)}$ implies $\text{(ii)}$, and vice versa.
\qed

\begin{lem}
  \label{lem:2 to prove thm}
  The following conditions are equivalent:
  \begin{itemize}
      \item[(i)]
          For $X,Y\in\Gamma(TM)$ and $\eta^{-}\in\Gamma(E^{-})$,
          \begin{align}
            \label{lem2:condition1}
            XA(Y,\eta^{-}) - YA(X,\eta^{-}) - A([X,Y],\eta^{-}) = A(Y,\nabla^{-}_{X}\eta^{-}) - A(X,\nabla^{-}_{Y}\eta^{-}).
          \end{align}
      \item[(ii)]
          For $X,Y,Z\in\Gamma(TM)$,
          \begin{align*}
            (\nabla_{X}S)(Y) = (\nabla_{Y}S)(X).
          \end{align*}
  \end{itemize}
\end{lem}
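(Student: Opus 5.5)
Throughout, $A$ and $S$ are related by \eqref{dfn:S}, and $\Phi^{+}$ is an isomorphism, so $\nabla$ is defined via $\Phi^{+}(\nabla_{X}Y)=\nabla^{+}_{X}\Phi^{+}(Y)$. The plan mirrors Lemma~\ref{lem:1 to prove thm}. Because $\theta$ pairs $E^{+}$ and $E^{-}$ nondegenerately and $\Phi^{+}$ is an isomorphism, a vector field $W$ on $M$ vanishes if and only if $\theta(\Phi^{+}(W),\eta^{-})=0$ for all $\eta^{-}\in\Gamma(E^{-})$. So I will pair the difference $(\nabla_{X}S)(Y)-(\nabla_{Y}S)(X)$ with an arbitrary $\eta^{-}$. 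The goal is to show that
\[
2(n-1)\,\theta\bigl(\Phi^{+}((\nabla_{X}S)(Y)-(\nabla_{Y}S)(X)),\eta^{-}\bigr)
\]
equals the difference between the left- and right-hand sides of \eqref{lem2:condition1}. The equivalence is then immediate.

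The key step is to express $\theta(\Phi^{+}((\nabla_{X}S)(Y)),\eta^{-})$ in terms of $A$. Write $(\nabla_{X}S)(Y)=\nabla_{X}(S(Y))-S(\nabla_{X}Y)$ and use $\Phi^{+}(\nabla_{X}S(Y))=\nabla^{+}_{X}\Phi^{+}(S(Y))$. The mutual duality of $\nabla^{+}$ and $\nabla^{-}$ (Definition~\ref{dfn:dual connection}) gives
\[
\theta(\nabla^{+}_{X}\Phi^{+}(S(Y)),\eta^{-}) = X\theta(\Phi^{+}(S(Y)),\eta^{-}) - \theta(\Phi^{+}(S(Y)),\nabla^{-}_{X}\eta^{-}).
\]
By \eqref{dfn:S}, multiplying by $2(n-1)$ turns this into $XA(Y,\eta^{-})-A(Y,\nabla^{-}_{X}\eta^{-})$. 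Similarly, $2(n-1)\,\theta(\Phi^{+}(S(\nabla_{X}Y)),\eta^{-})=A(\nabla_{X}Y,\eta^{-})$.

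Next, antisymmetrize in $X$ and $Y$. The terms $-A(\nabla_{X}Y,\eta^{-})+A(\nabla_{Y}X,\eta^{-})$ combine to $-A([X,Y],\eta^{-})$, since $\nabla$ is torsion-free (equivalently, $\nabla^{+}$ is relatively torsion-free with respect to $\Phi^{+}$). The result is
\[
XA(Y,\eta^{-})-YA(X,\eta^{-})-A([X,Y],\eta^{-})-A(Y,\nabla^{-}_{X}\eta^{-})+A(X,\nabla^{-}_{Y}\eta^{-}),
\]
which is exactly the difference of the two sides of \eqref{lem2:condition1}.

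The only delicate point is bookkeeping rather than a real obstacle. One must check the direction of the duality identity and that $A(\cdot,\eta^{-})$ is tensorial in $\eta^{-}$, so that pairing against arbitrary local sections $\eta^{-}$ suffices. This second fact holds because \eqref{dfn:S} defines $A$ pointwise. Both directions of the equivalence follow from the single identity above.
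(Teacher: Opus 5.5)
Your proposal is correct and follows essentially the same route as the paper. You pair $\Phi^{+}\bigl((\nabla_{X}S)(Y)-(\nabla_{Y}S)(X)\bigr)$ against an arbitrary $\eta^{-}$, move $\nabla^{+}$ across by mutual duality, substitute \eqref{dfn:S}, and use torsion-freeness of $\nabla$ to get the $A([X,Y],\eta^{-})$ term; you differ only in scaling by $2(n-1)$ rather than $2$ and in stating explicitly the nondegeneracy of the $E^{+}\times E^{-}$ pairing that the paper leaves implicit.
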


\proof
It follows that
\begin{align*}
  2\theta(\Phi^{+}&((\nabla_{X}S)(Y) - (\nabla_{Y}S)(X)) , \eta^{-})\\
   &= 2\theta(\Phi^{+}(\nabla_{X}S(Y) - S(\nabla_{X}Y) - \nabla_{Y}S(X) + S(\nabla_{Y}X)), \eta^{-})\\
  &= 2\theta(\nabla^{+}_{X}\Phi^{+}(S(Y)), \eta^{-}) - 2\theta(\nabla^{+}_{Y}\Phi^{+}(S(X)), \eta^{-}) - 2\theta(\Phi^{+}(S(\nabla_{X}Y -\nabla_{Y}X)), \eta^{-})\\
  &= 2\{ X\theta(\Phi^{+}(S(Y)),\eta^{-}) - \theta(\Phi^{+}(S(Y)), \nabla^{-}_{X}\eta^{-}) \}\\
  &\qquad -2\{ Y\theta(\Phi^{+}(S(X)),\eta^{-}) - \theta(\Phi^{+}(S(X)), \nabla^{-}_{Y}\eta^{-}) \} - 2\theta(\Phi^{+}(S([X,Y])), \eta^{-})\\
  &=\frac{1}{n-1} \{ XA(Y,\eta^{-}) - A(Y,\nabla^{-}_{X}\eta^{-}) -YA(X,\eta^{-}) + A(X,\nabla^{-}_{Y}\eta^{-}) - A([X,Y],\eta^{-})\}.
\end{align*}
Therefore, the condition $\text{(i)}$ implies $\text{(ii)}$, and vice versa.
\qed

\begin{lem}
  \label{lem:3 to prove thm}
  Suppose that
  \[
  R^{\nabla^{-}}(X,Y)\eta^{-} = \frac{1}{n-1} \{ A(Y,\eta^{-})\Phi^{-}(X) - A(X,\eta^{-})\Phi^{-}(Y) \}
  \]
  for $X,Y\in\Gamma(TM)$ and $\eta^{-}\in\Gamma(E^{-})$.
  The following conditions are equivalent:
  \begin{itemize}
        \item[(i)]
            For $X,Y\in\Gamma(TM)$ and $\eta^{-}\in\Gamma(E^{-})$,
            \begin{align}
              \label{lem3:condition1}
              tr\{\eta^{-} \mapsto R^{\nabla^{-}}(X,Y)\eta^{-}\} = 0.
            \end{align}
        \item[(ii)]
            For $X,Y\in\Gamma(TM)$,
            \begin{align*}
              h(S(X),Y) = h(S(Y),X).
            \end{align*}
    \end{itemize}
\end{lem}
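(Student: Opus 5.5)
The plan is to compute the trace in (i) directly from the assumed curvature formula, as in the step of Proposition~\ref{thm:condition of projectively flat on coherent tangent bundle} that produced \eqref{eq:A-sym}, and then convert $A$ into $S$ via \eqref{dfn:S}. Taking the trace of $\eta^{-}\mapsto \frac{1}{n-1}\{A(Y,\eta^{-})\Phi^{-}(X)-A(X,\eta^{-})\Phi^{-}(Y)\}$ gives
\[
\tr\{\eta^{-} \mapsto R^{\nabla^{-}}(X,Y)\eta^{-}\} = \frac{1}{n-1}\{A(Y,\Phi^{-}(X)) - A(X,\Phi^{-}(Y))\}.
\]
This holds because the endomorphism $\eta^{-}\mapsto A(Y,\eta^{-})\Phi^{-}(X)$ has rank at most one, and its trace is $A(Y,\Phi^{-}(X))$.

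Next I substitute \eqref{dfn:S}, which gives $\frac{1}{n-1}A(Y,\Phi^{-}(X)) = 2\theta(\Phi^{+}(S(Y)),\Phi^{-}(X))$. By \eqref{def:degenerate metric} the right-hand side is $h(S(Y),X)$. Likewise $\frac{1}{n-1}A(X,\Phi^{-}(Y)) = h(S(X),Y)$. Hence
\[
\tr\{\eta^{-} \mapsto R^{\nabla^{-}}(X,Y)\eta^{-}\} = h(S(Y),X) - h(S(X),Y).
\]
The trace vanishes identically if and only if $h(S(X),Y)=h(S(Y),X)$ for all $X,Y$, which proves both directions at once.

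The only points needing care are bookkeeping. First, \eqref{def:degenerate metric} is applied with $\Phi^{+}$ in the first slot and $\Phi^{-}$ in the second, in the correct order, which is why the factor $2$ in \eqref{dfn:S} matches. Second, the $\frac{1}{n-1}$ normalisations cancel. I do not expect a genuine obstacle: the argument is pointwise linear algebra and uses neither nondegeneracy of $h$ nor any rank condition on $\Phi^{-}$.
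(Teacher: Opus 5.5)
Your proposal is correct and follows essentially the same argument as the paper, which expands $R^{\nabla^{-}}(X,Y)e^{-}_{i}$ in a frame $\{e^{-}_{1},\dots,e^{-}_{n}\}$ of $E^{-}$ and reads off $\tr\{\eta^{-}\mapsto R^{\nabla^{-}}(X,Y)\eta^{-}\} = h(S(Y),X) - h(S(X),Y)$ via \eqref{dfn:S} and \eqref{def:degenerate metric}. Your observation that the trace of the rank-one map $\eta^{-}\mapsto A(Y,\eta^{-})\Phi^{-}(X)$ is $A(Y,\Phi^{-}(X))$ is just the frame-free form of that same computation.
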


\proof
Let $\{e_{1}^{-}, \dots , e_{n}^{-}\}$ be a frame of $E^{-}$, and define $\Phi_{i}^{-}$ for $i\in\{1,\dots,n\}$ by $\Phi^{-}(X) = \Phi_{i}^{-}(X) e^{-}_{i}$.
Then it holds that
\begin{align*}
  R^{\nabla^{-}}(X,Y)e_{i}^{-} &= \frac{1}{n-1} \{ A(Y,e_{i}^{-})\Phi^{-}(X) - A(X,e_{i}^{-})\Phi^{-}(Y) \}\\
  &= 2 \theta(\Phi^{+}(S(Y)),e_{i}^{-})\Phi^{-}(X) - 2 \theta(\Phi^{+}(S(X)),e_{i}^{-})\Phi^{-}(Y)\\
  &= 2 \theta(\Phi^{+}(S(Y)),e_{i}^{-})\Phi_{j}^{-}(X) e_{j}^{-} - 2 \theta(\Phi^{+}(S(X)),e_{i}^{-})\Phi_{j}^{-}(Y) e_{j}^{-}.
\end{align*}
Hence, we have
\begin{align*}
  \tr \{ \eta^{-} \mapsto R^{\nabla^{-}}(X,Y)\eta^{-} \} &= 2 \theta(\Phi^{+}(S(Y)),e_{i}^{-})\Phi_{i}^{-}(X) - 2 \theta(\Phi^{+}(S(X)),e_{i}^{-})\Phi_{i}^{-}(Y)\\
  &= 2\theta(\Phi^{+}(S(Y)), \Phi^{-}(X)) - 2\theta(\Phi^{+}(S(X)), \Phi^{-}(Y))\\
  &= h(S(Y),X) - h(S(X),Y).
\end{align*}
Therefore, the condition $\text{(i)}$ implies $\text{(ii)}$, and vice versa.
\qed

\vskip\baselineskip
\noindent {\sl Proof of Theorem~\ref{main_thm_1}} :
Let $\{f,\xi\}$ be an equiaffine immersion and $(h, (E,\tau,I),\Phi,\nabla^{+},\nabla^{-})$ the induced quasi-Codazzi structure of $\{f,\xi\}$.
The fundamental equations \eqref{eq:the Gauss equation}, \eqref{eq:the Codazzi equation (I)}, \eqref{eq:the Codazzi equation (II)}, and \eqref{eq:the Ricci equation} hold.
Then, $\nabla^{-}$ is projectively flat by Proposition~\ref{thm:condition of projectively flat on coherent tangent bundle} and Lemmas~\ref{lem:1 to prove thm}, \ref{lem:2 to prove thm}, and \ref{lem:3 to prove thm}.
\qed

\vskip\baselineskip
\noindent {\sl Proof of Theorem~\ref{main_thm_2}} :
Let $(h, (E,\tau,I),\Phi,\nabla^{+},\nabla^{-})$ be a quasi-Codazzi structure on a simply connected manifold.

By the assumption, there exists a tensor field $A\in\Gamma(T^{*}M\otimes (E^{-})^{*})$ satisfying \eqref{lem1:condition1}, \eqref{lem2:condition1} and \eqref{lem3:condition1} by Theorem~\ref{thm:condition of projectively flat on coherent tangent bundle ver2}.
We set a $(1,1)$-tensor field $S\in\Gamma(TM\otimes T^{*}M)$ by \eqref{dfn:S}, a connection $\nabla$ on $M$ by $\Phi^{+}(\nabla_{X}Y) = \nabla^{+}_{X}\Phi^{+}(Y)$, and a $(0,1)$-tensor field $\tau\in\Gamma(T^{*}M)$ by $\tau \equiv 0$.

Let us check that $h$, $\nabla$, $S$ and $\tau$ satisfy the fundamental equations.
By Lemmas~\ref{lem:1 to prove thm}, \ref{lem:2 to prove thm} and \ref{lem:3 to prove thm}, the Gauss equation \eqref{eq:the Gauss equation}, the Codazzi equation \text{(I)} \eqref{eq:the Codazzi equation (I)}, the Codazzi equation \text{(II)} \eqref{eq:the Codazzi equation (II)} and the Ricci equation \eqref{eq:the Ricci equation} are satisfied.
Moreover, we have the Codazzi equation \text{(I)} \eqref{eq:the Codazzi equation (I)}.
Indeed, $\nabla h$ is totally symmetric because the following identity holds for the totally symmetric $(0,3)$-tensor $C$ defined by \eqref{dfn:cubic tensor of quasi-Codazzi}:
\begin{align*}
  C(X,Y,Z) &= -2\{ \theta(\nabla^{+}_{X}\Phi^{+}(Y),\Phi^{-}(Z)) - \theta(\Phi^{+}(Z), \nabla^{-}_{X}\Phi^{-}(Y)) \}\\
  &= -2\{ \theta(\nabla^{+}_{X}\Phi^{+}(Y),\Phi^{-}(Z))\\
  &\quad\quad\quad\quad\quad- X\theta(\Phi^{+}(Z),\Phi^{-}(Y)) + \theta(\nabla^{+}_{X}\Phi^{+}(Z),\Phi^{-}(Y)) \}\\
  &= -2\{ \theta(\Phi^{+}(\nabla_{X}Y),\Phi^{-}(Z))\\
  &\quad\quad\quad\quad\quad-X\theta(\Phi^{+}(Z),\Phi^{-}(Y)) + \theta(\Phi^{+}(\nabla_{X}Z),\Phi^{-}(Y)) \}\\
  &= -h(\nabla_{X}Y,Z) + Xh(Z,Y) - h(\nabla_{X}Z,Y)\\
  &= (\nabla_{X} h)(Y,Z)
\end{align*}
for $X,Y,Z\in\Gamma(TM)$.
Therefore, the fundamental equations are satisfied.
By Proposition~\ref{prop:exist the affine immersion}, there exists an affine immersion $\{f,\xi\}$ inducing $h$, $\nabla$, $S$ and $\tau$.

Let $(h,(\til{E}= \til{E}^{+}\oplus \til{E}^{-} = f_{*}(TM) \oplus (f_{*}(TM))^{*} ,\til{\theta},\til{I}), \til{\Phi}=f_{*}\oplus -\nu_{*}, \til{\nabla}^{+}, \til{\nabla}^{-})$ be the induced quasi-Codazzi structure of $\{f,\xi \}$, where $\nu$ is the conormal map of $\{f,\xi\}$.
We now verify that this structure and $(h,(E,\theta,I),\Phi,\nabla^{+},\nabla^{-})$ are isomorphic.
We define a bundle map $\F^{+}:E^{+}\rightarrow \til{E}^{+}$ by
\[
\F^{+}(\eta^{+}) = f_{*} \circ (\Phi^{+})^{-1}(\eta^{+}),
\]
and a bundle map $\F^{-}:E^{-}\rightarrow \til{E}^{-}$ by the condition
\[
\til{\theta}(\F^{+}(\eta^{+}), \F^{-}(\eta^{-})) = \theta(\eta^{+},\eta^{-})
\]
for $\eta^{+}\in\Gamma(E^{+})$ and $\eta^{-}\in\Gamma(E^{-})$.
Then the bundle map $\F = \F^{+}\oplus \F^{-} : E\rightarrow \til{E}$ is a bundle isomorphism.
A direct computation shows that $\F\circ \Phi = \til{\Phi}$, $\F\circ I = \til{I}$ and $\F(\nabla^{+}_{X}\eta^{+}) = \til{\nabla}^{+}_{X} \F(\eta^{+})$ are satisfied.
In addition, we have
\begin{align*}
  \til{\theta}(\F^{+}(\eta^{+}), \F^{-}(\nabla^{-}_{X}\eta^{-}) ) &= \theta(\eta^{+},\nabla^{-}_{X}\eta^{-})\\
  &= X\theta(\eta^{+},\eta^{-}) - \theta(\nabla^{+}_{X}\eta^{+},\eta^{-})\\
  &= X\til{\theta} (\F^{+}(\eta^{+}), \F^{-}(\eta^{-})) - \til{\theta} (\F^{+}(\nabla^{+}_{X}\eta^{+}), \F^{-}(\eta^{+}))\\
  &= X\til{\theta}(\F^{+}(\eta^{+}),\F^{-}(\eta^{-})) - \til{\theta} (\til{\nabla}^{+}_{X}\F^{+}(\eta^{+}), \F(\eta^{-}))\\
  &= \til{\theta} (\F^{+}(\eta^{+}), \til{\nabla}^{-}_{X}\F(\eta^{-})).
\end{align*}
Therefore, $\F(\nabla^{-}_{X}\eta^{-}) = \til{\nabla}^{-}_{X} \F(\eta^{-})$ holds.
Hence, the two quasi-Codazzi structures are isomorphic by $\F$.
\qed

\begin{exam}
  Let $\{f,\xi\}$ be a graph immersion with respect to $\phi$.
  Since the second fundamental form $h$ coincides with the Hessian of $\phi$, $h$ is degenerate if $\phi$ is not a strictly convex function.
  Furthermore, the induced connection of $\{f,\xi\}$ is flat.
  Then a graph immersion induces a quasi-Hessian structure as described above.

  Let $n=2$ and define $\phi(x_{1},x_{2}) = \frac{1}{6}(x_{1}^{3} + x_{2}^{3})$.
  In this case, the second fundamental form $h$ is given by
  \[
  h\left( \frac{\rd}{\rd x_{i}}, \frac{\rd}{\rd x_{j}} \right) = \frac{\rd^{2}\phi}{\rd x_{i} \rd x_{j}} = \delta_{ij} x_{i}
  \]
  for $i,j=1,2$, where $\delta_{ij}$ is the Kronecker delta.
  Hence, $h$ is degenerate where $x_{1}=0$ or $x_{2}=0$.
  Therefore, the graph immersion $\{f,\xi\}$ with respect to $\phi$ induces a quasi-Hessian structure, but not a Hessian structure.
  We set $\widehat{\alpha}_{i} := \alpha_{i}|_{f_{*}T\R^{2}} \in \Gamma((f_{*}T\R^{2})^{*})$ for $i=1,2$, where $\alpha_{i} \in \Gamma(T^{*}\R^{3})$ satisfies $\alpha_{i}(\frac{\rd}{\rd x_{j}}) = \delta_{ij}$ for $i,j=1,2,3$.
  The conormal map of $\{f,\xi\}$ is obtained by $\nu:\R^{2}\ni (x_{1},x_{2})\mapsto (-\frac{x_{1}^{2}}{2},-\frac{x_{2}^{2}}{2},1)\in(\R^{3})^{*}$.
  The induced quasi-Hessian structure $(h,(E,\theta,I),\Phi,\nabla^{+},\nabla^{-})$ is given by
  \begin{gather*}
    E = f_{*}T\R^{2} \oplus (f_{*}T\R^{2})^{*},\\
    \theta \left( f_{*}\frac{\rd}{\rd x_{i}} \oplus \widehat{\alpha}_{j}, f_{*}\frac{\rd}{\rd x_{k}} \oplus \widehat{\alpha}_{l} \right) = \widehat{\alpha}_{l}\left( f_{*}\frac{\rd}{\rd x_{i}} \right) + \widehat{\alpha}_{j}\left( f_{*}\frac{\rd}{\rd x_{k}} \right) ,\\
    I\left( f_{*}\frac{\rd}{\rd x_{i}} \oplus \widehat{\alpha}_{j} \right) = f_{*}\frac{\rd}{\rd x_{i}} \oplus -\widehat{\alpha}_{j},\\
    \Phi \left( \frac{\rd}{\rd x_{i}} \right) = f_{*} \frac{\rd}{\rd x_{i}} \oplus -\nu_{*} \frac{\rd}{\rd x_{i}} = \left( \frac{\rd}{\rd x_{i}} + \frac{x_{i}^{2}}{2} \frac{\rd}{\rd x_{3}} \right) \oplus x_{i} \widehat{\alpha}_{i}, \\
    \nabla^{+}_{\tfrac{\rd}{\rd x_{i}}}f_{*}  \frac{\rd}{\rd x_{j}} = 0, \qquad \nabla^{-}_{\tfrac{\rd}{\rd x_{i}}}\widehat{\alpha}_{j} = 0
  \end{gather*}
  for $i,j,k,l=1,2$.
\end{exam}

\begin{exam}
  Let $\{f,\xi\}$ be a centroaffine immersion.
  The second fundamental form $h$ coincides with the Ricci tensor field $Ric^{\nabla}$, where $\nabla$ is the induced connection.
  Hence, $Ric^{\nabla}$ is degenerate if and only if $h$ is degenerate.
  Then a centroaffine immersion induces a quasi-Codazzi structure.

  Let $\epsilon$ be a positive number such that $\gamma(x_{1},x_{2}) :=2(x_{1}^{3}+x_{2}^{3})-1<0$ holds for any $(x_{1},x_{2}) \in D_{\epsilon}:=\{ (x_{1},x_{2})\in\R^{2} \;|\; x_{1}^{2}+x_{2}^{2} < \epsilon \}$.
  Set $f:D_{\epsilon}\ni (x_{1},x_{2}) \mapsto (x_{1},x_{2},x_{1}^{3}+x_{2}^{3}+1)\in\R^{3}$.
  Then the pair $\{f,-f\}$ is a centroaffine immersion and
  \begin{align*}
    h\left( \frac{\rd}{\rd x_{i}}, \frac{\rd}{\rd x_{j}} \right) = \delta_{ij} \frac{6x_{i}}{\gamma(x_{1},x_{2})}, \quad \nabla_{\tfrac{\rd}{\rd x_{i}}}\frac{\rd}{\rd x_{j}} = \delta_{ij} \frac{6x_{i}}{\gamma(x_{1},x_{2})} \left(x_{1} \frac{\rd}{\rd x_{1}} + x_{2} \frac{\rd}{\rd x_{2}}\right)
  \end{align*}
  for $i,j = 1,2$.
  Thus, $h$ is degenerate where $x_{1}=0$ or $x_{2}=0$.
  Therefore, the centroaffine immersion induces a quasi-Codazzi structure, not a Codazzi structure.
  We set
  \begin{align*}
    \widehat{\alpha}_{1} = \left. \left( \frac{x_{1}}{\gamma(x_{1},x_{2})^{2}}(-3x_{1} \alpha_{1} -3x_{2}^{2} \alpha_{2} + \alpha_{3}) + \frac{1}{\gamma(x_{1},x_{2})}\alpha_{1} \right) \right|_{f_{*}T\R^{2}}
  \end{align*}
  and
  \begin{align*}
    \widehat{\alpha}_{2} = \left. \left( \frac{x_{2}}{\gamma(x_{1},x_{2})^{2}}(-3x_{1}^{2} \alpha_{1} -3x_{2} \alpha_{2} + \alpha_{3}) + \frac{1}{\gamma(x_{1},x_{2})}\alpha_{2} \right) \right|_{f_{*}T\R^{2}},
  \end{align*}
  where $\alpha_{i} \in \Gamma(T^{*}\R^{3})$ satisfies $\alpha_{i}(\frac{\rd}{\rd x_{j}}) = \delta_{ij}$ for $i,j=1,2,3$.
  The conormal map of $\{f,-f\}$ is obtained by $\nu:\R^{2}\ni (x_{1},x_{2})\mapsto \frac{1}{\gamma(x_{1},x_{2})}(3x_{1}^{2},3x_{2}^{2},-1) \in(\R^{3})^{*}$.
  The induced quasi-Codazzi structure $(h,(E,\theta, I),\Phi,\nabla^{+},\nabla^{-})$ is given by
    
  \begin{gather*}
    E = f_{*}T\R^{2} \oplus (f_{*}T\R^{2})^{*},\\
    \theta \left( f_{*}\frac{\rd}{\rd x_{i}} \oplus \widehat{\alpha}_{j}, f_{*}\frac{\rd}{\rd x_{k}} \oplus \widehat{\alpha}_{l} \right) = \widehat{\alpha}_{l}\left( f_{*}\frac{\rd}{\rd x_{i}} \right) + \widehat{\alpha}_{j}\left( f_{*}\frac{\rd}{\rd x_{k}} \right) ,\\
    I\left( f_{*}\frac{\rd}{\rd x_{i}} \oplus \widehat{\alpha}_{j} \right) = f_{*}\frac{\rd}{\rd x_{i}} \oplus -\widehat{\alpha}_{j},\\
    \Phi \left( \frac{\rd}{\rd x_{i}} \right) = f_{*} \frac{\rd}{\rd x_{i}} \oplus -\nu_{*} \frac{\rd}{\rd x_{i}} =  \frac{\rd}{\rd x_{i}} + 3x_{i}^{2} \frac{\rd}{\rd x_{3}} \oplus 6x_{i}\widehat{\alpha}_{i}\\
    \nabla^{+}_{\tfrac{\rd}{\rd x_{i}}} f_{*}\frac{\rd}{\rd x_{j}} = \delta_{ij} \frac{6x_{i}}{\gamma(x_{1},x_{2})} \left(x_{1} f_{*}\frac{\rd}{\rd x_{1}} + x_{2} f_{*}\frac{\rd}{\rd x_{2}}\right),\\
    \nabla^{-}_{\tfrac{\rd}{\rd x_{i}}} \ha{\alpha}_{j} = \left\{ -\frac{6x_{i}^{2}}{\gamma} \delta_{ij} - \frac{6x_{1}x_{2}}{\gamma} (1-\delta_{ij}) \right\}\ha{\alpha}_{i} - \frac{\rd_{i}\gamma}{\gamma}\ha{\alpha}_{j}
  \end{gather*}
  for $i,j,k,l=1,2$.
\end{exam}

\begin{exam}
  We consider a hypersurface $f:\R^{2}\ni (t,x)\mapsto \gamma(t) + (0,x,x)\in \R^{3}$ and a transversal vector field $\xi := \gamma^{\prime\prime}$, where $\gamma$ is a curve defined by $\gamma(t)=(\cos t, \sin t, t)$.
  The pair $\{f,\xi\}$ defines an affine immersion.
  In particular, the affine immersion is called an {\it affine cylinder} \cite{Nomizu-Sasaki}.
  By straightforward calculations, $\{ f,\xi \}$ is equiaffine and the second fundamental form $h$ is given by
  \begin{gather*}
    h\left( \frac{\rd}{\rd t}, \frac{\rd}{\rd t} \right) =1, \quad h\left( \frac{\rd}{\rd t}, \frac{\rd}{\rd x} \right) = h\left( \frac{\rd}{\rd x}, \frac{\rd}{\rd t} \right) =h\left( \frac{\rd}{\rd x}, \frac{\rd}{\rd x} \right) =0.
  \end{gather*}
  Therefore, $h$ is degenerate for every point.
  We set $\widehat{\alpha}_{1} = (-\sin t\;\alpha_{1} +\cos t\;\alpha_{2})|_{f_{*}T\R^{2}}$ and $\widehat{\alpha}_{2} = (\sin t\;\alpha_{1} -\cos t\;\alpha_{2} + \alpha_{3})|_{f_{*}T\R^{2}}$, where $\alpha_{i} \in \Gamma(T^{*}\R^{3})$ satisfies $\alpha_{i}(\frac{\rd}{\rd x_{j}}) = \delta_{ij}$ for $i,j=1,2,3$.
  The conormal map of $\{f,\xi\}$ is obtained by $\nu:\R^{2}\ni(t,x)\mapsto (-\cos t, -\sin t,0)\in(\R^{3})^{*}$.
  The induced quasi-Hessian structure $(h,(E,\theta,I),\Phi,\nabla^{+},\nabla^{-})$ is given by
  \begin{gather*}
    E = f_{*}T\R^{2} \oplus (f_{*}T\R^{2})^{*},\\
    \theta \left( f_{*}\frac{\rd}{\rd t} \oplus \widehat{\alpha}_{i},\; f_{*}\frac{\rd}{\rd x} \oplus \widehat{\alpha}_{j} \right) = \widehat{\alpha}_{j}\left( f_{*}\frac{\rd}{\rd t} \right) + \widehat{\alpha}_{i}\left( f_{*}\frac{\rd}{\rd x} \right) ,\\
    I\left( f_{*}\frac{\rd}{\rd t}\right) = f_{*}\frac{\rd}{\rd t},\quad I\left( f_{*}\frac{\rd}{\rd x} \right) = f_{*}\frac{\rd}{\rd x},\quad I(\widehat{\alpha}_{i}) = -\widehat{\alpha}_{i}\\
    \Phi \left( \frac{\rd}{\rd t} \right) = \gamma^{\prime} \oplus \alpha_{1}, \quad \Phi \left( \frac{\rd}{\rd x} \right) = (0,0,1) \oplus \mathbf{0},\\
    \nabla^{+}_{\tfrac{\rd}{\rd t}}f_{*} \frac{\rd}{\rd t} = \nabla^{+}_{\tfrac{\rd}{\rd t}} f_{*} \frac{\rd}{\rd x} = \nabla^{+}_{\tfrac{\rd}{\rd x}} f_{*} \frac{\rd}{\rd t} = \nabla^{+}_{\tfrac{\rd}{\rd x}} f_{*} \frac{\rd}{\rd x} = 0,\\
    \nabla^{-}_{\tfrac{\rd}{\rd t}} \widehat{\alpha}_{i} = \nabla^{-}_{\tfrac{\rd}{\rd x}} \widehat{\alpha}_{i} = 0
  \end{gather*}
  for $i,j=1,2$.
\end{exam}

Finally, we describe the relation between a geometric divergence and a quasi-Codazzi structure induced by an equiaffine immersion.

\begin{prop}\upshape
  Let $\{ f,\xi \}$ be an equiaffine immersion and $\nu$ a conormal map of $\{f,\xi\}$.
  A geometric divergence
  \[
  \rho^{G}:M\times M \ni (p,q) \mapsto \langle f(p)-f(q),\nu(q) \rangle \in \R
  \]
  is a weak contrast function of the induced quasi-Codazzi structure of $\{ f,\xi \}$, i.e., $\rho$ satisfies, for $X,Y,Z\in\Gamma(TM)$,
  \begin{align*}
    h(X,Y) &=-\rho^{G}[X|Y], \\
    C(X,Y,Z) &=-\rho^{G}[Z|XY] + \rho[XY|Z],
  \end{align*}
  where $h$ is the second fundamental form and $C$ is defined by \eqref{dfn:cubic tensor of quasi-Codazzi}.
\end{prop}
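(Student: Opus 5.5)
The plan is a direct computation of the derivatives of $\rho^{G}(p,q)=\langle \nu(q), f(p)-f(q)\rangle$ along the diagonal, using only the defining relations of the conormal map and \eqref{eq:property of the conormal map} with $\tau\equiv 0$. No integrability or curvature input is needed.

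\emph{Step 1: weak contrast conditions.} Condition (i) is immediate, since $\rho^{G}(p,p)=0$. For condition (ii), differentiating in $p$ gives $\rho^{G}[X|-]=\langle \nu, f_{*}X\rangle=0$. Differentiating in $q$ gives $\langle \nu_{*}X, f(p)-f(q)\rangle - \langle \nu(q), f_{*}X\rangle$. The first term vanishes at $p=q$ and the second vanishes identically, so $\rho^{G}[-|X]=0$.

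\emph{Step 2: the metric $h$.} I compute $\rho^{G}[X|Y]$ by differentiating $\langle \nu(q), f_{*}X_{p}\rangle$ in $q$ and setting $p=q$. This gives $\langle \nu_{*}Y, f_{*}X\rangle=-h(Y,X)=-h(X,Y)$ by \eqref{eq:property of the conormal map}, which is the required identity $h(X,Y)=-\rho^{G}[X|Y]$.

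\emph{Step 3: the cubic form.} I need the two third-order terms. Since $\partial_{p}\rho^{G}=\langle \nu(q), f_{*}X_{p}\rangle$, differentiating in $p$ again gives $\langle \nu(q), D_{X}f_{*}Y\rangle$ at the point $p$. By the Gauss formula this equals $\langle \nu(q), f_{*}(\nabla_{X}Y)_{p}\rangle + h(X,Y)(p)\langle \nu(q), \xi_{p}\rangle$. Then $\rho^{G}[XY|Z]$ is obtained by applying $Z_{q}$ and setting $p=q$. The first term yields $\langle \nu_{*}Z, f_{*}\nabla_{X}Y\rangle=-h(Z,\nabla_{X}Y)$. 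The second yields $h(X,Y)\langle \nu_{*}Z,\xi\rangle=-h(X,Y)\tau(Z)=0$ because $\tau\equiv 0$. Hence
\[
\rho^{G}[XY|Z]=-h(\nabla_{X}Y,Z).
\]
For the other term, $\rho^{G}[Z|XY]$ means $Z_{p}X_{q}Y_{q}$. First $Z_{p}$ gives $\langle \nu(q), f_{*}Z_{p}\rangle$, a function of $q$ with $p$ frozen. Applying $X_{q}$ and then $Y_{q}$ gives $\langle X(\nu_{*}Y)(q), f_{*}Z_{p}\rangle$; the order of $X,Y$ is immaterial up to the symmetrization implicit in the convention. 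At $p=q$ I rewrite this as $X\langle \nu_{*}Y, f_{*}Z\rangle - \langle \nu_{*}Y, D_{X}f_{*}Z\rangle$. Using the Gauss formula once more, together with $\langle\nu_{*}Y,\xi\rangle=-\tau(Y)=0$, this becomes $-Xh(Y,Z)+h(Y,\nabla_{X}Z)$. Therefore
\[
-\rho^{G}[Z|XY]+\rho^{G}[XY|Z]=Xh(Y,Z)-h(Y,\nabla_{X}Z)-h(\nabla_{X}Y,Z)=(\nabla_{X}h)(Y,Z).
\]

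\emph{Step 4: conclusion.} The computation in Section~\ref{sec:4-2} showed that the $C$ defined by \eqref{dfn:cubic tensor of quasi-Codazzi} for the induced structure coincides with $\nabla h$. This finishes the proof.

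The main obstacle is bookkeeping in Step 3: keeping track of which slot each derivative acts on, making sure the second derivative of $\nu$ is taken only in $q$ while $f_{*}Z$ stays at $p$, and verifying that the $\tau$-terms are exactly the ones that vanish by equiaffineness. Without $\tau\equiv 0$, extra terms $h(X,Y)\tau(Z)$ would appear. For the evaluations, I will work on a neighborhood of the diagonal with vector fields extended constantly in the other factor.
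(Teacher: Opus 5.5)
Your proposal is correct and is essentially the paper's proof: differentiate $\rho^{G}$ along the diagonal, apply the Gauss formula, kill the $\xi$-terms via $\langle \nu_{*}\,\cdot\,,\xi\rangle=-\tau=0$, and identify the resulting $(\nabla_{X}h)(Y,Z)$ with $C$ using the computation in Section~\ref{sec:4-2}. One small correction: the order of $X_{q},Y_{q}$ is not immaterial for the individual term, since $\rho^{G}[Z|YX]-\rho^{G}[Z|XY]=h([X,Y],Z)$; however, the expression $\langle D_{X}(\nu_{*}Y),f_{*}Z\rangle$ you actually use is what the convention $Z_{p}X_{q}Y_{q}$ prescribes ($Y_{q}$ applied first), and this matches the ordering in your $\rho^{G}[XY|Z]$, so your computation stands.
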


\proof
By straightforward calculations, we have the following identities:
\begin{align*}
  \rho^{G}[-|-](r) &= \rho^{G}(r,r) = 0;\\
  \rho^{G}[X|-](r) &= X_{p} \langle f(p)-f(q),\nu(q) \rangle \big|_{p=q=r}\\
  &= \langle f_{*}X_{p},\nu(q) \rangle \big|_{p=q=r}\\
  &= 0;\\
  \rho^{G}[-|X](r) &= X_{q}\langle f(p)-f(q),\nu(q) \rangle \big|_{p=q=r}\\
  &= \langle -f_{*}X_{q},\nu(q) \rangle \big|_{p=q=r} + \langle f(p)-f(q),\nu_{*}Y_{q} \rangle \big|_{p=q=r}\\
  &= 0;\\
  \rho^{G}[X|Y](r) &= X_{p}Y_{q} \langle f(p)-f(q),\nu(q) \rangle \big|_{p=q=r}\\
  &= X_{p} \langle -f_{*}Y_{q},\nu(q) \rangle \big|_{p=q=r} + X_{p} \langle f(p)-f(q), \nu_{*}Y_{q} \rangle \big|_{p=q=r}\\
  &= \langle f_{*}X_{p},\nu_{*}Y_{q} \rangle \big|_{p=q=r}\\
  &= -h(X_{r},Y_{r}).
\end{align*}
Moreover, it holds that
\begin{align*}
  \rho^{G}[XY|Z](r)-&\rho^{G}[Z|XY](r)\\
  &= X_{p}\langle f_{*}Y_{p},\nu_{*}Z_{q} \rangle \big|_{p=q=r} - X_{q}\langle f_{*}Z_{p},\nu_{*}Y_{q} \rangle \big|_{p=q=r}\\
  &= \langle \til{\nabla}_{X}f_{*}Y_{p},\nu_{*}Z_{q} \rangle \big|_{p=q=r} - \langle f_{*}Z_{p},\til{\nabla}_{X}\nu_{*}Y_{q} \rangle \big|_{p=q=r}\\
  &= \langle \til{\nabla}_{X}f_{*}Y_{r},\nu_{*}Z_{r} \rangle - \langle f_{*}Z_{r},\til{\nabla}_{X}\nu_{*}Y_{r} \rangle\\
  &=\langle \til{\nabla}_{X}f_{*}Y_{r},\nu_{*}Z_{r} \rangle - X\langle f_{*}Z_{r},\nu_{*}Y_{r} \rangle + \langle \til{\nabla}_{X}f_{*}Z_{r},\nu_{*}Y_{r} \rangle \\
  &= \langle f_{*}(\nabla_{X}Y_{r}) + h(X_{r},Y_{r})\xi_{r}, \nu_{*}Z_{r} \rangle - X\langle f_{*}Z_{r},\nu_{*}Y_{r} \rangle \\ & \qquad \qquad \qquad +\langle f_{*}(\nabla_{X}Z_{r})+ h(X_{r},Z_{r})\xi_{r},\nu_{*}Y_{r}) \rangle \\
  &= \langle f_{*}(\nabla_{X}Y_{r}),\nu_{*}Z_{r} \rangle - X\langle f_{*}Z_{r},\nu_{*}Y_{r} \rangle + \langle f_{*}(\nabla_{X}Z_{r}),\nu_{*}Y_{r} \rangle\\
  &= -h(\nabla_{X}Y_{r},Z_{r}) + Xh(Z_{r},Y_{r}) - h(\nabla_{X}Z_{r},Y_{r})\\
  &= \nabla_{X}h(Y_{r},Z_{r})\\
  &= C(X_{r},Y_{r},Z_{r}),
\end{align*}
where $\til{\nabla}$ is the standard connection on $\R^{n+1}$.
Hence, a geometric divergence is a weak contrast function of the quasi-Codazzi structure.
\qed

\vskip\baselineskip
\vskip\baselineskip

\subsection*{Acknowledgments}
The author would like to thank Profs.~H.~Furuhata and T.~Ohmoto for guiding him to this subject and valuable instructions.
He is also grateful to Prof.~Y.~Numata and Dr.~N.~Nakajima and Dr.~R.~Ueno for their helpful comments during his seminar talks.
This work was supported by JST SPRING, Grant Number JPMJSP2119.

\vskip\baselineskip
\vskip\baselineskip

\section*{Appendix}

\noindent {\sl Proof of Lemma~\ref{lem:Bianchi identity of coherent tangent bundle}} :
Let $\{ \eta_{1},\dots,\eta_{n} \}$ be a frame of $\E$.
For all $i,j\in \{1,\dots,n\}$ and $X,Y\in\Gamma(TM)$, the structure equation $\Omega_{i}^{j}$ of $\nabla^{\E}$ is defined by
\[
R^{\nabla^{\E}}(X,Y)\eta_{i} = \Omega_{i}^{j}(X,Y)\eta_{j}
\]
and satisfies $d\Omega_{i}^{j}= \Omega_{k}^{j} \wedge \omega_{i}^{k} - \omega_{k}^{j} \wedge \Omega_{i}^{k}$ by the second Bianchi identity \cite{Kobayashi-Nomizu}, where $\nabla^{\E}_{X}\eta_{i}=\omega_{i}^{j}(X)\eta_{j}$.
By the assumption, there exists $A\in \Gamma(T^{*}M \otimes \E^{*})$ satisfying
\begin{align*}
  R^{\nabla^{\E}}(X,Y)\eta_{i} &= \frac{1}{n-1} \{ A(Y,\eta_{i})\phi(X) - A(X,\eta_{i})\phi(Y) \}\\
  &=\frac{1}{n-1} \{ A(Y,\eta_{i})\phi_{j}(X) - A(X,\eta_{i})\phi_{j}(Y) \}\eta_{j},
\end{align*}
where $\phi(X)=\phi_{j}(X)\eta_{j}$.
Hence, we have
\[
\Omega_{i}^{j}(X,Y) = \frac{1}{n-1} \{ A(Y,\eta_{i})\phi_{j}(X) - A(X,\eta_{i})\phi_{j}(Y) \}.
\]
For $X,Y,Z\in\Gamma(TM)$, the following equations hold:
\begin{align*}
  d\Omega_{i}^{j} (X,Y,Z) &= \{ X\Omega_{i}^{j}(Y,Z) + Y\Omega_{i}^{j}(Z,X) + Z\Omega_{i}^{j}(X,Y) \\
  &\quad\quad -\Omega_{i}^{j}([X,Y],Z) - \Omega_{i}^{j}([Y,Z],X) - \Omega_{i}^{j}([Z,X],Y) \};
\end{align*}
\begin{align*}
  \Omega_{k}^{j} \wedge \omega_{i}^{k} (X,Y,Z) = \Omega_{k}^{j}(X,Y) \omega_{i}^{k}(Z) + \Omega_{k}^{j}(Y,Z) \omega_{i}^{k}(X) + \Omega_{k}^{j}(Z,X) \omega_{i}^{k}(Y);
\end{align*}
\begin{align*}
  \omega_{k}^{j} \wedge \Omega_{i}^{k} (X,Y,Z) = \omega_{k}^{j}(X)\Omega_{i}^{k}(Y,Z) + \omega_{k}^{j}(Y)\Omega_{i}^{k}(Z,X) + \omega_{k}^{j}(Z)\Omega_{i}^{k}(X,Y);
\end{align*}
\begin{align*}
  (n-1) &\sum_{j=1}^{n} \{ X\Omega_{i}^{j}(Y,Z)\eta_{j} + \Omega_{i}^{j}(Y,Z) \nabla^{\E}_{X}\eta_{j} \}\\
  &= \sum_{j=1}^{n} [ \{ (XA(Z,\eta_{i}))\phi_{j}(Y) + A(Z,\eta_{i})(X\phi_{j}(Y)) - (XA(Y,\eta_{i}))\phi_{j}(Z)\\
  &\qquad\qquad - A(Y,\eta_{i})(X\phi_{j}(Z))\}\eta_{j} + \{ A(Z,\eta_{i})\phi_{j}(Y) - A(Y,\eta_{i})\phi_{j}(Z) \} \nabla^{\E}_{X}\eta_{j} ]\\
  &= XA(Z,\eta_{i})\phi(Y) - XA(Y,\eta_{i})\phi(Z) + A(Z,\eta_{i})\nabla^{\E}_{X}\phi(Y) - A(Y,\eta_{i})\nabla^{\E}_{X}\phi(Z).
\end{align*}
Therefore, it holds that
\begin{align*}
  0&= (n-1) \left[ \sum_{j=1}^{n} \left\{ d\Omega_{i}^{j} - \sum_{k=1}^{n}(\Omega_{k}^{j} \wedge \omega_{i}^{k} + \omega_{k}^{j} \wedge \Omega_{i}^{k}) \right\}(X,Y,Z)\eta_{j} \right] \\
  &= (n-1) \left[ \sum_{j=1}^{n}  \{ X\Omega_{i}^{j}(Y,Z) + Y\Omega_{i}^{j}(Z,X) + Z\Omega_{i}^{j}(X,Y) \right.\\
  &\quad\quad -\Omega_{i}^{j}([X,Y],Z) - \Omega_{i}^{j}([Y,Z],X) - \Omega_{i}^{j}([Z,X],Y) \}\eta_{j}\\
  &\quad\quad\quad\quad - \sum_{j,k=1}^{n} \{ \Omega_{k}^{j}(X,Y) \omega_{i}^{k}(Z) + \Omega_{k}^{j}(Y,Z) \omega_{i}^{k}(X) + \Omega_{k}^{j}(Z,X) \omega_{i}^{k}(Y) \} \eta_{j} \\
  &\qquad\qquad\quad\quad \left. + \sum_{j,k=1}^{n} \{ \omega_{k}^{j}(X)\Omega_{i}^{k}(Y,Z) + \omega_{k}^{j}(Y)\Omega_{i}^{k}(Z,X) + \omega_{k}^{j}(Z)\Omega_{i}^{k}(X,Y) \} \eta_{j} \right] \\
  &= (n-1) \left[ \sum_{j=1}^{n}  \{ X\Omega_{i}^{j}(Y,Z) + Y\Omega_{i}^{j}(Z,X) + Z\Omega_{i}^{j}(X,Y) \} \eta_{j} \right. \\
  &\quad\quad - R^{\nabla^{\E}}([X,Y],Z)\eta_{i} - R^{\nabla^{\E}}([Y,Z],X)\eta_{i} - R^{\nabla^{\E}}([Z,X],Y)\eta_{i}\\
  &\quad\quad\quad\quad - R^{\nabla^{\E}}(X,Y)\nabla^{\E}_{Z}\eta_{i} - R^{\nabla^{\E}}(Y,Z)\nabla^{\E}_{X}\eta_{i} - R^{\nabla^{\E}}(Z,X)\nabla^{\E}_{Y}\eta_{i}\\
  &\qquad\qquad\quad\quad \left. + \sum_{k=1}^{n} \{ \Omega^{k}_{i}(Y,Z) \nabla^{\E}_{X}\eta_{k} + \Omega^{k}_{i}(Z,X) \nabla^{\E}_{Y}\eta_{k} + \Omega^{k}_{i}(X,Y) \nabla^{\E}_{Z}\eta_{k} \} \right]\\
  &= \{ XA(Z,\eta_{i}) \phi(Y) - XA(Y,\eta_{i})\phi(Z) + A(Z,\eta_{i}) \nabla^{\E}_{X}\phi(Y) - A(Y,\eta_{i}) \nabla^{\E}_{X}\phi(Z) \}\\
  &\qquad + \{ YA(X,\eta_{i}) \phi(Z) - YA(Z,\eta_{i})\phi(X) + A(X,\eta_{i}) \nabla^{\E}_{Y}\phi(Z) - A(Z,\eta_{i}) \nabla^{\E}_{Y}\phi(X) \}\\
  &\qquad\qquad + \{ ZA(Y,\eta_{i}) \phi(X) - ZA(X,\eta_{i})\phi(Y) + A(Y,\eta_{i}) \nabla^{\E}_{Z}\phi(X) - A(X,\eta_{i}) \nabla^{\E}_{Z}\phi(Y) \}\\
  &\qquad\qquad\qquad - \{ A(Z,\eta_{i})\phi([X,Y]) - A([X,Y],\eta_{i})\phi(Z) \}\\
  &\qquad\qquad\qquad\qquad - \{ A(X,\eta_{i})\phi([Y,Z]) - A([Y,Z],\eta_{i})\phi(X) \}\\
  &\qquad\qquad\qquad\qquad\qquad  - \{ A(Y,\eta_{i})\phi([Z,X]) - A([Z,X],\eta_{i})\phi(Y) \}\\
  &\qquad\qquad\qquad\qquad\qquad\qquad - \{ A(Y,\nabla^{\E}_{Z}\eta_{i})\phi(X) - A(X,\nabla^{\E}_{Z}\eta_{i})\phi(Y) \}\\
  &\qquad\qquad\qquad\qquad\qquad\qquad\qquad - \{ A(Z,\nabla^{\E}_{X}\eta_{i})\phi(Y) - A(Y,\nabla^{\E}_{X}\eta_{i})\phi(Z) \}\\
  &\qquad\qquad\qquad\qquad\qquad\qquad\qquad\qquad - \{ A(X,\nabla^{\E}_{Y}\eta_{i})\phi(Z) - A(Z,\nabla^{\E}_{Y}\eta_{i})\phi(X) \}\\
  &= -\{ XA(Y,\eta_{i}) - YA(X,\eta_{i})- A([X,Y],\eta_{i}) - A(Y,\nabla^{\E}_{X}\eta_{i}) + A(X,\nabla^{\E}_{Y}\eta_{i}) \}\phi(Z)\\
  &\qquad -\{ YA(Z,\eta_{i}) - ZA(Y,\eta_{i})- A([Y,Z],\eta_{i}) - A(Z,\nabla^{\E}_{Y}\eta_{i}) + A(Y,\nabla^{\E}_{Z}\eta_{i}) \}\phi(X)\\
  &\qquad\qquad -\{ ZA(X,\eta_{i}) - XA(Z,\eta_{i})- A([Z,X],\eta_{i}) - A(X,\nabla^{\E}_{Z}\eta_{i}) + A(Z,\nabla^{\E}_{X}\eta_{i}) \}\phi(Y).\\
\end{align*}
For any $X,Y\in\Gamma(TM)$, if $\rank \phi_{p} \geq 3$ at $p\in M$, then there exists $Z_{p}\in T_{p}M$ such that $\phi_{p}(X_{p})$, $\phi_{p}(Y_{p})$, and $\phi_{p}(Z_{p})$ are linearly independent and $\phi_{p}(Z_{p})\neq 0$.
By the above equation, we have \eqref{eq:mainthm1_condition1} at $p$.
Since $\rank \phi \geq 3$ almost everywhere, \eqref{eq:mainthm1_condition1} holds on $M$.
\qed

\vskip\baselineskip
\vskip\baselineskip


\begin{thebibliography}{99999999}

%
\bibitem{Amari}
S. Amari,
{\it Information Geometry and Its Application},
Applied Math. Sci., 194, Springer (2016).
%
\bibitem{Amari-Nagaoka}
S. Amari, H. Nagaoka,
{\it Method of Information Geometry},
A.M.S., Oxford Univ. Press (2000).
%
\bibitem{Belgun}
F. Belgun,
Projective and conformal flatness,
Lecture note at Hamburg University,
https://www.math.uni-hamburg.de/home/belgun/Weyl-proj.pdf.
%
%
%
%
%
\bibitem{Dajczer-Tojeiro}
M. Dajczer, R. Tojeiro,
\textit{Submanifold Theory: Beyond an Introduction},
Springer New York (2019).
%
%
\bibitem{DNV}
F. Dillen, K. Nomizu, L. Vranken,
Conjugate Connections and Radon's Theorem in Affine Differential Geometry,
Monatsh. Math. {\bf 109} (1990), 221--235.
%
\bibitem{Eguchi}
S. Eguchi,
Geometry of minimum contrast,
Hiroshima Math. J. {\bf 22} (1992), 631--647.
%
\bibitem{Eisenhart}
L. P. Eisenhart,
Non-Riemannian Geometry,
Amer. Math. Soc. Colloq. Publ. {\bf 8} (1927).
%
%
\bibitem{Kayo}
K. Kayo,
Statistical manifold with degenerate metric,
Information Geometry {\bf 7} (2024), 229--252.
%
\bibitem{Kobayashi-Nomizu}
S. Kobayashi, K. Nomizu,
\textit{Foundations of Differential Geometry Volume I},
Interscience Tracts in Pure and Applied Math. 15, Interscience Publishers (1963).
%
%
\bibitem{Kurose}
T. Kurose,
On the divergences of 1-conformally flat statistical manifolds,
T\^{o}hoku Math. J., {\bf 46} (1994), 427--433.
%
\bibitem{Lauritzen}
S. L. Lauritzen,
{\em Statistical manifolds}, Differential Geometry in Statistical Inferences,
Institute of Mathematical Statistics Lecture Notes - Monograph Series {\bf 10} (1987), 163--216.
%
\bibitem{LS05}
M.-A. Lawn, L. Sch\"{a}fer,
Decompositions of para-complex vector bundles and para-complex affine immersions,
Result. Math. {\bf 48} (2005), 246--274.
%
\bibitem{Matsuzoe}
H. Matsuzoe,
Statistical manifolds and affine differential geometry,
Advanced Studies in Pure Mathematics {\bf 57} (2010), 303--321.
%
\bibitem{Matumoto}
T. Matumoto,
Any statistical manifold has a contrast function -- On the $C^3$-functions taking the minimum at the diagonal of the product manifold,
Hiroshima Math. J. {\bf 23} (1993), 327--332.
%
\bibitem{Nakajima-Ohmoto}
N. Nakajima, T. Ohmoto,
The dually flat structure for singular models,
Information Geometry. J. {\bf 4} (2021), 31--64.
%
\bibitem{Nomizu-Sasaki}
K. Nomizu, T. Sasaki,
{\it Affine Differential Geometry: Geometry of Affine Immersions},
Cambridge Univ. Press (1994).
%
\bibitem{Radon}
J. Radon,
Die Grundgleichungen der affinen Fl\"{a}chentheorie,
Leipziger Berichte {\bf 70} (1918), 91--107.
%
%
\bibitem{SUY-F1}
K. Saji, M. Umehara, K. Yamada,
Coherent tangent bundles and Gauss-Bonnet formulas for wave fronts,
J. Geom. Anal. {\bf 22} (2012), 383--409.
%
%
\bibitem{Shima}
H. Shima,
\textit{The geometry of Hessian structure},
World Scientific (2007).
%
%
\bibitem{Weyl}
H. Weyl,
Zur Infinitesimalgeometrie: Einordnung der projektiven und der konformen Auffassung,
Göttinger Nachrichten (1921), 99--112. 

\end{thebibliography}
\end{document}